\theoremstyle{definition}
\theoremstyle{remark}
\numberwithin{equation}{section}
\newcommand{\U}{{\mathcal U}}
\newcommand{\0}{{\mathbf 0}}
\newcommand{\C}{{\mathbb C}}
\newcommand{\Z}{{\mathbb Z}}
\newcommand{\R}{{\mathbb R}}
\newcommand{\D}{{\mathbb D}}
\newcommand{\W}{{\mathcal W}}
\newcommand{\hyp}{{\mathbb H}}
\newcommand{\supp}{\operatorname{supp}}
\newcommand{\rank}{\mathop{\rm rank}\nolimits}
\newcommand{\arrow}[1]{\stackrel{#1}{\longrightarrow}}
\newcommand{\Adot}{\mathbf A^\bullet}
\newcommand{\Pdot}{\mathbf P^\bullet}
\newtheorem{defn0}{Definition}[section]
\newtheorem{prop0}[defn0]{Proposition}
\newtheorem{conj0}[defn0]{Conjecture}
\newtheorem{thm0}[defn0]{Theorem}
\newtheorem{lem0}[defn0]{Lemma}
\newtheorem{corollary0}[defn0]{Corollary}
\newtheorem{example0}[defn0]{Example}
\newtheorem{remark0}[defn0]{Remark}
\newtheorem{question0}[defn0]{Question}
\newtheorem{exercise0}[defn0]{Exercise}
\newenvironment{defn}{\begin{defn0}}{\end{defn0}}
\newenvironment{prop}{\begin{prop0}}{\end{prop0}}
\newenvironment{thm}{\begin{thm0}}{\end{thm0}}
\newenvironment{lem}{\begin{lem0}}{\end{lem0}}
\newenvironment{cor}{\begin{corollary0}}{\end{corollary0}}
\newenvironment{exer}{\begin{exercise0}\rm}{\end{exercise0}}
\newenvironment{exm}{\begin{example0}\rm}{\end{example0}}
\newenvironment{rem}{\begin{remark0}\rm}{\end{remark0}}
\newcommand{\thmref}[1]{Theorem~\ref{#1}}
\newcommand{\lemref}[1]{Lemma~\ref{#1}}
\newcommand{\corref}[1]{Corollary~\ref{#1}}
\newcommand{\exref}[1]{Example~\ref{#1}}
\newcommand{\remref}[1]{Remark~\ref{#1}}
\newcommand{\exerref}[1]{Exercise~\ref{#1}}
\newcommand{\mbf}[1]{{\mathbf #1}}
\definecolor{ltblue}{rgb}{0.9,1,1}
\title[Non-isolated Hypersurface Singularities]{Non-isolated Hypersurface Singularities and L\^e Cycles}
\author{David B. Massey}
\date{}
\keywords{complex hypersurface, singularities, Morse theory, L\^e cycle}
\subjclass[2000]{32B15, 32C35, 32C18, 32B10}
\begin{document}

\maketitle

\begin{abstract} In this series of lectures, I will discuss results for complex hypersurfaces with non-isolated singularities. 

In Lecture 1, I will review basic definitions and results on complex hypersurfaces, and then present classical material on the Milnor fiber and  fibration.  In Lecture 2, I will present basic results from Morse theory, and use them to prove some results about complex hypersurfaces, including a proof of L\^e's attaching result for  Milnor fibers of non-isolated hypersurface singularities. This will include defining the relative polar curve. Lecture 3 will begin with a discussion of intersection cycles for proper intersections inside a complex manifold, and then move on to definitions and basic results  on L\^e cycles and L\^e numbers of non-isolated hypersurface singularities. Lecture 4 will explain the topological importance of L\^e cycles and numbers, and then I will explain, informally, the relationship between the L\^e cycles and the complex of sheaves of vanishing cycles.
\end{abstract}




\section{\bf Lecture 1: Topology of Hypersurfaces and the Milnor fibration} Suppose that $\U$ is an open subset of $\C^{n+1}$; we use $(z_0, \dots, z_n)$ for coordinates. 

Consider a complex analytic (i.e., holomorphic) function $f:\U\rightarrow\C$ which is not locally constant. Then, the {\it hypersurface $V(f)$} defined by $f$ is the purely $n$-dimensional complex analytic space defined by the vanishing of $f$, i.e., 
$$V(f) \ := \ \{\mbf x\in\U \ | \ f(\mbf x)=0\}.$$
 To be assured that $V(f)$ is not empty and to have a convenient point in $V(f)$, one frequently assumes that $\0\in V(f)$, i.e., that $f(\0)=0$. This assumption is frequently included in specifying the function, e.g., we frequently write $f:(\U, \0)\rightarrow (\C, 0)$.

\smallskip

Near each point $\mbf x\in V(f)$, we are interested in the local topology of how $V(f)$ is embedded in $\U$. This is question of how to describe the {\it local, ambient topological-type of $V(f)$} at each point.

\smallskip

A {\it critical point of $f$} is a point $\mbf x\in\U$ at which all of the complex partial derivatives of $f$ vanish. The  {\it critical locus of $f$} is the set of critical points of $f$, and is denoted by $\Sigma f$, i.e.,
$$
\Sigma f \ := \ V\left(\frac{\partial f}{\partial z_0}, \frac{\partial f}{\partial z_1}, \dots, \frac{\partial f}{\partial z_n}\right).
$$

The complex analytic Implicit Function Theorem implies that, if $\mbf x\in V(f)$ and $\mbf x\not\in \Sigma f$, then, in an open neighborhood of $\mbf x$, $V(f)$ is a {\bf complex analytic submanifold} of $\U$; thus, we completely understand the ambient topology of $V(f)$ near a non-critical point. However, if $\mbf x\in\Sigma f$, then it is possible that $V(f)$ is not even a topological submanifold of $\U$ near $\mbf x$.

Note that, as sets, $V(f) = V(f^2)$, and that {\bf every} point of $V(f^2)$ is a critical point of $f^2$. In fact, this type of problem occurs near a point $\mbf p$ any time that an irreducible component of $f$ (in its unique factorization in the unique factorization domain $\mathcal O_{\U, \mbf p}$) is raised to a power greater than one. Hence, when considering the topology of $V(f)$ near a point $\mbf p\in V(f)$, it is standard to assume that $f$ is {\it reduced}, i.e., has no such repeated factors. This is equivalent to assuming that $\dim_\mbf p\Sigma f<n$.

\smallskip

Let's look at a simple, but important, example.

\smallskip

\noindent{\rule{1in}{1pt}}

\begin{exm} Consider $f:(\C^2, \0)\rightarrow (\C, 0)$ given by $f(x,y) = y^2-x^3$. 

\smallskip

It is trivial to check that $\Sigma f=\{\0\}$. Thus, at (near) every point of $V(f)$ other than the origin, $V(f)$ is a complex analytic submanifold of $\C^2$.

     \begin{figure}[h]
\begin{center}
\includegraphics[width=2in]{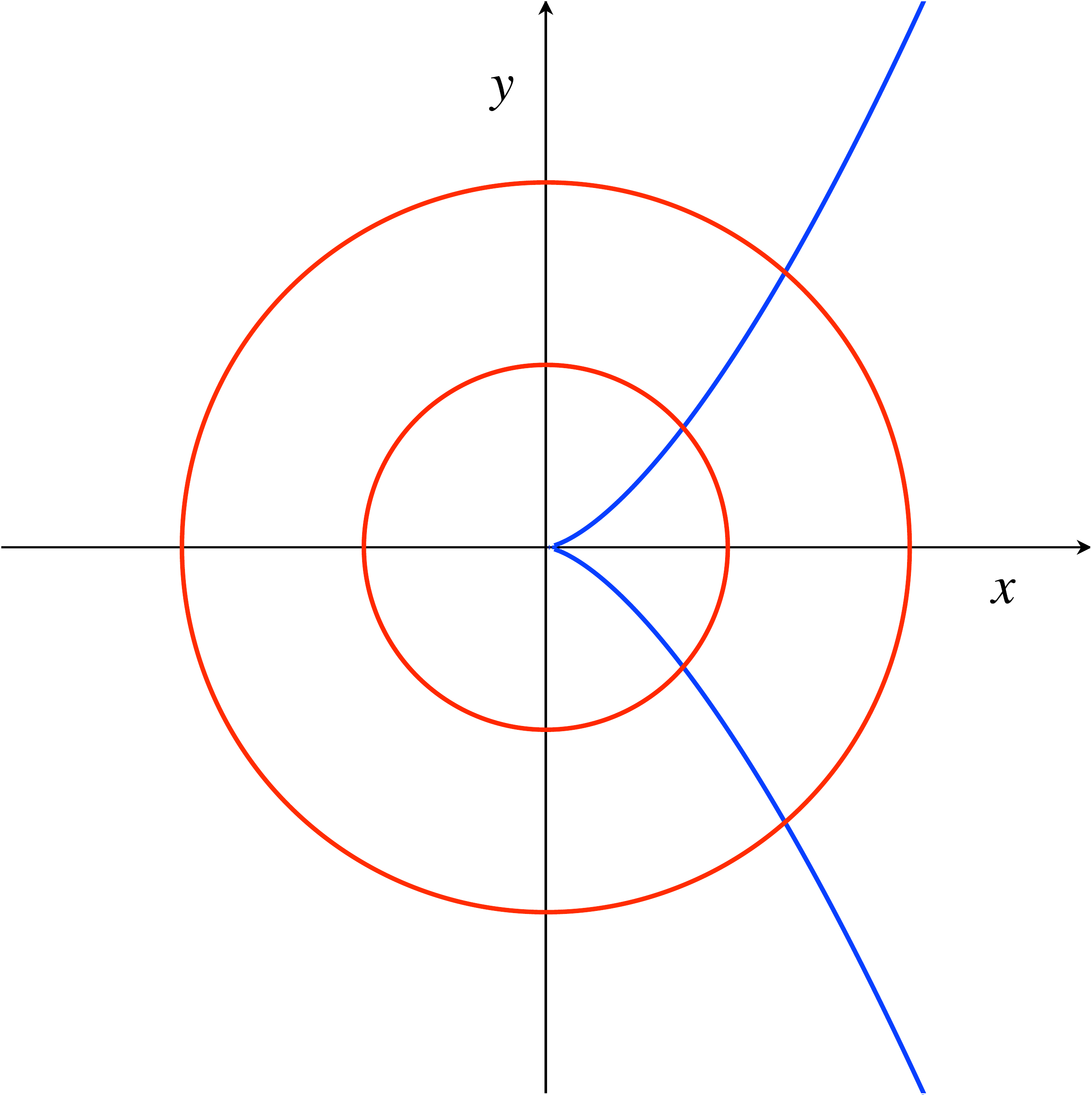}
\captionof{figure}{A cusp, intersected by two ``spheres''.}\label{fig:sphere1}
\end{center}
\end{figure}

In the figure, ignoring for now the two circles, you see the graph of $V(f)$, but drawn over the real numbers. We draw graphs over the real numbers since we can't draw a picture over the complex numbers, but we hope  that the picture over the real numbers gives us some intuition for what happens over the complex numbers.

Note that $f$ has a critical point at the origin, and so the complex analytic Implicit Function Theorem does {\bf not} guarantee that $V(f)$ is a complex submanifold of $\C^2$ near $\0$. If the real picture is not misleading, it appears that $V(f)$ is not even a smooth ($C^\infty$) submanifold of $\C^2$ near $\0$; this is true.

However, the real picture is, in fact, misleading in one important way. Over the real numbers, $V(f)$ is a topological submanifold of $\R^2$ near $\0$, i.e., there exist open neighborhoods $\U$ and $\W$ of the origin in $\R^2$ and a homeomorphism of triples 
$$
(\U, \, \U\cap V(f),\, \0)\cong (\W,\, \W\cap V(x),\, \0).
$$
However, over the complex numbers $V(f)$ is {\bf not}  a topological submanifold of $\C^2$ near $\0$. This takes some work to show.

\smallskip

Why have we drawn the two circles in the figure? Because we want you to observe two things, which correspond to a theorem that we shall state below. First, that the topological-type of the hypersurface seems to stabilize inside open balls of sufficiently small radius, e.g., the hypersurface ``looks'' the same inside the open disk $B^{{}^\circ}_2$ bounded by the bigger circle as it does inside the open disk $B^{{}^\circ}_1$ bounded by the smaller circle; of course, in $\C^2$, ``disk'' becomes ``$4$-dimensional ball'', and ``circle'' becomes ``$3$-dimensional sphere''. Second,  it appears that this ambient topological-type can be obtained by taking the (open) cone on the bounding sphere and its intersection with the hypersurface.
\smallskip

We make all of this precise below.
\end{exm}

\smallskip

\noindent{\rule{1in}{1pt}}

Let us first give rigorous definitions of the {\it local, ambient topological-type of a hypersurface} and of a {\it singular point}.

\begin{defn} Suppose that $\U$ is an open subset of $\C^{n+1}$, and that we have a complex analytic function $f:\U\rightarrow\C$ which is not locally constant. Let $\mbf p\in V(f)$. Then, the {\bf local, ambient topological-type of $V(f)$ at $\mbf p$} is the homeomorphism-type of the germ at $\mbf p$ of the triple $(\U, V(f), \mbf p)$.

In other words, if $g:\W\rightarrow\C$ is another such function, and $\mbf q\in V(g)$, then the local, ambient topological-type of $V(f)$ at $\mbf p$ is the same as that of $V(g)$ at $\mbf q$ if and only if there exist open neighborhoods $\U'$ and $\W'$ of $\mbf p$ and $\mbf q$, respectively, and a homeomorphism of triples
$$
(\U',\, \U'\cap V(f),\, \mbf p) \ \cong \ (\W',\, \W'\cap V(g),\, \mbf q).
$$

\smallskip

The {\bf trivial local, ambient topological-type} is that of $(\C^{n+1},\, V(z_0),\, \0)$. To say that $V(f)$ has the trivial topological-type at a point $\mbf p$ is simply to say that $V(f)$ is a topological submanifold of $\U$ near $\mbf p$.

\smallskip

A point on a hypersurface at which it has the trivial local, ambient topological-type is called a {\bf regular point} of the hypersurface. A non-regular point on a hypersurface is called a {\bf singular point} or a {\bf singularity}. The {\bf set of singular points} of $V(f)$ is denoted by $\Sigma V(f)$.
\end{defn}

\medskip

\begin{rem} You may question our terminology above. Shouldn't ``regular'' and ``singular'' have something to do with smoothness, not just topological data? In fact, it turns out that there is a very strong dichotomy here.

\smallskip

 If $f$ is reduced at $\mbf p$, then, in an open neighborhood of $\mbf p$, $\Sigma f=\Sigma V(f)$. This is not trivial to see, and uses the Curve Selection Lemma (see \lemref{lem:curveselection} in the Appendix) to show that, near a point in $V(f)$, $\Sigma f\subseteq V(f)$, and then uses results on Milnor fibrations. 
 
But, what it implies is that, at a point on a hypersurface, the hypersurface is either an analytic submanifold or is not even a topological submanifold. Therefore, all conceivable notions of ``regular'' and ``singular'' agree for complex hypersurfaces.

This also explains the frequent, mildly bad, habit of using the terms ``critical point of $f$'' and ``singular point of $V(f)$'' interchangeably.
\end{rem}

\medskip

The following theorem can be found in the work of \L ojasiewicz in \cite{lojbooknotes}, and is now a part of the general theory of Whitney stratifications. We state the result for hypersurfaces in affine space, but the general result applies to arbitrary analytic sets. We recall the definition of the cone and the cone on a pair in the Appendix; in particular, recall that the cone on a pair is a triple, which includes the cone point.

\begin{thm} {\rm (\L ojasiewicz, 1965)} Suppose that $\U$ is an open subset of $\C^{n+1}$, and that we have a complex analytic function $f:\U\rightarrow\C$ which is not locally constant. Let $\mbf p\in V(f)$, and for all $\epsilon>0$, let $B_\epsilon(\mbf p)$ and $S_\epsilon(\mbf p)$ denote the closed ball and sphere of radius $\epsilon$, centered at $\mbf p$, in $\C^{n+1}$. Let $B_\epsilon^\circ(\mbf p)$ denote the corresponding open ball.

Then, there exists $\epsilon_0>0$ such that $B_{\epsilon_0}(\mbf p)\subseteq \U$ and such that, if $0<\epsilon\leq\epsilon_0$, then:

\begin{enumerate}
\item $\big(B_\epsilon(\mbf p),\, B_\epsilon(\mbf p)\cap V(f), \mbf p\big)$ is homeomorphic to the triple $c\big(S_\epsilon(\mbf p),\, S_\epsilon(\mbf p)\cap V(f)\big)$ by a homeomorphism which is the ``identity'' on $S_\epsilon(\mbf p)$ when it is identified with $S_\epsilon(\mbf p)\times\{0\}$.

In particular,
$$
\big(B^\circ_\epsilon(\mbf p),\, B^\circ_\epsilon(\mbf p)\cap V(f), \mbf p\big) \ \cong \ c^\circ\big(S_\epsilon(\mbf p),\, S_\epsilon(\mbf p)\cap V(f)\big);
$$
\item the homeomorphism-type of the pair $\big(S_\epsilon(\mbf p),\, S_\epsilon(\mbf p)\cap V(f)\big)$ is independent of the choice of $\epsilon$ (provided $0<\epsilon\leq \epsilon_0$).
\end{enumerate}

Thus, the local, ambient topological-type of $V(f)$ at $\mbf p$ is determined by the homeomorphism-type of the pair $\big(S_\epsilon(\mbf p),\, S_\epsilon(\mbf p)\cap V(f)\big)$, for sufficiently small $\epsilon>0$.
\end{thm}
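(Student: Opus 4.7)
The plan is to prove both assertions together by constructing, for small $\epsilon$, a stratified trivialization of the pair $(\U, V(f))$ in the radial direction $r(\mbf x) := \|\mbf x - \mbf p\|^2$. The ingredients are: a Whitney stratification of a neighborhood of $\mbf p$, transversality of small spheres to that stratification via the Curve Selection Lemma, and an integration step (Thom's first isotopy lemma) turning transversality into a homeomorphism.

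First, I would choose a Whitney stratification $\strat$ of an open neighborhood of $\mbf p$ in $\U$ for which $V(f)$ is a union of strata and $\{\mbf p\}$ is itself a stratum. I claim there is $\epsilon_0 > 0$ such that $r$ is a submersion on each positive-dimensional stratum inside $B_{\epsilon_0}(\mbf p)\setminus\{\mbf p\}$. If not, then for some stratum $S$ with $\mbf p\in\overline S$ the critical locus of $r|_S$ accumulates at $\mbf p$; this locus is semianalytic, so the Curve Selection Lemma (\lemref{lem:curveselection}) produces a real-analytic arc $\gamma:[0,\delta)\to\overline S$ with $\gamma(0)=\mbf p$, $\gamma(t)\in S$ for $t>0$, and $d_{\gamma(t)}(r|_S)=0$ for all $t>0$. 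Then $(r\circ\gamma)'(t) = d_{\gamma(t)}(r|_S)(\gamma'(t)) = 0$, so $r\circ\gamma$ is constant on $(0,\delta)$; but $r(\gamma(t))\to 0$ as $t\to 0^+$ while $r(\gamma(t))>0$ for $t>0$, a contradiction. Consequently $S_\epsilon(\mbf p)$ is transverse to every stratum for every $0<\epsilon\le\epsilon_0$.

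Next, using a partition of unity subordinate to $\strat$, I would assemble a continuous vector field $v$ on $B_{\epsilon_0}(\mbf p)\setminus\{\mbf p\}$ that is smooth on and tangent to each stratum and satisfies $v\cdot\nabla r\equiv 1$; Whitney's condition (b), via Thom's first isotopy lemma, then guarantees that the flow of $v$ is a continuous stratum-preserving map. Flowing $S_{\epsilon}(\mbf p)$ backwards along $-v$ yields, for each $0<\epsilon\le\epsilon_0$, a stratum-preserving homeomorphism
$$
B_\epsilon(\mbf p)\setminus\{\mbf p\}\ \cong\ S_\epsilon(\mbf p)\times(0,1]
$$
which is the identity on $S_\epsilon(\mbf p)\times\{1\}$ and sends $V(f)\cap(B_\epsilon(\mbf p)\setminus\{\mbf p\})$ onto $(V(f)\cap S_\epsilon(\mbf p))\times(0,1]$. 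Since $r\to 0$ along every flow line as one approaches $\mbf p$, sending $\mbf p$ to the cone point extends this to the triple homeomorphism of part (1). Restricting the same flow to the closed shell between $S_{\epsilon_1}(\mbf p)$ and $S_{\epsilon_2}(\mbf p)$, for $\epsilon_1<\epsilon_2\le\epsilon_0$, gives a pair-homeomorphism $\big(S_{\epsilon_1}(\mbf p),\, S_{\epsilon_1}(\mbf p)\cap V(f)\big)\cong\big(S_{\epsilon_2}(\mbf p),\, S_{\epsilon_2}(\mbf p)\cap V(f)\big)$, establishing part (2).

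The main obstacle is the vector-field/isotopy step: writing $v$ via a partition of unity is formal, but proving that its integral flow is an honest continuous (not merely stratumwise-smooth) map on $B_{\epsilon_0}(\mbf p)\setminus\{\mbf p\}$ requires Whitney's (b) condition, or some substitute such as \L ojasiewicz's original L-regular stratification. The Curve Selection transversality argument is short, and once the flow exists the cone structure and the $\epsilon$-independence are essentially automatic.
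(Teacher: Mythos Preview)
The paper does not give its own proof of this theorem: it is stated as a classical result, attributed to \L ojasiewicz and to ``the general theory of Whitney stratifications,'' with no argument supplied. So there is nothing in the paper to compare your proposal against.

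That said, your outline is the standard modern proof and is essentially correct. The Curve Selection argument for transversality of small spheres to the strata is fine as written. Your identification of the real content --- that constructing a stratum-tangent lift of $\partial/\partial r$ whose flow is globally continuous requires Whitney's condition (b) via Thom--Mather controlled vector fields --- is exactly right, and is precisely what the paper alludes to when it says the result ``is now a part of the general theory of Whitney stratifications.'' One small point worth making explicit in a full write-up: extending the homeomorphism continuously over the cone point $\mbf p$ needs the observation that, since $r$ decreases at unit rate along $-v$, every backward flow line reaches $\mbf p$ in finite time, and the collapse map $S_\epsilon(\mbf p)\times(0,1]\to B_\epsilon(\mbf p)\setminus\{\mbf p\}$ extends continuously by sending $S_\epsilon(\mbf p)\times\{0\}$ to $\mbf p$ because $r$ controls the distance to $\mbf p$.
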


\medskip

\begin{defn} The space $S_\epsilon(\mbf p)\cap V(f)$ (or its homeomorphism-type) for sufficiently small $\epsilon>0$ is called the {\bf real link of $V(f)$ at $\mbf p$} and is frequently denoted by $K$.
\end{defn}

\medskip

\begin{rem} The letter $K$ is used because, in the first interesting case, of complex curves in $\C^2$, the real link is a knot (or link) in $S^3$, and how this knot is embedded in $S^3$ completely determines the local, ambient topological-type.
\end{rem}

\medskip

\begin{exer}\label{exer:easy} Consider the following examples:

\begin{enumerate}
\item $f:\C^2\rightarrow \C$ given by $f(x,y)=xy$. Show that $V(f)$ is not a topological manifold at $\0$ (and so, is certainly not a topological submanifold).

\smallskip

\item $f:\C^2\rightarrow\C$ given by $f(x,y)=y^2-x^3$. Show that $V(f)$ is homeomorphic to a disk near $\0$, and so is a topological manifold. Now, parameterize $K=S^3_\epsilon\cap V(f)$ and show that you obtain the trefoil knot in $S^3_\epsilon$. Conclude that $V(f)$ is not a topological submanifold of $\C^2$ near $\0$.

\smallskip

\item $f:\C^3\rightarrow\C$ given by $f(x,y,z)=2xy-z^2$. Show that $K$ is homeomorphic to real projective $3$-space. Conclude that $V(f)$ is not a topological manifold near $\0$. (Hint: Use $x=s^2$, $y=t^2$, and $z=\sqrt{2}st$, and note that a point of $V(f)$ is not represented by a unique choice of $(s,t)$.)
\end{enumerate}
\end{exer}

\smallskip

\noindent\rule{1in}{1pt}

As you can probably tell, the functions used in \exerref{exer:easy} were chosen very specially, and, in general, it is unreasonable to expect to analyze the topology of a hypersurface at a singular point via such concrete unsophisticated techniques.

\smallskip

So...how does one go about understanding how the real link $K$ embeds in a small sphere? 

\smallskip

One large piece of data that one can associate to this situation is the topology of the {\bf complement}. This, of course, is not complete data about the embedding, but it is a significant amount of data.

\smallskip

For ease of notation,  assume that we have a complex analytic function $f:(\U, \0)\rightarrow (\C,0)$ which is not locally constant, and that we wish to understand the local, ambient topology of $V(f)$ at $\0$. We will suppress the references to the center $\mbf p=\0$ in our notation for spheres and balls. So, how do you analyze $S_\epsilon-S_\epsilon\cap V(f) = S_\epsilon-K$ for sufficiently small $\epsilon>0$? 

\smallskip

Milnor gave us many tools in his 1968 book \cite{milnorsing}. He proved that, for sufficiently small $\epsilon>0$, the map 
	$$
	\frac{f}{|f|}:S_\epsilon-K\rightarrow S^1\subseteq\C
	$$
is a smooth, locally trivial fibration, and then proved many results about the fiber. (To review what a smooth, locally trivial fibration is, see the Appendix.)

\smallskip

We will state some of the results of Milnor and others about the above fibration, which is now known as the {\it Milnor fibration}. Below, $\D_\delta$ denotes a disk in $\C$, centered at the origin, of radius $\delta$, and so $\partial \D_\delta$ is its boundary circle.

The following theorem is a combination of Theorem 4.8 and Theorem 5.11 of \cite{milnorsing}, together with Theorem 1.1 of \cite{relmono}.

\begin{thm} (Milnor, 1968 and L\^e, 1976) Suppose that $f:(\U, \0)\rightarrow (\C,0)$ is a complex analytic function. Then, there exists $\epsilon_0>0$ such that, for all $\epsilon$ with $0<\epsilon\leq\epsilon_0$, there exists $\delta_\epsilon>0$, such that, for all $\delta$ with $0<\delta\leq\delta_\epsilon$, the map $f/|f|$ from $S_\epsilon-S_\epsilon\cap V(f)=S_\epsilon-K$ to $S^1$ is a smooth locally trivial fibration.

Furthermore, this smooth locally trivial fibration is diffeomorphic to the restriction  
$$f:B^\circ_\epsilon\cap f^{-1}(\partial \D_\delta)\rightarrow \partial \D_\delta.$$

Finally,  the restriction 
$$f:B_\epsilon\cap f^{-1}(\partial \D_\delta)\rightarrow \partial \D_\delta$$
 (note the closed ball) is a smooth locally trivial fibration, in which the fiber is a smooth manifold with boundary. This fibration is fiber-homotopy-equivalent to the one using the open ball (i.e., is isomorphic up to homotopy).
\end{thm}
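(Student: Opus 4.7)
The plan is to combine two standard strategies: prove the tube form of the fibration directly from Whitney transversality plus Ehresmann's theorem, and then deduce the sphere form by constructing an explicit diffeomorphism between the two models.

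The first step is to fix a Whitney stratification $\mathfrak S$ of the pair $(\U,V(f))$ containing $\{\0\}$ as a point-stratum, and to extract two transversality facts. Using the Curve Selection Lemma (\lemref{lem:curveselection}), I would show that there exists $\epsilon_0>0$ such that every sphere $S_\epsilon$ with $0<\epsilon\leq\epsilon_0$ meets every stratum of $\mathfrak S$ transversely: otherwise, one could construct a real-analytic arc $\gamma(t)\to\0$ lying in a single stratum on which the squared distance $\|\gamma(t)\|^2$ has vanishing derivative while being nonconstant, contradicting real analyticity. Whitney's condition (b) along the stratum $\{\0\}$ then produces, for each such $\epsilon$, a radius $\delta_\epsilon>0$ for which $S_\epsilon$ is transverse to $f^{-1}(c)$ for every $c$ with $0<|c|\leq\delta_\epsilon$.

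With these transversalities in hand, the third (closed-tube) assertion comes for free. The restriction $f:B_\epsilon\cap f^{-1}(\partial\D_\delta)\to\partial\D_\delta$ is proper because $B_\epsilon$ is compact; it is a smooth submersion on the interior because $\Sigma f$ is disjoint from $f^{-1}(c)$ for small $c\neq 0$; and it meets its boundary piece $S_\epsilon\cap f^{-1}(\partial\D_\delta)$ transversely by the second transversality above. Ehresmann's fibration theorem for manifolds with boundary then supplies a smooth locally trivial fibration with compact smooth fiber-with-boundary. Pushing a collar neighborhood inward along a vector field tangent to the fibers of $f$ exhibits the inclusion $B^\circ_\epsilon\cap f^{-1}(\partial\D_\delta)\hookrightarrow B_\epsilon\cap f^{-1}(\partial\D_\delta)$ as a fiber-preserving homotopy equivalence, yielding the final fiber-homotopy-equivalence claim.

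The crux is the diffeomorphism $S_\epsilon-K\cong B^\circ_\epsilon\cap f^{-1}(\partial\D_\delta)$ intertwining $f/|f|$ with $f$, from which the first assertion drops out of the tube statement. The plan is to construct a smooth vector field $v$ on the region $W=\{\mbf x\in B^\circ_\epsilon : 0<|f(\mbf x)|\leq\delta\}$ satisfying (i) $v(|f|)>0$ and (ii) $v(\arg f)=0$, whose integral curves reach $S_\epsilon$ transversely in finite rescaled time. Integrating $v$ outward from the tube $\{|f|=\delta\}$ to $S_\epsilon$ then defines the desired diffeomorphism, and condition (ii) forces each fiber of $f$ to be carried onto a fiber of $f/|f|$. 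The main obstacle will be the construction of $v$: condition (ii) demands tangency to the real codimension-$1$ distribution $\ker d(\arg f)$, which is transverse to the natural $|f|$-increasing direction $\overline{\nabla f}$, so one must project a suitable multiple of $\overline{\nabla f}$ onto $\ker d(\arg f)$ and invoke the transversality of $S_\epsilon$ with the level sets $f^{-1}(c)$ from Step 1 to verify that the projection remains nonzero and that the flow genuinely reaches $S_\epsilon$. A partition-of-unity patching of local choices then preserves (i) and (ii), since both define convex open conditions on tangent vectors.
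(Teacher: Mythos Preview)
The paper does not prove this theorem; it is stated as a classical result, attributed to Milnor's book (Theorems 4.8 and 5.11) and to L\^e. So there is no in-paper proof to compare against, and your outline is essentially a reconstruction of the standard Milnor--L\^e argument. The tube portion (Whitney stratification, Curve Selection for sphere transversality, Ehresmann, collar for the fiber-homotopy statement) is correct and is indeed how this is usually done.

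Two points need repair. First, the transversality of $S_\epsilon$ to the smooth fibers $f^{-1}(c)$ for $0<|c|\leq\delta_\epsilon$ is not a consequence of Whitney's condition (b) along the point-stratum $\{\0\}$: that condition concerns limits of secants and tangent planes at $\0$, whereas the transversality you need is at points of $S_\epsilon\cap V(f)$, which are far from $\0$. What is actually required is Thom's $a_f$ condition along the positive-dimensional strata of $V(f)$ (which holds for Whitney stratifications of hypersurfaces), or Milnor's direct Curve Selection argument that $d(r^2)$ and $d|f|^2$ are nowhere negatively proportional on $\{0<|f|\leq\delta\}\cap B_\epsilon$.

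Second, and more seriously, your vector-field construction as written cannot work. On $W=B^\circ_\epsilon\cap\{0<|f|\leq\delta\}$ you demand $v(|f|)>0$; then forward flow from the tube $\{|f|=\delta\}$ increases $|f|$ and leaves $W$ immediately, never reaching $S_\epsilon$. Moreover, points of $S_\epsilon\setminus K$ have $|f|$ ranging over all small positive values, so no flow preserving $\arg f$ and monotone in $|f|$ can carry the single level $\{|f|=\delta\}$ onto $S_\epsilon\setminus K$. The genuine construction needs $v(\arg f)=0$ together with $v(r^2)>0$ (radial outward), and one must use Milnor's key lemma that $\nabla\log|f|$ and $\nabla r^2$ are never oppositely directed on $W$ to guarantee that along each forward trajectory $|f|$ stays bounded away from $0$, so the trajectory escapes through $S_\epsilon$ rather than through $V(f)$; completeness on the sphere side then yields the diffeomorphism. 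Your projected $\overline{\nabla f}$ idea can be made to do this, but the condition you must impose is on $r^2$, not on $|f|$.
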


\bigskip

\begin{rem}\label{rem:contract} It will be important to us later that Milnor's proof of the above theorem also shows that
$B_\epsilon\cap f^{-1}(\D_\delta)$ is homeomorphic to $B_\epsilon$ and, hence, is contractible. This is sometimes referred to as a {\bf Milnor tube}.
\end{rem}

\bigskip

\begin{defn} Either one of the first two isomorphic fibrations given in the definition above is called the {\bf Milnor fibration} of $f$ at $\0$, and the corresponding fiber is called the {\bf Milnor fiber}.

The third and final fibration from the theorem above is called the {\bf compact Milnor fibration}, and the corresponding fiber is called the {\bf compact Milnor fiber}.

If we are interested in the Milnor fibration and/or Milnor fiber only up to homotopy, then any of the three fibrations and fibers are called the {\bf Milnor fibration} and {\bf Milnor fiber}.
\end{defn}

\bigskip

\begin{rem} As $\U$ is an open subset of $\C^{n+1}$, the Milnor fiber is a complex $n$-manifold, and so is a real $2n$-manifold. The compact Milnor fiber is thus a compact real $2n$-manifold with boundary.

We should also remark that the Milnor fibration exists at each point $\mbf p\in V(f)$; one simply replaces the ball and spheres centered at $\0$ with balls and spheres centered at $\mbf p$.

As a final remark, we should mention that the phrase ``there exists $\epsilon_0>0$ such that, for all $\epsilon$ with $0<\epsilon\leq\epsilon_0$, there exists $\delta_\epsilon>0$, such that, for all $\delta$ with $0<\delta\leq\delta_\epsilon$'' is usually abbreviated by writing simply ``For $0<\delta\ll\epsilon\ll1$''. This is read aloud as  ``for all sufficiently small positive $\epsilon$, for all sufficiently small positive $\delta$ (small compared to the choice of $\epsilon$)''.
\end{rem}

\bigskip

We will now list a number of results on the Milnor fibration and Milnor fiber. Below, we let $\U$ be an open neighborhood of the origin in $\C^{n+1}$, $f:(\U,\0)\rightarrow (\C,0)$ is a complex analytic function, $F_{f, \0}$ denotes the Milnor fiber of $f$ at $\0$, and we let $s:=\dim_\0\Sigma f$.

\begin{enumerate}

\item If $\0\not\in \Sigma f$, then $F_{f,\0}$ is diffeomorphic to a ball and so, in particular, is contractible and has trivial homology (i.e., the homology of a point).

\medskip

\item $F_{f,\0}$ has the homotopy-type of a finite $n$-dimensional CW-complex. In particular, if $k>n$, then the homology $H_k(F_{f,\0}; \Z)=0$, and $H_n(F_{f,\0}; \Z)$ is free Abelian. (See \cite{milnorsing}, Theorem 5.1.)

\medskip

\item $F_{f,\0}$ is $(n-s-1)$-connected. (For $s=0$, see \cite{milnorsing}, Lemma 6.4. For general $s$, see \cite{katomatsu}.)

\medskip

\item Suppose that $s=0$. Then Items 1 and 2 imply that $F_{f,\0}$ has the homotopy-type of the one-point union of a finite collection of $n$-spheres; this is usually referred to as a {\bf bouquet of spheres}. The number of spheres in the bouquet, i.e., the rank of $H_n(F_{f,\0}; \Z)$, is called the {\bf Milnor number of $f$ at $\0$} and is denoted by either $\mu_f(\0)$ or $\mu_\0(f)$.

\medskip

\item The Milnor number of $f$ at an isolated critical point can be calculated algebraically by taking the complex dimension of the {\bf Jacobian algebra}, i.e.,
$$
\mu_f(\0) \ = \ \dim_{\C}\,\frac{\C\{z_0,\dots, z_n\}}{\left\langle\frac{\partial f}{\partial z_0}, \dots, \frac{\partial f}{\partial z_n}\right\rangle},
$$
where $\C\{z_0,\dots, z_n\}$ is the ring of convergent power series at the origin. (This follows at once from \cite{milnorsing}, Theorem 7.2, by using a result of V. Palamodov in \cite{palamodov}.)

In particular, if $s=0$, then $\mu_f(\0)>0$ if and only if $\0\in\Sigma f$.   

\medskip

\item In Lemma 9.4 of \cite{milnorsing}, Milnor proves that, if $f$ is a weighted homogeneous polynomial, then the Milnor fiber of $f$ at $\0$ is diffeomorphic to the global fiber $f^{-1}(1)$ in $\C^{n+1}$.

\medskip

\item If $f: (\U, \0) \rightarrow (\C, 0)$ and $g: (\U', \0) \rightarrow (\C, 0)$ are analytic functions, then the Milnor fibre of the function $h: (\U\times\U', \0) \rightarrow (\C, 0)$ defined by $h(\mathbf w, \mathbf z) := f(\mathbf w) +
g(\mathbf z)$ is homotopy-equivalent to the join (see the Appendix), $F_{f, \0} * F_{g,
\0}$, of the Milnor fibres of $f$ and $g$.  

This determines the homology of $F_{h, \0}$ in a simple way, since the reduced homology of the join of
two spaces $X$ and
$Y$ is given by 
$$\widetilde H_{j+1}(X*Y) = \sum_{k + l = j}\widetilde H_k(X)\otimes \widetilde H_l(Y) \oplus\hskip -0.1in  \sum_{k + l = j-1}\hskip -0.1in\operatorname{Tor}\left(\widetilde H_k(X), \widetilde H_l(Y)\right),$$ 
where all homology groups are with $\Z$ coefficients.

This is the Sebastiani-Thom Theorem, proved in different cases by many people. See, for instance, \cite{sebthom}, \cite{okasebthom}, \cite{sakamoto}, \cite{nemethisebthom1},  \cite{nemethisebthom2}, and \cite{masseysebthom}.

\medskip

\item Let $\U$ and $\W$ be open neighborhoods  of $\0$ in $\C^{n+1}$, let $f:(\U,\0)\rightarrow(\C, 0)$ and $g:(\U, \0)\rightarrow(\C, 0)$ be reduced complex analytic functions which define hypersurfaces with the same ambient topological-type at the origin. Then, there exists a homotopy-equivalence $\alpha:F_{f, \0}\rightarrow F_{g, \0}$ such that the induced isomorphism on homology commutes with the respective Milnor monodromy automorphisms.

In particular, the homotopy-type of the Milnor fiber of a reduced complex analytic function $f$ is an invariant of the local, ambient topological-type of $V(f)$, and so, for hypersurfaces defined by a reduced function with an isolated critical point, the Milnor number is an invariant of the local, ambient topological-type. 

(For $s=0$, this result appears in a remark of Teissier in \cite{teissierdeform} in 1972 and in \cite{teissiercargese} in 1973. The general result, with a monodromy statement, is due to L\^e in  \cite{topsing} and \cite{leattach}, which both appeared in 1973.)

\medskip

\item Suppose that $\0\in\Sigma f$. Let $T^i_{f,\0}: H_i(F_{f, \0};\Z)\rightarrow H_i(F_{f, \0};\Z)$ denote the monodromy automorphism in degree $i$. Then, the Lefschetz number of the monodromy $T^*_{f,\0}$ is zero, i.e.,
$$
\sum_i (-1)^i\operatorname{trace}\left(T^i_{f,\0}\right) \ = \ 0. 
$$
(See \cite{acamp}.)

\medskip

\item The previous item implies that the converse to Item 1 is true. Thus, the Milnor fiber $F_{f, \0}$ has trivial homology (i.e., has the homology of a point) if and only if  $\0\not\in \Sigma f$ (and so, in particular, $V(f)$ is a topological submanifold of affine space at $\0$).

\end{enumerate}

\bigskip

\begin{exer}\label{exer:converse} In some/many cases, the Milnor number can be calculated by hand.

\begin{enumerate}

\item Calculate the Milnor number at $\0$ of $f(x,y)=y^2-x^3$, which defines a cusp.

\medskip

\item Calculate the Milnor number at $\0$ of $f(x,y)=y^2-x^3-x^2$, which defines a node. Conclude that the node and cusp have different ambient topological types.

\medskip

\item Show that $f=y^2-x^5$ and $g=y^3-x^3$ both have Milnor number $4$ at the origin, but do not define hypersurfaces with the same ambient topological-type at the origin (actually, these hypersurfaces do not have the same topological-type at the origin, leaving out the term ``ambient'').

Thus, even for isolated critical points, the converse of Item 8, above, is false.
\end{enumerate}

\end{exer}

\medskip

\begin{exer} In special cases, one can calculate the homology groups of the Milnor fiber of a non-isolated critical point. Consider $f(x,y,t)=y^2-x^3-tx^2$. Show that $\dim_\0\Sigma f=1$ and calculate the homology groups of $F_{f,\0}$. (Hint: Use the Sebastiani-Thom Theorem. Also, use Milnor's result for weighted homogeneous polynomials, and that the homotopy-type of the Milnor fiber is certainly invariant under local analytic coordinate changes.)

The function $f$ can be thought of as a family of hypersurfaces,  parameterized by $t$, where each member of the family has an isolated critical point at the origin; so, this is usually described as a {\bf family of nodes which degenerates to a cusp at $t=0$}.
\end{exer}

\medskip

Despite Item 3 of \exerref{exer:converse}, the stunning conclusion of L\^e and Ramanujam is that the converse of Item 7, above, {\bf is} true in the case of isolated critical points if $f$ and $g$ are in the same analytic family (with one dimension restriction):

\begin{thm} \textnormal{(L\^e-Ramanujam, \cite{leramanujam})}
Suppose $n\neq 2$, and $f$ and $g$ are part of an analytic family of functions with isolated critical points, all of which have the same Milnor number, then $f$ and $g$ define hypersurfaces with the same local, ambient topological-type.
\end{thm}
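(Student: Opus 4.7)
The plan is to reduce the theorem to an application of Smale's $h$-cobordism theorem, exploiting the rigidity that constancy of the Milnor number imposes on an analytic family. Write the family as a single analytic function $F:\U\times T\rightarrow\C$ with $F(\mbf z, t) = f_t(\mbf z)$, where $T$ is a small disk in the parameter space; the hypothesis is that each $f_t$ has an isolated critical point at $\0$ with Milnor number equal to a fixed integer $\mu$. It suffices to produce, for every $t_1$ near $0$, a homeomorphism of triples between $(B_\epsilon, B_\epsilon\cap V(f_0), \0)$ and $(B_\epsilon, B_\epsilon\cap V(f_{t_1}), \0)$, since by \L ojasiewicz's theorem this identifies the local, ambient topological-types.

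First I would set up uniform Milnor data for the family. Using the $\mu$-constant hypothesis together with the upper semi-continuity of the Milnor number, one chooses $0<\delta\ll\epsilon\ll 1$ and a possibly smaller parameter disk $T'\subseteq T$ such that for every $t\in T'$ the Milnor fibration of $f_t$ fits inside $B_\epsilon$ with base $\partial\D_\delta$, and such that no critical point of $f_t$ escapes $B_\epsilon^\circ$ or approaches $S_\epsilon$. Fix a basepoint $\delta_0\in\partial\D_\delta$ and a path $\gamma:[0,1]\rightarrow T'$ from $0$ to $t_1$. The analytic set
\[
W \ := \ \{(\mbf z, s) \in B_\epsilon\times [0,1] \ : \ F(\mbf z, \gamma(s)) = \delta_0\}
\]
is then a compact $(2n+1)$-manifold with corners whose two horizontal ends are precisely the compact Milnor fibers $F_{f_0, \0}$ and $F_{f_{t_1}, \0}$.

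The decisive step is to show that $W$ is an $h$-cobordism. By Items 3 and 4 of the list following the Milnor fibration theorem, each $F_{f_t, \0}$ is a compact, $(n-1)$-connected $2n$-manifold with $H_n\cong\Z^\mu$ as its only nonvanishing reduced homology; in particular, the two ends are abstractly homotopy-equivalent. The nontrivial content is that both inclusions of the ends into $W$ are homotopy equivalences. For $n\geq 2$ the ends are simply connected, so by Whitehead's theorem it suffices to prove that these inclusions induce homology isomorphisms. This is the point where the $\mu$-constant hypothesis is used essentially: it rules out any vanishing cycle being born or dying along $\gamma$, forcing $H_*(W)\cong H_*(F_{f_0, \0})$ via either inclusion. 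I expect this to be the main obstacle, since \exerref{exer:converse}(3) already shows that equal Milnor numbers alone are \emph{not} enough outside an analytic family; the family structure must be exploited via a careful Thom-type argument on a suitable Whitney stratification of $V(F)\cup (\{\0\}\times T')$.

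Once $W$ is known to be an $h$-cobordism and $n\geq 3$, Smale's theorem applies (since $\dim W = 2n+1\geq 7$ and the ends are simply connected) and yields a diffeomorphism $W\cong F_{f_0, \0}\times [0,1]$ rel boundary. A parametrized version of the above construction --- carried out over $\partial\D_\delta\times [0,1]$ in place of $\{\delta_0\}\times [0,1]$ --- upgrades this to a fiber-preserving diffeomorphism between the compact Milnor fibrations of $f_0$ and $f_{t_1}$; filling in using the contractibility of the Milnor tubes (\remref{rem:contract}) produces the desired homeomorphism of germs. The value $n=2$ is excluded precisely because in that case $\dim W = 5$, the dimension where the $h$-cobordism theorem fails; the remaining low-dimensional case $n=1$ of plane curves is handled separately by the classical equisingularity theory of Zariski and L\^e, in which $\mu$-constant families of plane curves are already known to be topologically trivial.
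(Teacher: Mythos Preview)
The paper does not prove this theorem; it is stated with a citation to \cite{leramanujam} and no argument is given. So there is no ``paper's own proof'' to compare against.

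Your outline is essentially the original L\^e--Ramanujam strategy: choose Milnor data uniformly along the family (this uses $\mu$-constancy via conservation of number to prevent critical points from escaping), build a cobordism between compact Milnor fibers over a path in parameter space, verify the $h$-cobordism hypotheses using the $(n-1)$-connectivity of the fibers and $\mu$-constancy, and invoke Smale for $2n+1\geq 6$, which is exactly why $n=2$ is excluded. The separate handling of $n\leq 1$ by plane-curve equisingularity is also what is done classically.

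Two points you should tighten if you write this out in full. First, your $W$ is a cobordism of manifolds \emph{with boundary}: besides the two ``horizontal'' ends there is a ``vertical'' piece coming from $S_\epsilon\times[0,1]$. One must first show that this vertical boundary is already a product (equivalently, that $S_\epsilon$ remains transverse to $V(f_{\gamma(s)})$ for all $s$, so that the links $K_{\gamma(s)}$ form a bundle over $[0,1]$), and then apply the relative $h$-cobordism theorem. Second, the passage from ``Milnor fibrations diffeomorphic'' to ``germs ambient-homeomorphic'' is not just \remref{rem:contract}; one uses that a small ball for $f_0$ also serves as a Milnor ball for $f_{t_1}$ (again a consequence of the transversality along $S_\epsilon$ just mentioned), together with the cone structure. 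These are the genuine technical steps in \cite{leramanujam}, and you have correctly located them as the obstacles.
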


\smallskip

\noindent{\rule{1in}{1pt}}

Thus, for hypersurfaces with isolated singularities, the Milnor number is algebraically calculable, determines the homology of the Milnor fiber, and its constancy in a family (with one dimension restriction) controls the local ambient topology in the family.

We would like similar data for hypersurfaces with non-isolated singularities. The L\^e numbers succeed at generalizing the Milnor number in many ways, but do not yield such strong results. We shall discuss L\^e cycles and L\^e numbers in the third lecture.

\medskip

In the second lecture, we will discuss the basics of Morse Theory, and use it to prove an important result of L\^e from \cite{leattach} on the homology of the Milnor fiber for non-isolated hypersurface singularities.

\bigskip

\section{\bf Lecture 2: Morse Theory, the relative polar curve, and two applications} 

Many of the results in \cite{milnorsing} are proved using {\it Morse Theory}, and so we wish to give a quick introduction to the subject. We will then give some examples of how Morse Theory is used in the study of singular hypersurfaces.

\bigskip

Morse Theory is the study of what happens at the most basic type of critical point of a smooth map. The classic, beautiful  references for Morse Theory are \cite{milnormorse} and \cite{milnorhcobord}. We also recommend the excellent, new introductory treatment in \cite{intromorse}.

\bigskip

In this section, until we explicitly state otherwise, $f:N\rightarrow\R$ will be a smooth function from a smooth manifold of dimension $n$ into $\R$. For all $a\in \R$, let $N_{\leq a}:=f^{-1}((-\infty, a])$. Note that if $a$ is a regular value of $f$, then $N_{\leq a}$ is a {\it smooth manifold with boundary $\partial N_{\leq a}= f^{-1}(a)$\/} (see, for instance, \cite{spivak1}).

\smallskip

The following is essentially Theorem 3.1 of \cite{milnormorse}.

\smallskip

\begin{thm}\label{thm:nocrit} Suppose that $a, b\in\R$ and $a<b$. Suppose that $f^{-1}([a,b])$ is compact and contains no critical points of $f$. 

Then, the restriction $f: f^{-1}([a,b])\rightarrow [a,b]$ is a trivial fibration, and  $N_{\leq a}$ is a deformation retract of $N_{\leq b}$ via a smooth isotopy. In particular, $N_{\leq a}$ is diffeomorphic to $N_{\leq b}$.
\end{thm}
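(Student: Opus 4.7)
The plan is to execute the standard Morse-theoretic argument using a gradient-like vector field whose flow rescales $f$ linearly in time. First, I would fix an auxiliary Riemannian metric $\langle\cdot,\cdot\rangle$ on $N$, giving a gradient $\nabla f$. Because $f^{-1}([a,b])$ is compact and contains no critical points, $\|\nabla f\|^2$ is bounded away from zero on this compact set, so the rescaled vector field
$$
Y \ := \ \frac{\nabla f}{\langle \nabla f, \nabla f\rangle}
$$
is smooth on an open neighborhood $W$ of $f^{-1}([a,b])$ in $N$, and it satisfies the key identity $Y(f)\equiv 1$ there. This identity is exactly what makes the flow line of $Y$ through a point $x$ satisfy $f(\phi_t(x)) = f(x) + t$ for as long as the flow line remains in $W$.

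Next I would produce a globally defined, compactly supported smooth vector field $X$ on $N$ that agrees with $Y$ on $f^{-1}([a,b])$. Choose an open set $V$ with $f^{-1}([a,b])\subset V\Subset W$ and a smooth bump function $\rho:N\to[0,1]$ equal to $1$ on $f^{-1}([a,b])$ and supported in $V$; set $X := \rho\,Y$. Since $\operatorname{supp}X$ is compact, the flow $\phi_t$ of $X$ is defined for all $t\in\R$ and gives a one-parameter group of diffeomorphisms of $N$. On $f^{-1}([a,b])$ one still has $X(f)=1$, so $f\circ\phi_t = f+t$ on any orbit that remains inside $f^{-1}([a,b])$.

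For the trivialization, define
$$
\Phi: f^{-1}(a)\times[a,b] \ \longrightarrow \ f^{-1}([a,b]), \qquad \Phi(x,s) \ := \ \phi_{s-a}(x).
$$
By the identity $f\circ\phi_t = f+t$, the image does lie in $f^{-1}([a,b])$ and $f\circ\Phi(x,s)=s$, so $\Phi$ is a diffeomorphism over $[a,b]$ with smooth inverse $y\mapsto(\phi_{a-f(y)}(y),f(y))$; this verifies that $f:f^{-1}([a,b])\to[a,b]$ is a trivial fibration. For the deformation retract, define $H:N_{\leq b}\times[0,1]\to N_{\leq b}$ by
$$
H(y,\tau) \ := \ \phi_{-\tau\max(0,\,f(y)-a)}(y).
$$
The cutoff $\max(0,f(y)-a)$ makes $H$ equal to the identity on $N_{\leq a}$ for every $\tau$, and at $\tau=1$ it drags each point of $f^{-1}([a,b])$ down along its $X$-orbit to the level set $f^{-1}(a)$. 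This is a (smooth) isotopy through diffeomorphisms of $N_{\leq b}$, and composing with the time-$1$ map gives the required diffeomorphism $N_{\leq b}\cong N_{\leq a}$.

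The main obstacle is smoothness at the transition locus $\{f=a\}$: the factor $\max(0,f(y)-a)$ is only continuous, not smooth, at this level set, so $H$ as written is only a topological isotopy. To upgrade to a smooth isotopy one replaces $\max(0,f(y)-a)$ by a smooth function $\sigma(f(y))$ with $\sigma\equiv 0$ on $(-\infty,a]$, $\sigma(b)=b-a$, and $0\leq\sigma'\leq 1$ on $[a,b]$, or equivalently modifies the cutoff $\rho$ so that $X\cdot f$ transitions smoothly from $1$ on most of $f^{-1}([a,b])$ to $0$ below $a$. After this adjustment the same flow argument produces a genuine smooth isotopy, and the conclusions of the theorem follow. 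Everything else is bookkeeping once the vector field $X$ is correctly chosen.
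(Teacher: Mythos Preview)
The paper does not supply its own proof of this theorem; it simply records it as Theorem~3.1 of Milnor's \emph{Morse Theory}. Your argument is precisely Milnor's gradient-flow argument, so there is nothing to compare: you have reconstructed the proof the paper defers to.

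One imprecision worth flagging: the homotopy $H(y,\tau)=\phi_{-\tau\max(0,f(y)-a)}(y)$ is a perfectly good deformation retraction of $N_{\le b}$ onto $N_{\le a}$, but it is \emph{not} an isotopy through self-diffeomorphisms of $N_{\le b}$, and its time-$1$ map is not injective (each flow segment in $f^{-1}([a,b])$ is collapsed to its intersection with $f^{-1}(a)$), so you cannot read off the diffeomorphism $N_{\le b}\cong N_{\le a}$ from $H$ as you claim. The diffeomorphism comes directly from the flow itself: since $X$ is compactly supported with $0\le X(f)\le 1$ everywhere and $X(f)\equiv 1$ on $f^{-1}([a,b])$, the global diffeomorphism $\phi_{b-a}:N\to N$ carries $N_{\le a}$ onto $N_{\le b}$. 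With that correction, and your smooth replacement of $\max(0,f(y)-a)$ to handle the transition at $f^{-1}(a)$, the proof is complete.
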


\bigskip

Now, let $\mbf p\in N$, and let $(x_1, ..., x_n)$ be a smooth, local coordinate system for $N$ in an open neighborhood of $\mbf p$.

\smallskip

\begin{defn} The point $\mbf p$ is a {\bf non-degenerate critical point of $f$} provided that $\mbf p$ is a critical point of $f$, and that the Hessian matrix $\left(\frac{\partial^2f}{\partial x_i\partial x_j}(\mbf p)\right)_{i, j}$ is non-singular.

The {\bf index of $f$ at a non-degenerate critical point $\mbf p$} is the number of negative eigenvalues of $\left(\frac{\partial^2f}{\partial x_i\partial x_j}(p)\right)_{i, j}$, counted with multiplicity.
\end{defn}

\smallskip

Note that since the Hessian matrix is a real symmetric matrix, it is diagonalizable and, hence, the algebraic and geometric multiplicities of eigenvalues are the same. 

\medskip

\begin{exer}
Prove that $\mbf p$ being a non-degenerate critical point of $f$ is independent of the choice of local coordinates on $N$. 

The index of $f$ at a non-degenerate critical point $\mbf p$ can also can characterized as the index of the bilinear form $B$ defined by the Hessian matrix; this is defined to be the dimension of a maximal subspace on which $B$ is negative-definite. Using this, prove that the index of $f$ at a non-degenerate critical is also independent of the coordinate choice.\end{exer}

\smallskip

The following is Lemma 2.2 of \cite{milnormorse}, which tells us the basic structure of $f$ near a non-degenerate critical point.

\begin{lem}\label{lem:morse}{\rm (The Morse Lemma)} Let $\mbf p$ be a non-degenerate critical point of $f$. Then, there is a local coordinate system $(y_1, \dots, y_n)$ in an open neighborhood $\U$ of $\mbf p$, with $y_i(\mbf p)=0$, for all $i$, and such that, for all  $\mbf x\in\U$,
$$
f(\mbf x)=f(\mbf p)-(y_1(\mbf x))^2-(y_2(\mbf x))^2-\dots-(y_\lambda(\mbf x))^2+(y_{\lambda+1}(\mbf x))^2+\dots+(y_n(\mbf x))^2,
$$
where $\lambda$ is the index of $f$ at $\mbf p$.

In particular, the point $\mbf p$ is an isolated critical point of $f$.
\end{lem}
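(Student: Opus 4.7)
The plan is to reduce $f$ near $\mbf p$ to a quadratic form by an explicit inductive coordinate change --- essentially Lagrange's completion of the square done smoothly --- and then apply Sylvester's law of inertia to identify the number of negative terms with the index. First, translate so that $\mbf p=\0$ and replace $f$ by $f-f(\0)$; it then suffices to produce coordinates realizing the normal form for a smooth $f$ with $f(\0)=0$ and $df(\0)=0$ whose Hessian at $\0$ is non-singular.

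The next step is a Hadamard-type integral representation. Writing $f(x)=\int_0^1 \tfrac{d}{dt}f(tx)\,dt$ produces $f(x)=\sum_i x_i g_i(x)$ with $g_i(\0)=\partial_i f(\0)=0$. Applying the same trick to each $g_i$ and symmetrizing the coefficients yields a smooth representation
$$f(x) \ = \ \sum_{i,j=1}^n x_i x_j\, h_{ij}(x), \qquad h_{ij}=h_{ji},$$
with $h_{ij}(\0)=\tfrac12\,\partial_i\partial_j f(\0)$. Non-degeneracy of the Hessian is exactly the statement that the symmetric matrix $\bigl(h_{ij}(\0)\bigr)$ is invertible.

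Now I induct on the number of variables still to be diagonalized. After a preliminary linear change of coordinates we may assume $h_{11}(\0)\neq 0$; shrinking the neighborhood, $h_{11}(x)$ has constant sign $\epsilon_1=\pm 1$, so $\epsilon_1 h_{11}(x)>0$ and has a smooth positive square root. Setting
$$y_1 \ := \ \sqrt{\epsilon_1 h_{11}(x)}\,\left(x_1+\sum_{j>1}\frac{h_{1j}(x)}{h_{11}(x)}\,x_j\right), \qquad y_j \ := \ x_j \ \text{for } j>1,$$
the Jacobian at $\0$ is upper-triangular with diagonal entry $\sqrt{\epsilon_1 h_{11}(\0)}\neq 0$ in the first slot, so the inverse function theorem produces a local smooth coordinate system. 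Direct expansion gives $h_{11}(x_1+\cdots)^2 = \epsilon_1 y_1^2$ and so
$$f \ = \ \epsilon_1 y_1^2 \ + \ \sum_{i,j>1} y_i y_j\, \tilde h_{ij}(y),$$
where $\bigl(\tilde h_{ij}(\0)\bigr)_{i,j>1}$ is the Schur complement of the $(1,1)$-block of $\bigl(h_{ij}(\0)\bigr)$ and hence is again symmetric and non-singular. The inductive hypothesis applied to this $(n-1)$-variable quadratic-type expression finishes the construction; reordering the coordinates so the $-$ squares appear first and invoking Sylvester's law of inertia identifies the count $\lambda$ of negative squares with the number of negative eigenvalues of the Hessian, i.e., the index.

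The main obstacle is verifying at each inductive step that the proposed change of variables is actually a smooth diffeomorphism and that non-degeneracy is preserved: one needs $h_{11}(\0)\neq 0$ to obtain both the smooth square root and an invertible Jacobian, and one needs the Schur-complement matrix to remain non-singular so the induction can proceed. The concluding statement that $\mbf p$ is an isolated critical point is then immediate from the normal form, since in the $y$-coordinates $df=\sum_i \pm 2 y_i\, dy_i$ vanishes only where every $y_i=0$, i.e., only at $\mbf p$.
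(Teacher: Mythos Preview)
Your argument is correct and is precisely the classical proof given in Milnor's \emph{Morse Theory} (Lemma 2.2), which is exactly what the paper cites; the paper itself does not supply a proof but simply refers to that source. So there is nothing to compare --- you have reproduced the intended argument.
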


\bigskip

The fundamental result of Morse Theory is a description of how $N_{\leq b}$ is obtained from $N_{\leq a}$, where $a<b$, and where $f^{-1}([a, b])$ is compact and contains a single critical point of $f$, and that critical point is contained in $f^{-1}((a, b))$ and is non-degenerate. See \cite{milnormorse}.

Recall that ``attaching a $\lambda$-cell to a space $X$'' means taking a closed ball of dimension $\lambda$, and attaching it to $X$ by identifying points on the boundary of the ball with points in $X$.

\smallskip

\begin{thm} \label{thm:morse} Suppose that $a<b$,  $f^{-1}([a, b])$ is compact and contains exactly one critical point of $f$, and that this critical point is contained in $f^{-1}((a, b))$ and is non-degenerate of index $\lambda$. 

Then, $N_{\leq b}$  has the homotopy-type of $N_{\leq a}$  with a $\lambda$-cell attached, and so $H_i(N_{\leq b}, N_{\leq a};\ \Z)=0$ if $i\neq \lambda$, and $H_\lambda(N_{\leq b}, N_{\leq a};\ \Z)\cong \Z$.
\end{thm}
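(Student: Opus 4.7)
The plan is to combine the Morse Lemma with \thmref{thm:nocrit} (the flow-across-regular-levels result) to reduce to a purely local model around the unique critical point, and then to manufacture an explicit deformation retract exhibiting $N_{\leq b}$ as $N_{\leq a}$ with a $\lambda$-cell glued on. First, set $c:=f(\mbf p)$. Since $f^{-1}([a,b])$ is compact and its only critical point sits in $f^{-1}((a,b))$, for any sufficiently small $\epsilon>0$ the preimages $f^{-1}([a, c-\epsilon])$ and $f^{-1}([c+\epsilon, b])$ are compact and contain no critical values, so \thmref{thm:nocrit} gives diffeomorphisms $N_{\leq a}\cong N_{\leq c-\epsilon}$ and $N_{\leq b}\cong N_{\leq c+\epsilon}$. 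It therefore suffices to prove the claim for $a=c-\epsilon$ and $b=c+\epsilon$.

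Next, I apply \lemref{lem:morse} to obtain coordinates $(y_1,\dots,y_n)$ on an open neighborhood $\U$ of $\mbf p$ with $y_i(\mbf p)=0$ and $f-c=-\|\mbf u\|^2+\|\mbf v\|^2$, where $\mbf u=(y_1,\dots,y_\lambda)$ and $\mbf v=(y_{\lambda+1},\dots,y_n)$. Shrinking $\U$ and $\epsilon$, I may assume $\U$ contains the closed region $\{\|\mbf u\|^2+\|\mbf v\|^2\leq 3\epsilon\}$. The central construction, following Milnor, is to choose a smooth bump $\mu:\R\to\R$ with $\mu(0)>\epsilon$, $\mu(t)=0$ for $t\geq 2\epsilon$, and $-1<\mu'(t)\leq 0$ everywhere, and then to define
\[
F \ :=\ f-\mu\bigl(\|\mbf u\|^2+2\|\mbf v\|^2\bigr)
\]
on $\U$, extended by $F=f$ outside the support of the bump. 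A short calculation confirms that $F\leq f$ with equality outside $\U$, that the critical points of $F$ agree with those of $f$ (the conditions on $\mu'$ force the $\mbf u$- and $\mbf v$-partials to vanish simultaneously only at $\mbf p$), and that $F^{-1}((-\infty,c+\epsilon])=N_{\leq c+\epsilon}$. The key observation is that $F(\mbf p)=c-\mu(0)<c-\epsilon$, so $\mbf p\in F^{-1}((-\infty,c-\epsilon])$; applying \thmref{thm:nocrit} to $F$ on $F^{-1}([c-\epsilon,c+\epsilon])$ (which now contains no critical points of $F$) yields a diffeomorphism between $N_{\leq c+\epsilon}$ and the modified sublevel set $F^{-1}((-\infty,c-\epsilon])$.

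The region $H:=F^{-1}((-\infty,c-\epsilon])\setminus N_{\leq c-\epsilon}$ is the ``handle'': it consists of points in $\U$ where $f>c-\epsilon$ but $F\leq c-\epsilon$, and its closure deformation retracts onto the $\lambda$-disk $e^\lambda:=\{\mbf v=0,\ \|\mbf u\|^2\leq\epsilon\}$ by the obvious retraction $(\mbf u,\mbf v)\mapsto((\mbf u,t\mbf v))$ followed by radial shrinkage in the $\mbf u$-directions where $\|\mbf u\|^2>\epsilon$; the boundary sphere $\partial e^\lambda=\{\mbf v=0,\ \|\mbf u\|^2=\epsilon\}$ lies in $f^{-1}(c-\epsilon)\subseteq N_{\leq c-\epsilon}$, providing the attaching map. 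This realises $N_{\leq c+\epsilon}$ up to homotopy equivalence as $N_{\leq c-\epsilon}\cup_{\partial e^\lambda} e^\lambda$. The homology statement then follows from excision and the cellular pair computation $H_i(D^\lambda,S^{\lambda-1};\Z)$, which is $\Z$ in degree $\lambda$ and $0$ otherwise.

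The main obstacle is the verification that the modified function $F$ behaves as advertised: one must check that $\mu$ can be chosen so that the three properties (no new critical points, $F=f$ outside a compact set, and $F(\mbf p)<c-\epsilon$) hold simultaneously, and that the handle region genuinely deformation retracts onto the core cell with the correct attaching map into $N_{\leq c-\epsilon}$ rather than extending beyond it. Once these local transversality estimates are in place, the global conclusion is a straightforward concatenation of the retractions supplied by \thmref{thm:nocrit} with the local handle retraction.
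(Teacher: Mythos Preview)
Your argument is correct and is precisely the classical proof from Milnor's \emph{Morse Theory} (Theorem~3.2 there): reduce via \thmref{thm:nocrit} to a thin slab $[c-\epsilon,c+\epsilon]$, perturb $f$ by a carefully chosen bump $\mu(\|\mbf u\|^2+2\|\mbf v\|^2)$ to push the critical value below $c-\epsilon$ without creating new critical points, and then exhibit the explicit handle retraction onto $N_{\leq c-\epsilon}\cup e^\lambda$. The paper does not supply its own proof of this statement; it simply records the result and refers the reader to \cite{milnormorse}, so your proposal is exactly the argument the citation points to. One small presentational remark: your description of the retraction of the handle region is slightly compressed compared to Milnor's case-by-case construction (he treats the regions $\|\mbf u\|^2\leq\epsilon$ and $\epsilon\leq\|\mbf u\|^2\leq\|\mbf v\|^2+\epsilon$ separately to ensure continuity at the interface), but the idea you indicate is the right one.
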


Thus, functions $f:N\rightarrow \R$ that have only non-degenerate critical points are of great interest, and so we make a definition.

\begin{defn}\label{def:morse} The smooth function $f:N\rightarrow\R$ is a {\bf Morse function} if and only if all of the critical points of $f$ are non-degenerate.
\end{defn}

\medskip

Definition \ref{def:morse}  would not be terribly useful if there were very few Morse functions. However, there are a number of theorems which tell us that Morse functions are very plentiful. We remind the reader that ``almost all'' means except for a set of measure zero.

\begin{thm}\label{thm:linearmorse} {\rm (\cite{milnorhcobord}, p. 11)} If $g$ is a $C^2$ function from an open subset $\U$ of $\R^n$ to $\R$, then, for almost all linear functions $L:\R^n\rightarrow \R$, the function $g+L:\U\rightarrow\R$ is a Morse function.
\end{thm}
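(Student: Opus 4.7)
The plan is to parameterize linear functions $L:\R^n\to\R$ by their coefficient vector $\mbf a\in\R^n$, writing $L_{\mbf a}(\mbf x)=\langle \mbf a,\mbf x\rangle$, and then to recast ``Morse'' as a condition on $\mbf a$ that reduces to regularity of values of a certain $C^1$ map. The key observation is that the Hessian of $g+L_{\mbf a}$ at any point is just the Hessian of $g$, since $L_{\mbf a}$ is linear; and a point $\mbf x\in\U$ is a critical point of $g+L_{\mbf a}$ precisely when $\nabla g(\mbf x)+\mbf a=\0$, i.e., when $\mbf a=-\nabla g(\mbf x)$. So the problematic (degenerate) critical points of $g+L_{\mbf a}$ are exactly the points $\mbf x\in\U$ where $\nabla g(\mbf x)=-\mbf a$ and $\operatorname{Hess}(g)(\mbf x)$ is singular.

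Next I would introduce the $C^1$ map $F:\U\to\R^n$ given by $F(\mbf x):=-\nabla g(\mbf x)$, whose Jacobian at $\mbf x$ is $-\operatorname{Hess}(g)(\mbf x)$. Under this translation, $\mbf x$ is a critical point of $F$ if and only if $\operatorname{Hess}(g)(\mbf x)$ is singular; and the set of $\mbf a$ for which $g+L_{\mbf a}$ fails to be a Morse function is exactly the set of $\mbf a$ which arise as $F(\mbf x)$ for some critical $\mbf x$ of $F$ — in other words, the set of critical values of $F$.

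The punchline is then Sard's theorem: since $g$ is $C^2$, the map $F$ is $C^1$, and for a $C^1$ map $\R^n\to\R^n$ the set of critical values has Lebesgue measure zero (this is the mild-smoothness case of Sard, where $k\geq n-m+1=1$ is required and is satisfied). Therefore for almost all $\mbf a\in\R^n$, every $\mbf x\in F^{-1}(\mbf a)$ is a regular point of $F$, which is exactly the statement that every critical point of $g+L_{\mbf a}$ is non-degenerate.

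The only potentially delicate point is the invocation of Sard's theorem with minimal smoothness. One must be a bit careful that the $C^1$ version of Sard truly suffices for equidimensional maps; if one wanted to avoid this subtlety entirely, one can instead assume $g\in C^{\infty}$ as in Milnor, or promote $g$ to $C^2$ and note that $F$ is $C^1$ and the classical Sard theorem in the equidimensional case does hold for $C^1$ maps (proved via the inverse function theorem on a covering by small cubes). Everything else in the argument is routine linear algebra and the elementary identification of critical points of $g+L_{\mbf a}$ with the fiber $F^{-1}(\mbf a)$.
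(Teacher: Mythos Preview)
Your argument is correct and is precisely the standard proof: the paper itself gives no proof of this theorem, merely citing \cite{milnorhcobord}, p.~11, and the argument there is exactly the one you wrote --- identify critical points of $g+L_{\mbf a}$ with the fiber $(-\nabla g)^{-1}(\mbf a)$, observe that degeneracy of such a critical point is equivalent to $\mbf x$ being a critical point of $-\nabla g$, and apply Sard's theorem. Your care about the $C^1$ version of Sard in the equidimensional case is appropriate and sufficient.
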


\begin{thm}\label{thm:morseembed}{\rm (\cite{milnormorse}, Theorem 6.6)} Let $M$ be a smooth submanifold of $\R^n$, which is a closed subset of $\R^n$. For all $\mbf p\in\R^n$, let $L_{\mbf p}:M\rightarrow \R$ be given by $L_{\mbf p}(\mbf x):=||\mbf x-\mbf p||^2$. Then, for almost all $\mbf p\in\R^n$, $L_{\mbf p}$ is a proper Morse function such that $M_{\leq a}$ is compact for all $a$.
\end{thm}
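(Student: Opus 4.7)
The plan is to split the statement into its topological content (properness and compactness of $M_{\leq a}$) and its analytic content (being Morse for almost every $\mbf p$), handling the former directly and the latter by applying Sard's theorem to the normal endpoint map of $M$.

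The topological claims are immediate. For any $\mbf p\in\R^n$ and any $a\in\R$, the sublevel set $L_{\mbf p}^{-1}([0,a])$ equals the intersection of $M$ with the closed Euclidean ball of radius $\sqrt{a}$ centered at $\mbf p$; this is a closed subset of a compact set, hence compact. This simultaneously yields properness of $L_{\mbf p}$ and compactness of $M_{\leq a}$ for every $\mbf p$. Next, differentiating $L_{\mbf p}$ along $\mbf v\in T_{\mbf x}M$ gives $2\langle \mbf x-\mbf p,\mbf v\rangle$, so $\mbf x\in M$ is a critical point of $L_{\mbf p}$ exactly when $\mbf p-\mbf x\in (T_{\mbf x}M)^\perp$.

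The Morse condition is the heart of the argument. Setting $k=\dim M$, I would form the total space of the normal bundle
$$\nu M \ := \ \{(\mbf x, \mbf v) \in M \times \R^n \ | \ \mbf v \perp T_{\mbf x}M\},$$
a smooth $n$-dimensional manifold, and define the endpoint map $\Phi:\nu M\to\R^n$ by $\Phi(\mbf x,\mbf v):=\mbf x+\mbf v$. By the critical-point characterization above, the critical points of $L_{\mbf p}$ correspond bijectively with $\Phi^{-1}(\mbf p)$ via $\mbf x\mapsto(\mbf x,\mbf p-\mbf x)$.

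The step I expect to be the main obstacle is the equivalence: $\mbf x$ is a \emph{non-degenerate} critical point of $L_{\mbf p}$ if and only if $(\mbf x,\mbf p-\mbf x)$ is a \emph{regular} point of $\Phi$ (equivalently, since the source and target have equal dimension $n$, $\Phi$ is a local diffeomorphism there). To establish this I would work in a local parametrization $\mbf x(u_1,\dots,u_k)$ of $M$ equipped with a smooth orthonormal normal frame $\mbf n_1,\dots,\mbf n_{n-k}$, giving coordinates $(u,t)$ on $\nu M$ with $\Phi(u,t)=\mbf x(u)+\sum_j t_j \mbf n_j(u)$. A direct computation, using $\langle \mbf x_{u_i},\mbf n_j\rangle\equiv 0$ and differentiating this identity to convert second-derivative terms, shows that at a critical point the Hessian of $L_{\mbf p}$ is (up to a factor of $2$) the matrix of inner products of the first $k$ columns of the Jacobian of $\Phi$ with the $\mbf x_{u_i}$, while the remaining $n-k$ columns of the Jacobian are simply the $\mbf n_j$; hence $d\Phi$ is invertible iff the Hessian is non-degenerate. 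Once this is in place, Sard's theorem applied to $\Phi$ completes the proof: the critical values of $\Phi$ form a set of Lebesgue measure zero in $\R^n$, and for every $\mbf p$ outside this negligible set every point of $\Phi^{-1}(\mbf p)$ is regular, so every critical point of $L_{\mbf p}$ is non-degenerate, i.e.\ $L_{\mbf p}$ is a Morse function.
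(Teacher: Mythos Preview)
The paper does not supply a proof of this theorem; it merely quotes the statement from Milnor's \emph{Morse Theory} (Theorem~6.6) as background. So there is no ``paper's own proof'' to compare against beyond the cited source.

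Your argument is correct and is, in fact, precisely the classical proof from the reference the paper cites. The decomposition into (i) properness/compactness via intersection with closed balls, (ii) identification of critical points of $L_{\mbf p}$ with preimages under the normal endpoint map $\Phi:\nu M\to\R^n$, and (iii) the equivalence ``Hessian of $L_{\mbf p}$ non-degenerate at $\mbf x$ $\Leftrightarrow$ $d\Phi$ invertible at $(\mbf x,\mbf p-\mbf x)$'' followed by Sard's theorem, is exactly Milnor's line of argument. Your sketch of the Hessian computation---differentiating the orthogonality relation $\langle \mbf x_{u_i},\mbf n_j\rangle\equiv 0$ to trade second derivatives of $\mbf x$ for first derivatives of the $\mbf n_j$, and thereby identifying $\tfrac{1}{2}\operatorname{Hess}(L_{\mbf p})$ with the matrix $\big(\langle \mbf x_{u_i},\partial\Phi/\partial u_l\rangle\big)$---is accurate; since the last $n-k$ columns of the Jacobian of $\Phi$ are the orthonormal normal vectors $\mbf n_j$, invertibility of $d\Phi$ reduces to linear independence of the tangential projections of the first $k$ columns, which (up to the invertible first fundamental form) is exactly non-singularity of that Hessian. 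Nothing is missing.
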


\begin{cor}\label{cor:cw} {\rm (\cite{milnormorse}, p. 36)} Every smooth manifold $M$ possesses a Morse function $g:M\rightarrow\R$ such that $M_{\leq a}$ is compact for all $a\in\R$. Given such a function $g$, $M$ has the homotopy-type of a CW-complex  with one cell of dimension $\lambda$ for each critical point of $g$ of index $\lambda$.
\end{cor}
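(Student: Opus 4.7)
By Whitney's embedding theorem, $M$ can be smoothly embedded as a closed submanifold of some $\R^N$. Applying \thmref{thm:morseembed} to this embedding, for almost every $\mbf p\in\R^N$ the squared-distance function $L_{\mbf p}(\mbf x)=\|\mbf x-\mbf p\|^2$ restricts to a proper Morse function on $M$ with $M_{\leq a}$ compact for every $a\in\R$. Fix any such $\mbf p$ and set $g:=L_{\mbf p}$.

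\textbf{Setting up the filtration.} For the second assertion, let $g$ be any Morse function on $M$ with all sublevel sets compact. By \lemref{lem:morse} the critical points of $g$ are isolated, so each compact $M_{\leq a}$ contains only finitely many; consequently the critical values of $g$ form a closed, discrete subset of $\R$. Moreover the descending chain of compacta $M_{\leq a}$ cannot consist of nonempty sets with empty intersection, so $\inf g$ is finite and attained. I will choose a strictly increasing sequence of regular values $a_{-1}<\inf g<a_0<a_1<\cdots$ with $a_k\to\infty$, arranged so that each interval $(a_{k-1},a_k)$ contains at most one critical value of $g$, and construct inductively nested CW-complexes $X_{-1}\subset X_0\subset X_1\subset\cdots$ together with homotopy equivalences $\varphi_k:X_k\to M_{\leq a_k}$ strictly extending $\varphi_{k-1}$, such that $X_k$ is obtained from $X_{k-1}$ by attaching one $\lambda$-cell for each critical point of $g$ in $g^{-1}((a_{k-1},a_k))$ of index $\lambda$.

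\textbf{Induction and passage to the limit.} The base case $X_{-1}=\emptyset=M_{\leq a_{-1}}$ is trivial. For the inductive step, if $(a_{k-1},a_k)$ contains no critical value, then \thmref{thm:nocrit} yields a diffeomorphism $M_{\leq a_k}\cong M_{\leq a_{k-1}}$ and we set $X_k:=X_{k-1}$. Otherwise the interval contains a single critical value shared by finitely many critical points $p_1,\ldots,p_r$ of indices $\lambda_1,\ldots,\lambda_r$; \lemref{lem:morse} provides disjoint standard coordinate balls about the $p_j$, and running the gradient-like deformation of \thmref{thm:nocrit} on the complement while applying the local cell-attachment argument of \thmref{thm:morse} independently at each $p_j$ shows that $M_{\leq a_k}$ is homotopy equivalent to $M_{\leq a_{k-1}}$ with $r$ cells of dimensions $\lambda_1,\ldots,\lambda_r$ attached along disjoint portions of the boundary. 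Transporting the attaching maps to $X_{k-1}$ via a homotopy inverse of $\varphi_{k-1}$ defines $X_k$ and an extension $\varphi_k$. Since $g$ is proper and $a_k\to\infty$, $M=\bigcup_k M_{\leq a_k}$ carries the weak topology, so the compatible $\varphi_k$ assemble into a cellular map $\varphi:X:=\bigcup_k X_k\to M$; as $\varphi$ restricts to a homotopy equivalence on each $X_k$ and every compact subset of $X$ lies in some $X_k$, a mapping-telescope argument together with Whitehead's theorem gives $X\simeq M$.

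\textbf{Main obstacle.} The delicate point is the bookkeeping in the limit: one must arrange that each $\varphi_k$ extends $\varphi_{k-1}$ on the nose rather than merely up to homotopy, so that the $\varphi_k$ glue to a well-defined map on $X$. This is achieved by defining the attaching maps of the new cells of $X_k$ as the compositions of the geometric attaching maps in $M_{\leq a_{k-1}}$ with a fixed homotopy inverse of $\varphi_{k-1}$, and then pushing forward; every other step is a routine application of the three theorems quoted above.
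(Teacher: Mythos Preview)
Your argument is correct and follows exactly the route the paper indicates: the paper gives no proof beyond remarking that the corollary uses Whitney's Embedding Theorem together with Theorem~3.5 of \cite{milnormorse}, and what you have written is precisely a sketch of Milnor's proof of that theorem. One small caution at the very end: invoking Whitehead's theorem presupposes that $M$ already has the homotopy type of a CW-complex, which is part of what you are establishing; the mapping-telescope argument by itself (equivalently, the fact that a colimit of homotopy equivalences along closed cofibrations is a homotopy equivalence) already gives $X\simeq M$, so you can simply omit the appeal to Whitehead.
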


While we stated the above as a corollary to \thmref{thm:morseembed}, it also strongly uses two other results: Theorem 3.5 of \cite{milnormorse} and Whitney's Embedding Theorem, which tells us that any smooth manifold can be smoothly embedded as a closed subset of some Euclidean space.

\smallskip

\noindent{\rule{1in}{1pt}}

We now wish to mention a few complex analytic results which are of importance.

\begin{thm} {\rm (\cite{milnormorse}, p. 39-41)} Suppose that $M$ is an $m$-dimensional complex analytic submanifold of $\C^n$. For all $\mbf p\in\C^n$, let $L_{\mbf p}:M\rightarrow \R$ be given by $L_{\mbf p}(\mbf x):=||\mbf x-\mbf p||^2$. If $\mbf x\in M$ is a non-degenerate critical point of $L_{\mbf p}$, then the index of $L_{\mbf p}$ at $\mbf x$ is less than or equal to $m$.
\end{thm}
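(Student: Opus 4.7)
The plan is to exploit the fact that $L_{\mbf p}$, being $\sum_{j=1}^n |z_j - p_j|^2$ in the ambient complex coordinates on $\C^n$, is a plurisubharmonic function, and that restrictions of plurisubharmonic functions to complex submanifolds remain plurisubharmonic. This will give us enough positivity in the ``Hermitian part'' of the real Hessian to force the index to be at most $m$.

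Concretely, first I would parameterize $M$ locally near $\mbf x$ by a holomorphic immersion $\phi:V\to \C^n$ with $\phi(0)=\mbf x$, where $V$ is an open neighborhood of $0$ in $\C^m$. Setting $g_j(w):=\phi_j(w)-p_j$, we have
$$f(w) \ := \ L_{\mbf p}(\phi(w)) \ = \ \sum_{j=1}^n g_j(w)\,\overline{g_j(w)}.$$
Write real coordinates on $\C^m$ as $w_k = u_k + i v_k$, and denote by $J$ the $\R$-linear endomorphism of $T_0 V \cong \R^{2m}$ given by multiplication by $i$. Using the Wirtinger derivatives and the critical-point condition $\sum_j (\partial g_j/\partial w_k)(0)\,\overline{g_j(0)}=0$, a direct computation shows that the real Hessian quadratic form $Q$ of $f$ at $0$ decomposes, for $\zeta\in\C^m$ identified with its real incarnation in $\R^{2m}$, as
$$Q(\zeta) \ = \ 2\,\mathrm{Re}\!\left(\sum_{k,l} A_{kl}\,\zeta_k\zeta_l\right)\ +\ 2\sum_{k,l} H_{kl}\,\zeta_k\bar\zeta_l,$$
where $A_{kl}=\sum_j (\partial^2 g_j/\partial w_k\partial w_l)(0)\,\overline{g_j(0)}$ (complex symmetric in $k,l$) and $H_{kl}=\sum_j (\partial g_j/\partial w_k)(0)\,\overline{(\partial g_j/\partial w_l)(0)}$ (Hermitian).

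The key observation is that the Hermitian piece $\tilde H(\zeta):=\sum_{k,l}H_{kl}\zeta_k\bar\zeta_l = \sum_j \bigl|\sum_k (\partial g_j/\partial w_k)(0)\,\zeta_k\bigr|^2$ is manifestly positive semi-definite (in fact positive definite, because $\phi$ is an immersion). I would next check that the complex-bilinear piece is $J$-anti-invariant while the Hermitian piece is $J$-invariant, which yields the clean identity
$$Q(\zeta) + Q(J\zeta) \ = \ 4\,\tilde H(\zeta) \ \geq \ 0 \qquad \text{for all } \zeta\in\C^m.$$

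Finally, the index bound drops out of a short linear-algebra argument. Suppose, for contradiction, that there is a real subspace $W\subset T_0 V$ of dimension $\geq m+1$ on which $Q$ is negative definite. Then $JW$ also has real dimension $\geq m+1$, so by the dimension formula $\dim_\R(W\cap JW)\geq 2(m+1)-2m=2$, and in particular there is a nonzero $\xi\in W$ with $J\xi\in W$ as well. But then $Q(\xi)<0$ and $Q(J\xi)<0$, contradicting $Q(\xi)+Q(J\xi)\geq 0$. Hence every negative-definite subspace for $Q$ has real dimension at most $m$, i.e., the index of $L_{\mbf p}$ at $\mbf x$ is at most $m$. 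The main obstacle I anticipate is the bookkeeping in the Hessian computation---keeping the factors of $2$, the signs, and the identification of the $J$-invariant versus $J$-anti-invariant pieces straight---but once that decomposition is in hand, the positivity of $\tilde H$ and the two-line dimension argument finish the proof.
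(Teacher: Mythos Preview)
Your argument is correct. The Hessian decomposition via Wirtinger derivatives, the identity $Q(\zeta)+Q(J\zeta)=4\tilde H(\zeta)\geq 0$, and the dimension count $\dim_\R(W\cap JW)\geq 2$ when $\dim_\R W\geq m+1$ are all valid, and together they give the bound on the index.

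Note, however, that the paper does not supply its own proof of this statement: it is quoted directly from \cite{milnormorse}, pages~39--41, as a background result. So the relevant comparison is with Milnor's original argument, and yours is essentially a repackaging of it. Milnor also chooses local holomorphic coordinates, computes the real Hessian, isolates the $J$-invariant (Hermitian) and $J$-anti-invariant (complex-bilinear) pieces, obtains the same positivity identity, and finishes with the same linear-algebra step that a negative-definite subspace of dimension $>m$ must meet its $J$-image nontrivially. Your framing in terms of plurisubharmonicity and the Levi form $\tilde H$ is a nice way to see \emph{why} the Hermitian piece is nonnegative (it is a sum of squares coming from the Jacobian of an immersion), but the skeleton of the proof is the same as Milnor's.
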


\corref{cor:cw} immediately implies:

\begin{cor}  {\rm (\cite{milnormorse}, Theorem 7.2)} If $M$ is an $m$-dimensional complex analytic submanifold of $\C^n$, which is a closed subset of $\C^n$, then $M$ has the homotopy-type of an $m$-dimensional CW-complex. In particular, $H_i(M;\ \Z) = 0$ for $i>m$.
\end{cor}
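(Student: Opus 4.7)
The plan is to combine the preceding theorem on indices of squared-distance functions restricted to complex submanifolds with Corollary \ref{cor:cw}, which builds a CW structure out of a well-chosen Morse function. So the first thing I would do is identify the correct Morse function to feed into Corollary \ref{cor:cw}.

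Since $M$ is a closed subset of $\C^n$, and $\C^n$ may be regarded as $\R^{2n}$, Theorem \ref{thm:morseembed} applies: for almost every $\mbf p\in\C^n$ the function $L_{\mbf p}:M\to\R$ defined by $L_{\mbf p}(\mbf x):=\|\mbf x-\mbf p\|^2$ is a proper Morse function with $M_{\leq a}$ compact for all $a$. Fix such a $\mbf p$. The crucial input from complex geometry is then the theorem just stated before the corollary: each non-degenerate critical point of $L_{\mbf p}$ on the complex analytic submanifold $M$ has index at most $m$ (where $m$ is the complex dimension, so $M$ has real dimension $2m$, and the bound $\lambda\leq m$ is genuinely a consequence of complex analyticity, not just real manifold theory).

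Now I would simply invoke Corollary \ref{cor:cw}: since $L_{\mbf p}$ is a Morse function on $M$ with all sublevel sets compact, $M$ has the homotopy-type of a CW-complex with one cell of dimension $\lambda$ for each critical point of index $\lambda$. Because every such $\lambda$ is at most $m$, this CW-complex has no cells of dimension greater than $m$, i.e.\ $M$ has the homotopy-type of an $m$-dimensional CW-complex. Finally, the homology of a CW-complex vanishes above its top cell dimension, so $H_i(M;\Z)=0$ for $i>m$, and this finishes the proof.

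The only place where one might worry is the transition between the two theorems, namely whether the generic $\mbf p$ produced by Theorem \ref{thm:morseembed} is compatible with the complex-analytic index bound. But there is nothing to check here: the index bound from the preceding theorem applies to \emph{any} non-degenerate critical point of $L_{\mbf p}$ on a complex submanifold, independent of how $\mbf p$ was chosen. So the argument is essentially a two-line assembly once both cited results are in hand, and I do not foresee a genuine obstacle.
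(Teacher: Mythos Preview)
Your proposal is correct and is essentially the same argument the paper has in mind: the paper simply writes ``\corref{cor:cw} immediately implies'' the result, with the understanding that one combines the preceding index bound for $L_{\mbf p}$ on a complex submanifold with the CW-structure statement of \corref{cor:cw}. You have unpacked this by explicitly invoking \thmref{thm:morseembed} to produce the specific Morse function $L_{\mbf p}$, which is exactly how \corref{cor:cw} is proved in the first place, so there is no substantive difference.
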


Note that this result should {\bf not} be considered obvious; $m$ is the {\bf complex} dimension of $M$. Over the real numbers, $M$ is $2m$-dimensional, and so $m$ is frequently referred to as the {\bf middle dimension}. Thus, the above corollary says that the homology of a complex analytic submanifold of $\C^n$, which is closed in $\C^n$, has trivial homology above the middle dimension.

The reader might hope that the corollary above would allow one to obtain nice results about compact complex manifolds; this is {\bf not} the case. The maximum modulus principle, applied to the coordinate functions on $\C^n$, implies that the only compact, connected, complex submanifold of $\C^n$ is a point.

\bigskip

Suppose now that $M$ is a connected complex $m$-manifold, and that $c:M\rightarrow \C$ is a complex analytic function. Let $\mbf p\in M$, and let $(z_1, ..., z_m)$ be a complex analytic coordinate system for $M$ in an open neighborhood of $\mbf p$.

\smallskip

Analogous to our definition in the smooth case, we have:

\smallskip

\begin{defn} The point $\mbf p$ is a {\bf complex non-degenerate critical point of $c$} provided that $\mbf p$ is a critical point of $c$, and that the Hessian matrix $\left(\frac{\partial^2c}{\partial z_i\partial z_j}(\mbf p)\right)_{i, j}$ is non-singular.
\end{defn}

There is a complex analytic version of the Morse Lemma, \lemref{lem:morse}:

\begin{lem} Let $\mbf p$ be a complex non-degenerate critical point of $c$. Then, there is a local complex analytic coordinate system $(y_1, \dots, y_m)$ in an open neighborhood $\U$ of $\mbf p$, with $y_i(\mbf p)=0$, for all $i$, and such that, for all  $\mbf x\in\U$,
$$
c(\mbf x)=c(\mbf p)+(y_1(\mbf x))^2+(y_2(\mbf x))^2+\dots+(y_m(\mbf x))^2.
$$

In particular, the point $\mbf p$ is an isolated critical point of $c$.
\end{lem}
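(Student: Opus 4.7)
The plan is to mimic the proof of the real Morse Lemma (\lemref{lem:morse}), with the crucial simplification that over $\C$ every non-degenerate symmetric bilinear form is equivalent, after a linear change of coordinates, to the standard sum of squares, so the signs disappear entirely.

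First I would translate coordinates so that $\mbf p = \0$ and $c(\mbf p) = 0$. Next I apply the holomorphic Hadamard lemma twice: since $c(\0) = 0$ we can write $c(z) = \sum_i z_i g_i(z)$ with $g_i$ holomorphic, and since $\mbf p$ is a critical point each $g_i$ vanishes at $\0$, giving $g_i(z) = \sum_j z_j h_{ij}(z)$; after symmetrizing I obtain
$$c(z) \ = \ \sum_{i,j=1}^{m} h_{ij}(z)\, z_i z_j, \qquad h_{ij}=h_{ji}, \qquad h_{ij}(\0)=\tfrac{1}{2}\tfrac{\partial^2 c}{\partial z_i \partial z_j}(\0).$$
By hypothesis the matrix $H(\0):=(h_{ij}(\0))$ is invertible.

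I would then proceed by induction on $r=1,\dots,m+1$, the inductive hypothesis being that there exist local holomorphic coordinates (re-named $z_r,\dots,z_m$ for convenience) such that
$$c \ = \ y_1^2 + \cdots + y_{r-1}^2 + \sum_{i,j \geq r} H_{ij}(z)\, z_i z_j,$$
with $(H_{ij}(\0))_{i,j \geq r}$ symmetric and non-singular. The case $r=1$ is Hadamard's lemma above. For the inductive step, a linear change of the last $m-r+1$ coordinates arranges $H_{rr}(\0) \neq 0$; since $\C$ is algebraically closed I may choose a holomorphic branch of $\sqrt{H_{rr}(z)}$ near $\0$ and set
$$y_r \ := \ \sqrt{H_{rr}(z)}\,\Bigl(z_r + \sum_{j>r}\frac{H_{rj}(z)}{H_{rr}(z)}\, z_j\Bigr).$$
A direct completion-of-the-square yields
$$c - y_1^2 - \cdots - y_r^2 \ = \ \sum_{i,j > r}\Bigl(H_{ij}(z) - \frac{H_{ri}(z) H_{rj}(z)}{H_{rr}(z)}\Bigr)\, z_i z_j.$$
The Jacobian of $(z_1,\dots,z_m)\mapsto(y_1,\dots,y_r,z_{r+1},\dots,z_m)$ at $\0$ is upper-triangular with the new diagonal entry $\sqrt{H_{rr}(\0)} \neq 0$, so this is a valid local coordinate change.

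The main obstacle is the bookkeeping around the new coefficient functions. The quantities $H_{ij}(z) - H_{ri}(z)H_{rj}(z)/H_{rr}(z)$ still depend on all of the original variables, in particular on $z_r$, so to honor the inductive hypothesis one must re-express them as holomorphic functions of the new coordinates $(y_1,\dots,y_r,z_{r+1},\dots,z_m)$; this is immediate because the coordinate change is a local biholomorphism. Finally, the $(m-r)\times(m-r)$ matrix of these new coefficients at $\0$ is exactly the Schur complement of $H_{rr}(\0)$ in the previous $(m-r+1)\times(m-r+1)$ matrix, which is non-singular by the standard linear-algebra identity $\det(H) = H_{rr}\cdot\det(\text{Schur complement})$. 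Iterating $m$ times produces $c = y_1^2 + \cdots + y_m^2$, and the explicit formula shows that $\0$ is an isolated zero of $\nabla c$ in the new coordinates.
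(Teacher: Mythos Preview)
Your argument is correct. It is the standard adaptation of Milnor's proof of the real Morse Lemma (\cite{milnormorse}, Lemma~2.2) to the holomorphic setting: the holomorphic Hadamard lemma, the inductive completion of the square, and the Schur-complement check all go through, and the only change from the real case is that over $\C$ one may always extract a holomorphic square root of a non-vanishing function, so no sign appears.

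There is nothing to compare against here: the paper does not supply its own proof of this lemma. It simply records the statement as ``a complex analytic version of the Morse Lemma, \lemref{lem:morse}'' immediately after quoting the real Morse Lemma from \cite{milnormorse}, and moves on. Your write-up is exactly the argument one is meant to fill in.
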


\medskip

\begin{prop} The map $c$ has a complex non-degenerate critical point at $\mbf p$ if and only if $c-c(\mbf p)$ has an isolated critical point at $\mbf p$ and the Milnor number $\mu_{c-c(\mbf p)}(\mbf p)$ equals $1$.
\end{prop}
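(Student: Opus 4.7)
The plan is to use Item 5 of Lecture 1, which identifies the Milnor number $\mu_{c-c(\mbf p)}(\mbf p)$ at an isolated critical point with the complex dimension of the Jacobian algebra, and then translate both the ``non-degenerate Hessian'' condition and the ``$\mu = 1$'' condition into statements about the Jacobian ideal $J(c) := \langle \partial c/\partial z_1, \dots, \partial c/\partial z_m\rangle$ in the local ring $\C\{z_1, \dots, z_m\}$. After translating in both source and target, I may assume $\mbf p = \0$ in some complex analytic chart $(z_1, \dots, z_m)$ and that $c(\mbf p) = 0$; letting $\mathfrak m := \langle z_1, \dots, z_m\rangle$, the task reduces to showing that the Hessian matrix of $c$ at $\0$ is non-singular if and only if $J(c) = \mathfrak m$.

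For the forward direction I would invoke the complex analytic Morse Lemma stated just above the proposition: in suitable local coordinates $(y_1, \dots, y_m)$ one has $c = y_1^2 + \dots + y_m^2$, so $\partial c/\partial y_i = 2 y_i$, hence $J(c) = \mathfrak m$, the Jacobian algebra is $\C$, and $\mu_c(\0) = 1$; the critical point is manifestly isolated. For the converse, assume $\0$ is an isolated critical point of $c$ with $\mu_c(\0) = 1$. Then $J(c) \subseteq \mathfrak m$ because each partial derivative vanishes at $\0$, and the equality $\dim_\C \C\{z\}/J(c) = 1 = \dim_\C \C\{z\}/\mathfrak m$ forces the inclusion to be an equality $J(c) = \mathfrak m$. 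The images of the $m$ generators $\partial c/\partial z_i$ of $J(c)$ in $\mathfrak m/\mathfrak m^2$ are exactly their linear parts, $\sum_j \frac{\partial^2 c}{\partial z_i \partial z_j}(\0)\, z_j$, which are precisely the rows of the Hessian matrix of $c$ at $\0$.

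The one place where care is needed, and which I view as the only substantive step, is a Nakayama-type argument applied to $\C\{z_1, \dots, z_m\}$: this Noetherian local ring has residue field $\C$, so $m$ elements that generate $\mathfrak m$ must descend to a spanning set of the $m$-dimensional $\C$-vector space $\mathfrak m/\mathfrak m^2$ and hence must form a basis of $\mathfrak m/\mathfrak m^2$. Combined with the identification in the previous paragraph, the linear independence of the rows of the Hessian in $\mathfrak m/\mathfrak m^2$ is exactly non-singularity of the Hessian matrix at $\0$, which completes the backward direction. Everything else is either the complex Morse Lemma or the direct Jacobian-ideal computation, so the main obstacle really is keeping the Nakayama step clean; no analytic tools beyond the Morse Lemma and the Jacobian-algebra description of $\mu$ are required.
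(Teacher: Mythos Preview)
Your argument is correct. The paper does not supply a proof of this proposition; it is simply stated and then the exposition moves on. So there is no ``paper's proof'' to compare against, and your write-up would serve perfectly well as a proof.

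One small remark: the step you flag as ``Nakayama-type'' is in fact the trivial direction. If the $m$ elements $\partial c/\partial z_i$ generate $\mathfrak m$, then of course their images span $\mathfrak m/\mathfrak m^2$; since the latter is $m$-dimensional, the images form a basis, and those images are precisely the rows of the Hessian. Nakayama's lemma proper is the converse implication (a lift of a basis of $\mathfrak m/\mathfrak m^2$ generates $\mathfrak m$), which you do not need here. Otherwise the logic is clean: the forward direction via the complex Morse Lemma gives $J(c)=\mathfrak m$ directly, and for the converse the inclusion $J(c)\subseteq\mathfrak m$ together with $\dim_\C \C\{z\}/J(c)=1$ forces equality, after which the Hessian is read off from $\mathfrak m/\mathfrak m^2$.
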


\medskip

The first statement of the  following theorem is proved in exactly the same manner as \thmref{thm:linearmorse}; one uses the open mapping principle for complex analytic functions to obtain the second statement.

\begin{thm} If $c$ is a complex analytic function from an open subset $\U$ of $\C^m$ to $\C$, then, for almost all complex linear functions $L:\C^m\rightarrow \C$, the function $c+L:\U\rightarrow\C$ has no complex degenerate critical points. 

In addition, for all $\mbf x\in\U$, there exists an open, dense subset $\W$ in  ${\operatorname{Hom}}_{\C}(\C^m, \C)\cong \C^m$ such that, for all $L\in\W$, there exists an open neighborhood $\U^\prime\subseteq\U$ of $\mbf x$ such that $c+L$ has no complex degenerate critical points in $\U^\prime$.
\end{thm}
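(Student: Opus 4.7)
The plan is to reduce both statements to properties of the holomorphic gradient map $\phi:=\nabla c\colon \U\to\C^m$ and of its critical set $C:=\{p\in\U : \det\mathrm{Hess}(c)(p)=0\}$. Identifying $\mathrm{Hom}_\C(\C^m,\C)$ with $\C^m$ via $L_a(z)=\sum_i a_i z_i \leftrightarrow a$, one observes that $p$ is a critical point of $c+L_a$ iff $\phi(p)=-a$, and that $\mathrm{Hess}(c+L_a)=\mathrm{Hess}(c)$ pointwise, since $L_a$ is linear. Consequently, $p$ is a \emph{degenerate} critical point of $c+L_a$ if and only if $\phi(p)=-a$ and $p\in C$; in particular, the set of ``bad'' parameters for the global statement is exactly $-\phi(C)$.

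For the first statement I would apply Sard's theorem to the underlying real $C^\infty$ map of $\phi$: the critical value set $\phi(C)$ has Lebesgue measure zero in $\C^m$. Thus for almost all $a\in\C^m$ one has $-a\notin\phi(C)$, whence $c+L_a$ has no degenerate critical points on $\U$.

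For the second statement, fix $\mbf{x}\in\U$ and choose an open neighborhood $\U'$ of $\mbf{x}$ whose closure $\overline{\U'}$ is compact and contained in $\U$. Set $B:=\phi(\overline{\U'}\cap C)$. Then $B$ is closed in $\C^m$, being the continuous image of a compact set, and has Lebesgue measure zero, being a subset of $\phi(C)$. Since any measure-zero subset of $\C^m$ has empty interior, $B$ is closed and nowhere dense, so $\W:=\C^m\setminus(-B)$ is open and dense. For every $a\in\W$, no point of $\overline{\U'}\cap C$ satisfies $\phi(p)=-a$, and hence $c+L_a$ has no degenerate critical points in $\U'$.

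The main subtle step—where the open mapping principle enters—is the passage from ``measure zero'' to ``nowhere dense'' for the second statement. Above I used a soft measure-theoretic shortcut, but the more structural justification is that $\phi|_C$ is holomorphic, so by the open mapping principle its image is locally contained in a proper complex analytic subvariety of $\C^m$, visibly nowhere dense. This also gracefully handles the case in which $\det\mathrm{Hess}(c)\equiv 0$ on a component of $\U$, so that $C$ contains that component: one then applies the principle to $\phi$ itself, whose rank is everywhere strictly less than $m$, to conclude that $\phi$ of that component lies in a proper analytic subvariety of $\C^m$.
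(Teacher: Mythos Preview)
Your argument is correct. The reduction to the gradient map $\phi=\nabla c$ and the observation that $\mathrm{Hess}(c+L_a)=\mathrm{Hess}(c)$, so that the bad parameter set is exactly $-\phi(C)$, is precisely the setup behind the real version (Theorem~\ref{thm:linearmorse}), and your application of Sard for the first assertion matches the paper's indication that ``the first statement\dots is proved in exactly the same manner as Theorem~\ref{thm:linearmorse}.''

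For the second assertion your route diverges slightly from the paper's hint. The paper invokes the open mapping principle; you instead argue that $B=\phi(\overline{\U'}\cap C)$ is compact (continuous image of a compact set) and of Lebesgue measure zero (subset of the Sard critical-value set), hence closed with empty interior, so its complement is open and dense. This is a clean and entirely valid alternative, arguably more elementary since it reuses the Sard conclusion already in hand and needs nothing specifically complex-analytic. The open-mapping route the paper has in mind presumably proceeds by noting that either $C$ is a proper analytic subvariety of dimension $\leq m-1$ (so its holomorphic image has empty interior), or $\det\mathrm{Hess}(c)\equiv 0$ on a component, in which case $\phi$ has nowhere-maximal rank there and again the image has empty interior. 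Your final paragraph gestures at this, though the phrase ``by the open mapping principle its image is locally contained in a proper complex analytic subvariety'' is not quite a standard formulation and would need a word of justification (e.g., local dimension bounds for images of holomorphic maps, or Remmert's theorem in the proper case). Since your measure-theoretic argument already settles the matter, that paragraph is commentary rather than a load-bearing step.

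One small remark: you choose $\U'$ before $\W$, which yields a conclusion formally stronger than the stated one (a single neighborhood works for all $L\in\W$); this is fine and implies the theorem as written.
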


\bigskip

Finally, we leave the following result as an exercise for the reader. We denote the real and imaginary parts of $c$ by $\operatorname{Re} c$ and  $\operatorname{Im} c$, respectively.

\begin{exer} Show that:
\begin{enumerate} 
\item
$$
\Sigma c \ = \ \Sigma(\operatorname{Re} c) \ = \ \Sigma(\operatorname{Im} c)
$$
and that, if $c(\mbf p)\neq 0$, then $\mbf p\in \Sigma c$ if and only if $\mbf p\in \Sigma\left(|c|^2\right)$.

\medskip

\item Suppose that $\mbf p$ is a complex non-degenerate critical point of $c$. Prove that the real functions $\operatorname{Re} c: M\rightarrow \R$ and $\operatorname{Im} c: M\rightarrow \R$ each have a (real, smooth)  non-degenerate critical point at $\mbf p$ of index precisely equal to $m$, the complex dimension of $M$. 

In addition, if $c(\mbf p)\neq 0$, then prove that the real function $|c|^2: M\rightarrow \R$ also has a non-degenerate critical point of index $m$ at $\mbf p$.

\end{enumerate}

\end{exer}

\smallskip

\noindent{\rule{1in}{1pt}}

Now we want to use Morse Theory to sketch the proofs of two important results: one due to Milnor (see \cite{milnorsing} Theorems 6.5 and 7.2, but our statement and proof are different) and one due to L\^e \cite{leattach}.

\medskip

First, it will be convenient to define the {\bf relative polar curve} of Hamm, L\^e, and Teissier; see  \cite{hammlezariski} and \cite{teissiercargese}. Later, we will give the relative polar curve a cycle structure, but -- for now -- we give the classical definition as a (reduced) analytic set.

Suppose that $\U$ is an open subset of $\C^{n+1}$ and that $f:(\U,\0)\rightarrow (\C, 0)$ is a complex analytic function which is not locally constant. Let $L$ denote a non-zero linear form, and let $\Sigma (f, L)$ denote the critical locus of the map \hbox{$(f,L):\U\rightarrow \C^2$}.

\medskip

\begin{thm}\label{thm:relpolar} \textnormal{(\cite{hammlezariski}, \cite{teissiercargese})} For a generic choice of $L$:

\begin{enumerate}
\item the analytic set 
$$
\Gamma_{f, L} \ := \ \overline{\Sigma (f, L)-\Sigma f}
$$
is purely $1$-dimensional at the origin (this allows for the case where $\0\not\in\Gamma_{f,L}$); 

\bigskip

\item $\dim_\0\Gamma_{f, L}\cap V(L)\leq 0$ and $\dim_\0\Gamma_{f, L}\cap V(f)\leq 0$ (the $<0$ cases allow for $\Gamma_{f,L}=\emptyset$);

\bigskip

\item for each $1$-dimensional irreducible component $C$ of $\Gamma_{f, L}$ which contains the origin, for $\mbf p\in C-\{\0\}$, close enough to the origin, $f_{|_{V(L-L(\mbf p))}}$ has an isolated critical point at $\mbf p$, and 
$$
\mu_{\mbf p}\left(f_{|_{V(L-L(\mbf p))}}\right) =1.
$$
\end{enumerate}

\end{thm}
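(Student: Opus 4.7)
The plan is to build everything around the map
$$
\phi \colon \U \setminus \Sigma f \longrightarrow (\Proj^n)^*, \qquad \phi(\mbf p) = [df(\mbf p)],
$$
which sends a non-critical point to the projective class of its gradient. A direct check shows that a point $\mbf p\notin \Sigma f$ lies in $\Sigma(f,L)$ exactly when $df(\mbf p)$ is a nonzero scalar multiple of $dL$, i.e.\ when $\phi(\mbf p) = [dL]$. Hence $\Sigma(f,L)-\Sigma f = \phi^{-1}([dL])$, and $\Gamma_{f,L}$ is the closure in $\U$ of a fiber of $\phi$. I would then work with the closure $Z\subseteq \U\times(\Proj^n)^*$ of the graph of $\phi$, which is analytic of pure dimension $n+1$, and its second projection $\pi_2\colon Z\to (\Proj^n)^*$; the fibers of $\pi_2$ are (germs of) the various $\Gamma_{f,L}$.

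Part (1) then comes straight from the generic fiber dimension theorem applied to $\pi_2$: either $\pi_2$ is dominant, in which case the generic fiber has dimension $(n{+}1)-n = 1$, or $\pi_2$ is not dominant and the generic $[L]$ lies outside its image so the fiber is empty. Either way, off a proper analytic subset of $(\Proj^n)^*$, the set $\Gamma_{f,L}$ is empty or pure $1$-dimensional, and hence pure $1$-dimensional at $\0$ whenever it meets $\0$.

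For part (2) I would first isolate a useful equivalence: for a $1$-dimensional component $C$ of $\Gamma_{f,L}$ through $\0$, the containment $C\subseteq V(L)$ holds if and only if $C\subseteq V(f)$. Indeed, on the dense open set $C\setminus\Sigma f$ we have $df = \lambda\,dL$ with $\lambda\neq 0$, so $\ker df = \ker dL$; the tangent line to $C$ at smooth points lies in $\ker dL$ iff it lies in $\ker df$, i.e.\ $L|_C$ is locally constant iff $f|_C$ is, both values being $0$ because $\0\in C$. To rule out this bad case generically, I would cut $Z$ by the equation $f(\mbf p)=0$, obtaining $Z'\subseteq Z$ of dimension $\leq n$; by generic fiber dimension, the fiber of $\pi_2|_{Z'}$ over a generic $[L]$ has dimension $\leq 0$. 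This forces $\dim_\0\Gamma_{f,L}\cap V(f)\leq 0$ and, by the equivalence, $\dim_\0\Gamma_{f,L}\cap V(L)\leq 0$.

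For part (3), fix a $1$-dimensional component $C$ of $\Gamma_{f,L}$ through $\0$ and a point $\mbf p\in C-\{\0\}$ close to $\0$. Three sub-claims need checking. (a) $\mbf p$ is critical for $f|_{V(L-L(\mbf p))}$, immediate since $df(\mbf p)\parallel dL$ vanishes on the tangent hyperplane $\ker dL = T_{\mbf p}V(L-L(\mbf p))$. (b) The critical point is isolated: near $\mbf p$ the critical locus of the restriction equals $\Sigma(f,L)\cap V(L-L(\mbf p))$, which since $\mbf p\notin\Sigma f$ and only the single component $C$ passes through $\mbf p$ locally, reduces to $C\cap L^{-1}(L(\mbf p))$; by part (2), $L|_C$ is finite and non-constant, so this intersection is locally just $\{\mbf p\}$. (c) The Milnor number equals $1$, equivalently the restricted Hessian of $f$ on $\ker dL$ is non-degenerate at $\mbf p$; for this I would enlarge $Z$ to an incidence variety $Z''\subseteq Z$ cut out by the vanishing of the appropriate $n\times n$ Hessian minor, show $\dim Z''\leq n$, and conclude via generic fiber dimension that for $[L]$ off a further proper analytic subset, $Z''$ meets $C$ only at finitely many points. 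The main obstacle is exactly this last step: verifying that the degenerate-Hessian condition genuinely cuts the dimension of $Z$, i.e.\ that the relevant minor is not identically zero on $Z$. The cleanest route is to invoke the complex Morse-type theorem stated earlier in the lecture (for generic complex linear $L$, the function $f+L$ has no complex degenerate critical points) after a local analytic normalization, or alternatively to parameterize $C$ by a single variable $t$ and inspect the leading behavior of the restricted Hessian as $t\to 0$.
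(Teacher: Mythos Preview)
The paper does not prove this theorem; it is cited from Hamm--L\^e and Teissier without argument, and the surrounding text only records a related equivalence as an exercise. So there is nothing in the paper to compare against, and I assess your sketch on its own.

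Parts (1) and (2) are fine. The Gauss-type map $\phi$ and the incidence variety $Z$ are exactly the right objects; the generic-fiber-dimension argument, together with cutting by $V(f)$ and your observation that $f|_C$ is constant iff $L|_C$ is constant on a component $C$, handle these cleanly.

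Part (3) has a real gap. You correctly identify that $\mu_{\mbf p}=1$ amounts to non-degeneracy of the $n\times n$ restricted Hessian, and you propose cutting $Z$ by this condition to obtain $Z''$. The problem is your justification that $Z''\subsetneq Z$. The complex Morse theorem for $f+L$ controls the \emph{full} $(n{+}1)\times(n{+}1)$ Hessian of $f$, not its restriction to $\ker dL$, and non-degeneracy of the former does not imply non-degeneracy of the latter (take the Hessian $\left(\begin{smallmatrix}0&1\\1&0\end{smallmatrix}\right)$ restricted to a coordinate line). Indeed $Z''=Z$ can occur: for $f=z_0+z_1^2$ on $\C^3$ the restricted Hessian is degenerate at every point, though in that example $\pi_2$ fails to be dominant and the conclusion is vacuous. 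Your ``parameterize $C$'' suggestion is circular, since $C$ already depends on the choice of $L$.

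A cleaner route avoids $Z''$ altogether. Decompose $\mu_{\mbf p}=1$ into two conditions: the scheme $V\big(\partial f/\partial z_1,\ldots,\partial f/\partial z_n\big)$ is smooth at $\mbf p$, and the hyperplane $V(L-L(\mbf p))$ meets it transversely there. For the first, apply generic smoothness (complex-analytic Sard) to $\phi$ itself: for generic $[L]$ the fiber $\phi^{-1}([L])$ is a smooth curve, so the scheme is smooth along $C\setminus\Sigma f$. For the second, transversality at $\mbf p$ is exactly ``$L|_C$ unramified at $\mbf p$''; since $L|_C$ is non-constant by part (2), its ramification locus is a discrete analytic subset of $C$, and shrinking the neighborhood of $\0$ excludes all such points except possibly $\0$ itself. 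If you prefer to salvage the $Z''$ argument: check that degeneracy of the restricted Hessian at $\mbf p$ is precisely the condition that the map $(\phi,f):\U\setminus\Sigma f\to(\Proj^n)^*\times\C$ is critical at $\mbf p$; then $Z''=Z$ would force the image of $(\phi,f)$ to have dimension at most $n$, making $f$ locally constant on generic fibers of $\phi$, which contradicts part (2).
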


\medskip

\begin{exer}\label{exer:polarcurve} Show that $\dim_\0\Gamma_{f, L}\cap V(f)\leq 0$ if and only if $\dim_\0\Gamma_{f, L}\cap V(L)\leq 0$, and that these equivalent conditions imply that $\Gamma_{f, L}$ is purely $1$-dimensional at $\0$. (Hint: Give yourself a coordinate system on $\U$ that has $L$ as one of its coordinates, and parameterize the components of $\Gamma_{f, L}$.)
\end{exer}

\medskip

\begin{rem} Suppose that we choose (re-choose) our coordinate system $(z_0, \dots, z_n)$ for $\C^{n+1}$ so that $z_0=L$. Then, we may consider the scheme
$$
V\left(\frac{\partial f}{\partial z_1}, \dots, \frac{\partial f}{\partial z_n}\right).
$$
Then $\Gamma_{f, L}$ consists of those irreducible components of this scheme which are {\bf not} contained in $\Sigma f$. The condition that $\mu_{\mbf p}\left(f_{|_{V(L-L(\mbf p))}}\right) =1$ in Item 3 of \thmref{thm:relpolar} is equivalent to saying that these irreducible components of the scheme are reduced at points other than the origin.

\medskip

\end{rem}

\bigskip

In light of \thmref{thm:relpolar} and \exerref{exer:polarcurve}, we make the following definition.

\begin{defn} If $L$ is generic enough so that $\Gamma_{f, L}$ is purely $1$-dimensional at $\0$, then we refer to $\Gamma_{f, L}$ as the {\bf relative polar curve} (of $f$ with respect to $L$ at the origin), and denote it by $\Gamma_{f, L}^1$ (note the superscript by the dimension). In this case, we say that {\bf the relative polar curve exists} or, simply, that $\Gamma_{f, L}^1$ exists.

If $\dim_\0\Gamma_{f, L}\cap V(f)\leq 0$ (so that, in particular, $\Gamma_{f, L}^1$ exists), then we say that $V(L)$ (or $L$ itself)  is a {\bf Thom slice} (for $f$ at $\0$).

If Item 3 of \thmref{thm:relpolar} holds, then we say that the relative polar curve is {\bf reduced}.
\end{defn}

\medskip

\begin{exer} Suppose that $\dim_\0\Sigma f=0$. Conclude that
$$
V\left(\frac{\partial f}{\partial z_1}, \dots, \frac{\partial f}{\partial z_n}\right)
$$
is a purely $1$-dimensional local complete intersection, equal to $\Gamma_{f, z_0}^1$ and that $$\dim_\0 \Gamma_{f, z_0}^1\cap V\left(\frac{\partial f}{\partial z_0}\right)=0.$$

If you are familiar with intersection numbers, conclude also that
$$
\mu_f(\0) \ = \ \dim_{\C}\,\frac{\C\{z_0,\dots, z_n\}}{\left\langle\frac{\partial f}{\partial z_0}, \dots, \frac{\partial f}{\partial z_n}\right\rangle} \ = \ \left(\Gamma_{f, z_0}^1\cdot V\left(\frac{\partial f}{\partial z_0}\right)\right)_\0.
$$
\end{exer}

\bigskip

Now we're ready to prove, modulo many technical details, two important results on Milnor fibers.

\medskip

First, the classic result of Milnor:

\begin{thm} \textnormal{(\cite{milnorsing}, Theorem 6.5 and 7.2)} Suppose that $\dim_\0\Sigma f=0$.  Then, the Milnor fiber $F_{f,\0}$ is homotopy-equivalent to a bouquet of $n$-spheres, and the number of spheres in the bouquet is 
$$
\dim_{\C}\,\frac{\C\{z_0,\dots, z_n\}}{\left\langle\frac{\partial f}{\partial z_0}, \dots, \frac{\partial f}{\partial z_n}\right\rangle} \ = \ \left(\Gamma_{f, z_0}^1\cdot V\left(\frac{\partial f}{\partial z_0}\right)\right)_\0.
$$
\end{thm}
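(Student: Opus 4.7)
First, I would reduce the bouquet statement to a single counting problem. By items (2) and (3) in the list of Milnor-fiber facts, $F := F_{f, \0}$ is an $(n-1)$-connected finite CW complex of dimension $n$ (using $s = 0$). Any such space is homotopy-equivalent to a bouquet of $n$-spheres: by Hurewicz, $\pi_n(F) \cong H_n(F; \Z)$ is free abelian of some rank $N$; choosing generators gives a map $\bigvee^N S^n \to F$ which induces an isomorphism on integral homology in every degree and is therefore, by Whitehead applied to simply connected spaces, a homotopy equivalence (the exceptional case $n=1$ is just the fact that a connected finite graph is a wedge of circles). So the theorem is proved once we verify that $N = (\Gamma^1_{f, z_0} \cdot V(\partial f/\partial z_0))_\0$; the equality with the Jacobian algebra dimension is the content of the preceding exercise.

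To extract this count, I would perturb $f$ generically. After a linear change of coordinates I may assume $z_0$ is a generic linear form in the sense of the polar curve theorem, so $\Gamma := \Gamma^1_{f, z_0}$ exists and is reduced. For small generic $t \in \C^*$ let $f_t := f - t z_0$; its critical locus is
$$
\Sigma f_t \;=\; V\!\left(\tfrac{\partial f}{\partial z_0}-t,\,\tfrac{\partial f}{\partial z_1},\,\ldots,\,\tfrac{\partial f}{\partial z_n}\right) \;=\; \Gamma \cap V\!\left(\tfrac{\partial f}{\partial z_0}-t\right),
$$
which is a finite set $\{p_1, \ldots, p_N\} \subset B_\epsilon$ of $N = (\Gamma \cdot V(\partial f/\partial z_0))_\0$ complex non-degenerate critical points of $f_t$ of local Milnor number one (by the reducedness clause of the polar-curve theorem). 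By the real/complex Morse-index exercise, $\operatorname{Re}(f_t)$ has a real, smooth, non-degenerate critical point of index exactly $n$ at each $p_i$.

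Finally I would relate $F$ back to this perturbation and apply Morse theory. The plan is to show that, for $|t|$ small and $c$ a suitable regular value of $f_t$ in the Milnor disk $\D_\delta$, the total Milnor fiber of the perturbation $\tilde F_t := B_\epsilon \cap f_t^{-1}(c)$ is diffeomorphic to $F$. This is obtained by viewing the two-parameter family $(z,s) \mapsto f(z) - s z_0$ over a small bidisk, applying Ehresmann's fibration theorem inside the contractible Milnor tube (\remref{rem:contract}), and checking that $S_\epsilon$ remains transverse to all fibres $f_s^{-1}(c)$ for small $|s|,|c|$. Then, choosing $c$ just beyond (in modulus) every critical value of $f_t$, a standard gradient-flow/handlebody argument on $\operatorname{Re}(f_t)$ deformation-retracts $\tilde F_t$ onto the wedge of the local vanishing cycles at the $p_i$, each of which is a single $n$-sphere (being the Milnor fiber of the quadratic Morse germ). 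Combining with the connectivity of $F$, this yields $F \simeq \bigvee_{i=1}^N S^n$, as required.

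The main obstacle in this plan is the diffeomorphism $F \cong \tilde F_t$, i.e.\ the invariance of the Milnor fiber's homotopy type under the one-parameter degeneration $t \to 0$: concretely, one must construct a controlled vector field on the total space of the family that trivializes it while preserving transversality with $S_\epsilon$, exactly the sort of controlled-stratification argument that already underlies the existence of the Milnor fibration. Everything else (the polar-curve theorem, the complex Morse lemma and its real-index calculation, the CW model and connectivity of $F$) is either in place or genuinely routine.
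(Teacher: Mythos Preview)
Your overall strategy---perturb $f$ to $f_t=f-tz_0$ so that the critical locus splits into $N=(\Gamma^1_{f,z_0}\cdot V(\partial f/\partial z_0))_{\0}$ complex Morse points, and then identify $F_{f,\0}$ with a regular fiber of $f_t$---is the same as the paper's. The organization differs: you front-load the bouquet structure via the connectivity facts (items (2) and (3)) and then try only to count, whereas the paper runs a single Morse argument on $r=|f-tz_0-a|^2$ over the ball $B_\epsilon$, using that the Milnor tube is \emph{contractible} (\remref{rem:contract}), to obtain the bouquet structure and the count simultaneously: $r^{-1}[0,\delta]$ is contractible, $r^{-1}[0,\eta]\simeq F_{f,\0}$, and passing each complex Morse point attaches an $(n{+}1)$-cell. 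This buys the paper independence from item (3), which for $s=0$ is itself proved by exactly this argument in \cite{milnorsing}; your appeal to it is permissible in these notes but mildly circular as a proof of Milnor's theorem.

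There is a genuine gap in your step 4. The sentence ``a gradient-flow/handlebody argument on $\operatorname{Re}(f_t)$ deformation-retracts $\tilde F_t$ onto the wedge of the local vanishing cycles'' does not make sense as stated: $\operatorname{Re}(f_t)$ is constant on $\tilde F_t=f_t^{-1}(c)$, so there is no Morse theory to do there, and over the punctured disk of regular values all fibers $f_t^{-1}(c)\cap B_\epsilon$ are already diffeomorphic---nothing collapses as you vary $c$. What Morse theory on $\operatorname{Re}(f_t)$ (or, as in the paper, on $|f_t-a|^2$) actually gives is a statement about the \emph{tube}: the contractible set $B_\epsilon\cap f_t^{-1}(\D_\delta)$ is built from a single regular fiber by attaching $N$ cells of dimension $n{+}1$. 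It is the contractibility of the tube that converts this into the bouquet statement (equivalently, into the count $b_n(\tilde F_t)=N$), and you never invoke it in step 4. Once you insert that ingredient, your argument coincides with the paper's; your identification of the invariance $F\cong\tilde F_t$ as the chief technical point is correct, and the paper likewise only sketches it.
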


\begin{proof} We sketch a proof.

\medskip

Recall, from \remref{rem:contract}, that the Milnor tube $T:=B_\epsilon \cap f^{-1}(\D_\delta)$, for $0<\delta\ll\epsilon\ll 1$, is contractible. Select a complex number $a$ such that $0<|a|\ll\delta$. We wish to show that $T$ is obtained from $F_{f,\0}=T\cap f^{-1}(a)$, up to homotopy, by attaching $\left(\Gamma_{f, z_0}^1\cdot V\left(\frac{\partial f}{\partial z_0}\right)\right)_\0$ $(n+1)$-cells.

The number $a$ is a regular value of $f$ restricted to the compact manifold with boundary $B_\epsilon$, i.e., a regular value when restricted to the open ball and when restricted to the bounding sphere. Consequently, for $0<\eta\ll |a|$, the closed disk, $\D_\eta(a)$, of radius $\eta$, centered at $a$, consists of regular values, and so the restriction of $f$ to a map from $T\cap f^{-1}(\D_\eta(a))$ to $\D_\eta(a)$ is a trivial fibration; in particular, $T\cap f^{-1}(\D_\eta(a))$ is homotopy-equivalent to $F_{f,\0}$. Furthermore, $T$ is diffeomorphic to $T':=B_\epsilon \cap f^{-1}(\D_\delta(a))$ and, hence, $T'$ is contractible. 

\smallskip

We wish to apply Morse Theory to $|f-a|^2$ as its value grows from $\eta$ to $\delta$. However, there is no reason for the critical points of $f-a$ to be complex non-degenerate. Thus, we assume that the coordinate $z_0$ is chosen to be a generic linear form and, for $0< |t|\ll \eta$, we consider the map $r:=|f-tz_0-a|^2$ as a map from $B_\epsilon$ to $\R$.

Then, $r^{-1}[0,\eta]$ is homotopy-equivalent to $F_{f,0}$, $T'':=r^{-1}[0, \delta]$ is contractible, $r$ has no critical points on $S_\epsilon$, and all of the critical points of $f-tz_0-a$ in $B_\epsilon^\circ$ are complex non-degenerate. Consequently, complex Morse Theory tells us that the contractible set $T''$ is constructed by attaching $(n+1)$-cells to $F_{f, \0}$. Hence, $F_{f,0}$ has the homotopy-type of a finite bouquet of $n$-spheres.

How many $n$-spheres are there? One for each critical point of $f-tz_0-a$ in $B_\epsilon^\circ$. Therefore, the number of $n$-spheres in the homotopy-type is the number of points in
$$
B_\epsilon^\circ\cap V\left(\frac{\partial f}{\partial z_0}-t, \frac{\partial f}{\partial z_1}, \dots, \frac{\partial f}{\partial z_n}\right) \ = \ B_\epsilon^\circ\cap \Gamma_{f, z_0}^1\cap V\left(\frac{\partial f}{\partial z_0}-t\right),
$$
where $0< |t|\ll \epsilon\ll 1$.
This is precisely the intersection number
$$\left(\Gamma_{f, z_0}^1\cdot V\left(\frac{\partial f}{\partial z_0}\right)\right)_\0.$$
\end{proof}

\bigskip

Now we wish to sketch the proof of L\^e's main result of \cite{leattach} for hypersurface singularities of arbitrary dimension. 

We continue to assume that $\U$ is an open subset of $\C^{n+1}$ and that $f:(\U,\0)\rightarrow (\C, 0)$ is a complex analytic function which is not locally constant. In order to appreciate the inductive applications of the theorem, one should note that, if $s:=\dim_0\Sigma f\geq 1$, then, for generic $z_0$, $\dim_\0\Sigma(f_{|_{V(z_0)}})=s-1$.

\begin{thm}\label{thm:leattach}\textnormal{(L\^e, \cite{leattach})} Suppose that $\dim_0\Sigma f$ is arbitrary, Then, for a generic non-zero linear form $z_0$, $F_{f, \0}$ is obtained up to homotopy from $F_{f_{|_{V(z_0)}}, \0}$ by attaching $\left(\Gamma_{f, z_0}^1\cdot V(f)\right)_\0$ $n$-cells.
\end{thm}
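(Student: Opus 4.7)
The strategy I would pursue mirrors the Morse-theoretic proof of Milnor's theorem given just above, but now I would apply Morse theory to the real-valued function $r := |z_0|^2$ directly on the Milnor fiber $\mf$ itself, using $F_{f_{|_{V(z_0)}}, \0} = \mf \cap V(z_0)$ as the ``bottom'' of the filtration instead of a point. First, choose $z_0$ generic enough that the relative polar curve $\Gamma := \Gamma_{f,z_0}^1$ exists and is reduced, that $\dim_\0 \Gamma \cap V(f) \leq 0$, and that $V(z_0)$ is transverse to the Milnor sphere $S_\epsilon$. Then for $0 < |a| \ll \epsilon \ll 1$, the Milnor fiber $\mf = B_\epsilon \cap f^{-1}(a)$ is a compact complex $n$-manifold with boundary, and by genericity $\mf \cap V(z_0) = (B_\epsilon \cap V(z_0)) \cap (f_{|_{V(z_0)}})^{-1}(a)$ is precisely the Milnor fiber $F_{f_{|_{V(z_0)}}, \0}$; moreover $V(z_0)$ meets $\mf$ transversely, since no point of $\mf \cap V(z_0)$ lies on $\Gamma$ when $\epsilon$ is small (because $\Gamma \cap V(z_0) \subseteq \{\0\}$ locally and $\0 \notin \mf$).

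Next, I would analyze the holomorphic map $z_0|_\mf : \mf \to \C$. A point $p \in \mf$ is a critical point of this map iff $dz_0$ vanishes on $T_p\mf = \ker df_p$, iff $df_p$ is a scalar multiple of $dz_0$, iff $p \in \Gamma$. Thus the critical set of $z_0|_\mf$ is $\mf \cap \Gamma = B_\epsilon \cap \Gamma \cap f^{-1}(a)$. Since $\Gamma \cap V(f) = \{\0\}$ locally and $\Gamma$ is purely $1$-dimensional, the map $f|_\Gamma$ is finite near $\0$, and conservation of number along it shows that for small $|a|$ the cardinality of this critical set is the intersection number $(\Gamma_{f,z_0}^1 \cdot V(f))_\0$. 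Reducedness of $\Gamma$ combined with item 3 of \thmref{thm:relpolar} says that at each such $p$, the restriction $f_{|_{V(z_0-z_0(p))}}$ has Milnor number $1$ at $p$, and the proposition identifying $\mu = 1$ with complex non-degeneracy (together with a direct implicit-function computation showing that the Hessian of $z_0|_\mf$ at $p$ is, up to the nonzero scalar $-1/(\partial f/\partial z_0)(p)$, the Hessian of $f_{|_{V(z_0 - z_0(p))}}$ at $p$) then yields that each such $p$ is a complex non-degenerate critical point of $z_0|_\mf$. Since $z_0(p) \neq 0$ at each such $p$, the earlier exercise on real and imaginary parts of holomorphic Morse functions gives that $r = |z_0|^2$ has a real non-degenerate critical point of index precisely $n$ at each $p \in \mf \cap \Gamma$.

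Running Morse theory on $r : \mf \to [0, \infty)$: for $\eta > 0$ sufficiently small, the sublevel set $\mf \cap \{r \leq \eta\}$ is a tubular neighborhood of the smooth complex submanifold $\mf \cap V(z_0)$ that deformation retracts onto $F_{f_{|_{V(z_0)}}, \0}$. Increasing $\eta$ past each of the finitely many critical values $|z_0(p)|^2$ attaches one $n$-cell by \thmref{thm:morse}; and once $\eta$ exceeds all of them, \thmref{thm:nocrit} should imply that the sublevel set is already homotopy equivalent to all of $\mf$. The main obstacle is exactly this last step, namely controlling the behavior of $r$ near the boundary $\partial \mf = S_\epsilon \cap f^{-1}(a)$: classical Morse theory for manifolds with boundary does not apply directly because $r$ is not constant on $\partial \mf$ and may have its own boundary critical points. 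The standard remedy, which is the technical heart of L\^e's original argument and which I would make precise, is to choose $\epsilon$ first and then $|a|$ much smaller still so that the finitely many critical points of $z_0|_\mf$ cluster near $\0$ and far from $\partial \mf$, and then to construct a vector field that is gradient-like for $r$ in the interior and tangent to $\partial \mf$ outside small neighborhoods of these critical points; flowing along this vector field produces the required retractions and shows that the cell attachments describe the correct homotopy type without boundary interference.
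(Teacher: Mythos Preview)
Your argument is correct and follows the same Morse-theoretic strategy as the paper: restrict $|z_0|^2$ to the Milnor fiber, identify its critical points with $\Gamma^1_{f,z_0}\cap V(f-a)$, verify via reducedness of the polar curve that each is complex non-degenerate (hence real index $n$), and count them as $\big(\Gamma^1_{f,z_0}\cdot V(f)\big)_\0$. The one substantive difference lies in how the boundary difficulty is handled. You keep the round ball $B_\epsilon$ and propose constructing a gradient-like vector field tangent to $\partial F_{f,\0}$ away from the critical points; the paper instead replaces $B_\epsilon$ by the product neighborhood $\D_\delta\times B^{2n}_\epsilon$ (with $z_0$ ranging over $\D_\delta$), calling this substitution ``the main technical issue, which we will not prove.'' In that model the boundary decomposes into a piece $(\partial\D_\delta)\times B^{2n}_\epsilon$ on which $|z_0|$ is already constant and a ``side'' piece $\D_\delta\times S^{2n-1}_\epsilon$, on which one still needs the lemma that $z_0$ restricted to $f^{-1}(a)$ has no critical points there---so the polydisk trick does not eliminate the boundary issue so much as repackage it into a cleaner statement. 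Both resolutions are standard; your version is closer in spirit to stratified Morse theory, while the paper's is the device L\^e himself used.
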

\begin{proof} We once again assume that $z_0$ is a coordinate. The main technical issue, which we will not prove, is that one needs to know that one may use a disk times a ball, rather than a ball itself, when defining the Milnor fiber. More precisely, we shall assume that, up to homotopy, $F_{f,0}$ is given by
$$
F'_{f,0}:=\left(\D_\delta\times B^{2n}_\epsilon\right)\cap f^{-1}(a),
$$
where $0<|a|\ll\delta\ll\epsilon\ll 1$. Note that $F_{f_{|_{V(z_0)}}, \0}=V(z_0)\cap F'_{f,0}$.

The idea of the proof is simple: one considers $r:=|z_0|^2$ on $F'_{f,0}$. As in our previous proof, there is the problem that $r$ has a critical point at each point where $z_0=0$. But, again, as in our previous proof, $0$ is a regular value of $z_0$ restricted to $F'_{f,0}$. Hence, for $0<\eta\ll |a|$, 
$$r^{-1}[0,\eta]\cap F'_{f,0}\cong F_{f_{|_{V(z_0)}}, \0}\times \D_\eta.
$$

One also needs to prove a little lemma that, for $a$, $\delta$, and $\epsilon$ as we have chosen them, $z_0$ itself has no critical points on $\left(\D_\delta\times S^{2n-1}_\epsilon\right)\cap f^{-1}(a)$.

Now, one lets the value of $r$ grow from $\eta$ to $\delta$. Note that the critical points of $z_0$ restricted to $F'_{f,0}$ occur precisely at points in
$$
V\left(\frac{\partial f}{\partial z_1}, \dots, \frac{\partial f}{\partial z_n}\right)\cap V(f-a) \ = \ \Gamma_{f, z_0}^1\cap V(f-a),
$$
and, by the choice of generic $z_0$ all of these critical points will be complex non-degenerate. The result follows.
\end{proof}

\medskip

\begin{rem} Since attaching $n$-cells does not affect connectivity in dimensions $\leq n-2$, by inductively applying the above attaching theorem, one obtains that the Milnor fiber of a hypersurface in $\C^{n+1}$ with a critical locus of dimension $s$ is $(n-s-1)$-connected. Thus, one recovers the main result of \cite{katomatsu}.

It is also worth noting that L\^e's attaching theorem leads to a Lefschetz hyperplane result. It tells one that, for $k\leq n-2$, $H_k(F_{f,\0})\cong H_k(F_{f_{|_{V(z_0)}},\0})$ and that there is an exact sequence
$$
0\rightarrow  H_n(F_{f,\0})\rightarrow \Z^\tau \rightarrow H_{n-1}(F_{f_{|_{V(z_0)}},\0}) \rightarrow  H_{n-1}(F_{f,\0})\rightarrow 0,
$$
where $\tau=\left(\Gamma_{f, z_0}^1\cdot V(f)\right)_\0$.
\end{rem}

\medskip

\thmref{thm:leattach} seems to have been the first theorem about hypersurface singularities of arbitrary dimension that actually allowed for algebraic calculations. By induction, the theorem yields the Euler characteristic of the Milnor fiber and also puts bounds on the Betti numbers, such as $b_n(F_{f,\0})\leq\left(\Gamma_{f, z_0}^1\cdot V(f)\right)_\0$.

However, if $\dim_\0\Sigma f=0$, then $\left(\Gamma_{f, z_0}^1\cdot V(f)\right)_\0>\mu_f(\0)$ (this is {\bf not} obvious), and so the question is: are there numbers that we can calculate that are ``better'' than inductive versions of $\left(\Gamma_{f, z_0}^1\cdot V(f)\right)_\0$? We want numbers that are actual generalizations of the Milnor number of an isolated critical point.

\smallskip

Our answer to this is: yes -- the L\^e numbers, as we shall see in the next lecture.

\section{\bf Lecture 3: Proper intersection theory and L\^e numbers} 

Given a hypersurface $V(f)$ and a point $\mbf p\in V(f)$, if $\dim_{\mbf p}\Sigma f=0$, then the Milnor number of $f$ at $\mbf p$ provides a great deal of information about the local ambient topology of $V(f)$ at $\mbf p$.

But now, suppose that $s:=\dim_{\mbf p}\Sigma f>0$. What data should replace/generalize a number associated to a point? For instance, suppose $s=1$. A reasonable hope for ``good data'' to associate to $f$ at $\mbf p$ would be to assign a number to each irreducible component curve of $\Sigma f$ at $\mbf p$, and also assign a number to $\mbf p$. More generally, if $s$ is arbitrary, one could hope to produce effectively calculable topologically important data which consists of analytic sets of dimensions $0$ through $s$, with numbers assigned to each irreducible component.

\smallskip

This is what the {\bf L\^e cycles}, $\Lambda_{f, \mbf z}^s$, ..., $\Lambda_{f, \mbf z}^1$, $\Lambda_{f, \mbf z}^0$ (\cite{levar1}, \cite{levar2}, \cite{lecycles}), give you.

\smallskip

We briefly need to discuss what analytic cycles are, and give a few basic properties. Then we will define the L\^e cycles and the associated {\bf L\^e numbers}, and calculate some examples.

\smallskip

We need to emphasize that, in this lecture and the last one, when we write $V(\alpha)$, where $\alpha$ is an ideal (actually a coherent sheaf of ideals in $\mathcal O_\U$) we mean $V(\alpha)$ as a scheme, not merely an analytic set, i.e., we keep in mind what the defining ideal is.

\smallskip

\noindent{\rule{1in}{1pt}}

We restrict ourselves to the case of analytic cycles in an open subset $\U$ of some affine space $\C^N$. An {\it analytic cycle} in
$\U$ is a formal sum $\sum m_{{}_V} [V]$, where the $V$'s are (distinct) irreducible analytic
subsets of $\U$, the $m_{{}_V}$'s are integers, and the collection $\{V\}$ is a locally
finite collection of subsets of $\U$.  As a cycle is a locally finite sum, and as we
will normally be concentrating on the germ of an analytic space at a point, usually we
can safely assume that a cycle is actually a finite formal sum. If $C=\sum m_{{}_V} [V]$, we write $C_{{}_V}$ for the coefficient of $V$ in $C$, i.e., $C_{{}_V}=m_{{}_V}$.

For clarification of what structure we are considering, we shall at times enclose
cycles in square brackets, [ ] , and analytic sets in a pair of vertical lines, $| |$; with this notation, 
$$\left|\sum m_{{}_V} [V]\right| \ = \ \bigcup_{m_{{}_V}\neq 0}V.$$
Occasionally, when the notation becomes cumbersome, we shall simply state explicitly
whether we are considering  V  as a scheme, a cycle, or a set.

Essentially all of the cycles that we will use will be of the form $\sum m_{{}_V} [V]$, where all of the $V$'s have the same dimension $d$ (we say that the cycle is of pure dimension $d$) and $m_{{}_V}\geq 0$ for all $V$ (a non-negative cycle).

We need to consider not necessarily reduced complex analytic spaces (analytic schemes) $(X, \mathcal O_X)$ (in the sense of \cite{graurem1} and  \cite{graurem2}), where $X\subseteq\U$.  Given an analytic space, $(X, \mathcal O_{{}_X})$, we wish to define the cycle associated
to $(X, \mathcal O_{{}_X})$.  The cycle is defined to be the sum of the irreducible components, $V$, each one with a coefficient $m_{{}_V}$, which is its geometric multiplicity, i.e., $m_{{}_V}$ is how many times that component should be thought of as being there.

In the algebraic context, this is given by Fulton in section
1.5 of \cite{fulton} as  $$[X] \ := \ \sum m_{{}_V} [V] ,$$  where the $V$'s run over all
the irreducible components of $X$, and $m_{{}_V}$ equals the length of the ring $\mathcal
O_{{}_{V, X}}$, the Artinian local ring of $X$ along $V$.  In the analytic context, we wish to
use the same definition, but we must be more careful in defining the $m_{{}_V}$. Define
$m_{{}_V}$  as  follows.  Take a point $\mathbf p$ in $V$.  The germ of $V$ at $\mathbf p$
breaks up into irreducible germ components $(V_\mathbf p)_i$.  Take any one of the
$(V_\mathbf p)_i$ and let $m_{{}_V}$ equal the Artinian local length of the ring $(\mathcal O_{{}_{X, \mathbf
p}})_{(V_\mathbf p)_i}$ (that is, the local ring of $X$ at $\mathbf p$ localized at the
prime corresponding to $(V_\mathbf p)_i$).  This number is independent of the point
$\mathbf p$ in $V$ and the choice of $(V_\mathbf p)_i$.

\smallskip

Note that, in particular, if $\mbf p$ is an isolated point in $V(\alpha)$, then the coefficient of $\mbf p$ (really $\{\mbf p\}$) in $[V(\alpha)]$ is given by
$$[V(\alpha)]_\mbf p \ = \ \dim_{\C}\frac{\C\{z_0-p_0, \dots, z_n-p_n\}}{\alpha}.
$$

\smallskip 

Two cycles $C:=\sum m_{{}_V} [V]$ and $D:=\sum m_{{}_W} [W]$, of pure dimension $a$ and $b$, respectively, in $\U$ are said to {\bf intersect properly} if and only if, for all $V$ and $W$, $\dim (V\cap W)=a+b-N$ (recall that $N$ is the dimension of $\U$). 

When $C$ and $D$ intersect properly, there is a well-defined {\bf intersection product} which yields an {\bf intersection cycle} ({\bf not} a rational equivalence class); this intersection cycle is denoted by $(C\cdot D; \U)$ or simply $C\cdot D$ if the ambient complex manifold is clear. See Fulton \cite{fulton}, Section 8.2 and pages 207-208. Recalling our earlier notation, if $V$ is an irreducible component of $|C|\cap|D|$, then $(C\cdot D)_{{}_V}$ is the coefficient of $V$ in the intersection cycle. In particular, if $C$ and $D$ intersect properly in an isolated point $\mbf p$, then $(C\cdot D)_{\mathbf p}$ is called the {\bf intersection number of $C$ and $D$ at $\mbf p$.}

\medskip

We will now give some properties of intersection cycles and numbers. All of these can found in, or easily derived from, \cite{fulton}. We assume that all intersections written below are proper.

\begin{enumerate}

\item Suppose that $Y$ and $Z$ are irreducible analytic sets, and that $V$ is an irreducible component of their proper intersection. Then, $([Y]\cdot [Z])_V\geq 1$, with equality holding if and only if, along a generic subset of $V$, $Y$ and $Z$ are smooth and intersect transversely.

\medskip

\item If $f, g \in \mathcal O_U$, then $[V(fg)] =
[V(f)] + [V(g)]$; in particular, $[V(f^m)] = m[V(f)]$.

\medskip

\item $C\cdot D= D\cdot C$,  \ $(C\cdot D)\cdot E=C\cdot (D\cdot E)$,  \ and  
$$C\cdot\sum_i m_iD_i\ = \ \sum_i  m_i(C\cdot D_i).$$

\medskip

\item {\bf Locality}: Suppose that $Z$ is a component of $|C\cdot D|$ and that $\W$ is an open subset of $\U$ such that $Z\cap\W\neq\emptyset$ and $Z\cap\W$ is irreducible in $\W$. Then, 
$$
(C\cap\W \ \cdot \ D\cap\W; \ \W)_{Z\cap W} \ = \ (C\cdot \ D; \ \U)_{Z} 
$$

\medskip

\item If $f$ contains no isolated or embedded components of $V(\alpha)$, then $V(\alpha)\cdot V(f) = V(\alpha+\langle f\rangle)$. In particular, if $f_1, f_2, \dots f_k$ is a regular sequence, then
$$
V(f_1)\cdot V(f_2)\cdot \ldots \cdot V(f_k) \ = \ [V(f_1, f_2, \dots, f_k)].
$$

\medskip

\item {\bf Reduction to the normal slice}: Let $Z$ be a $d$-dimensional component of $C\cdot D$. Let $\mbf p$ be a smooth point of $Z$. Let $M$ be a {\bf normal slice} to $Z$ at $\mbf p$, i.e., let $M$ be a complex submanifold of $\U$ which transversely intersects $Z$ in the isolated point $\mbf p$. Furthermore, assume that $M$ transversely the smooth parts of $|C|$ and $|D|$ in an open neighborhood of $\mbf p$. Then, 
$$
(C\cdot \ D; \ \U)_{Z}  \ = \ \big((C\cdot M) \ \cdot  \ (D\cdot M); \ M)_{\mbf p}.
$$

\medskip

\item {\bf Conservation of number}: Let $E$ be a purely $k$-dimensional cycle in $\U$. Let $g_1(\mbf z, t)$, $g_2(\mbf z, t)$, \dots, $g_k(\mbf z, t)$ be in ${\mathcal O}_{\U\times{\D^{{}^\circ}}}$, for some open disk $\D^\circ$ containing the origin in $\C$. For fixed $t\in\D^\circ$, let $C_t$ be the cycle $[V(g_1(\mbf z, t), g_2(\mbf z, t),\dots, g_k(\mbf z, t))]$ in $\U$. Assume that $E$ and $C_0$ intersect properly in the isolated point $\mbf p$.

Then,
$$
(E\cdot C_0)_{\mbf p} \ = \ \sum_{\mbf q\in B_\epsilon^\circ(\mbf p)\cap|E|\cap |C_t|}\left(E\cdot C_t\right)_\mbf q,
$$
for $|t|\ll\epsilon\ll 1$.
\medskip

\item Suppose that $Z$ is a curve which is irreducible at $\mbf p$. Let $\mbf r(t)$ be a reduced parametrization of the germ of $Z$ at $\mbf p$ such that $\mbf r(0)=\mbf p$. (Here, by reduced, we mean that if $\mbf r(t)=\mbf p+\mbf a_1t+\mbf a_2t^2+\cdots$, then the exponents of the non-zero powers of $t$ with non-zero coefficients have no common factor, other than $1$.)  Suppose that $f\in\mathcal O_\U$ is such that that $V(f)$ intersects $Z$ in the isolated point $\mbf p$. Then,
$$
(Z\cdot V(f))_\mbf p \ = \ \operatorname{mult}_t f(\mbf r(t)),
$$
that is, the exponent of the lowest power of $t$ that appears in $f(\mbf r(t))$.

\bigskip

\begin{exer} Use the last property of intersection numbers above to show the following:

\begin{enumerate}

\item Suppose that $C=\sum_W m_W[W]$ is a purely $1$-dimensional cycle, and that $C$ properly intersects $V(f)$ and $V(g)$ at a point $\mbf p$. Suppose that, for all $W$, $(W\cdot V(f))_{\mbf p}<(W\cdot V(g))_{\mbf p}$. Then, $C$ properly intersects $V(f+g)$ at $\mbf p$ and $(C\cdot V(f+g))_{\mbf p}=(C\cdot V(f))_{\mbf p}$.

\medskip

\item Suppose that $C$ is a purely $1$-dimensional cycle, that $|C|\subseteq V(f)$, and that $C$ properly intersects $V(g)$ at a point $\mbf p$. Then, $C$ properly intersects $V(f+g)$ at $\mbf p$ and $(C\cdot V(f+g))_{\mbf p}= (C\cdot V(g))_{\mbf 0}$.
\end{enumerate}
\end{exer}

\end{enumerate}

\smallskip

\noindent{\rule{1in}{1pt}}

We are now (almost) ready to define the L\^e cycles and L\^e numbers. However, first, we need a piece of notation and we need to define the {\bf (relative) polar cycles}.

\medskip

Suppose once again that $\U$ is an open neighborhood of the origin in $\C^{n+1}$, and that \hbox{$f:(\U,\0)\rightarrow (\C,0)$} is a complex analytic function, which is not locally constant. We use coordinates $\mbf z=(z_0, \dots, z_n)$ on $\U$. We will at times assume, after possibly a linear change of coordinates, that $\mbf z$ is generic in some sense with respect to $f$ at $\0$. As before, we let $s:=\dim_\0\Sigma f$.

\smallskip

If $C=\sum_V m_V[V]$ is a cycle in $\U$ and $Z$ is an analytic subset of $\U$, then we let
$$
C_{{}_{\subseteq Z}} \ = \ \sum_{V\subseteq Z}m_V[V] \hskip 0.4in\textnormal{and}\hskip 0.4in 
C_{{}_{\not\subseteq Z}} \ = \ \sum_{V\not\subseteq Z}m_V[V].
$$

\smallskip

\begin{defn} For $0\leq k\leq n+1$, we define the {\bf $k$-th polar cycle of $f$ with respect to $\mbf z$} to be
$$
\Gamma^k_{f, \mbf z} \ := \ \left[V\left(\frac{\partial f}{\partial z_k}, \frac{\partial f}{\partial z_{k+1}}, \dots, \frac{\partial f}{\partial z_n}\right)\right]_{\not\subseteq \Sigma f}.
$$
Here, by $\Gamma^{n+1}_{f, \mbf z}$, we mean simply $[\U]$. Also, note that $\Gamma^0_{f, \mbf z}=0$.
\end{defn}

\medskip

As a set, this definition of $\Gamma^1_{f, \mbf z}$ agrees with our earlier definition of $\Gamma^1_{f, z_0}$; however, now we give this relative polar curve a cycle structure. If $z_0$ is generic enough, then all of the coefficients of components of the cycle $\Gamma^1_{f, \mbf z}$ will be $1$, but we typically do not want to assume this level of genericity.

Also note that every irreducible component of $V\left(\frac{\partial f}{\partial z_k}, \frac{\partial f}{\partial z_{k+1}}, \dots, \frac{\partial f}{\partial z_n}\right)$ necessarily has dimension at least $k$; hence, for $k\geq s+1$, there can be no components contained in $\Sigma f$ near the origin. Therefore, near the origin, for $k\geq s+1$,
$$
\Gamma^k_{f, \mbf z} \ := \ \left[V\left(\frac{\partial f}{\partial z_k}, \frac{\partial f}{\partial z_{k+1}}, \dots, \frac{\partial f}{\partial z_n}\right)\right].
$$

\bigskip

\begin{exer} In this exercise, you will be asked to prove what we generally refer to as the {\it Teissier trick}, since it was first proved by Teissier in \cite{teissiercargese} in the case of isolated critical points, but the proof is the same for arbitrary $s$.

Suppose that $\dim_\0\Gamma^1_{f, \mbf z}\cap V(f)\leq 0$. Then, $\dim_\0\Gamma^1_{f, \mbf z}\cap V(z_0)\leq 0$, $\dim_\0\Gamma^1_{f, \mbf z}\cap V\left(\frac{\partial f}{\partial z_0}\right)\leq 0$, and 
$$
\left(\Gamma^1_{f, \mbf z}\cdot V(f)\right)_\0 \ = \ \left(\Gamma^1_{f, \mbf z}\cdot V(z_0)\right)_\0  \ + \ \left(\Gamma^1_{f, \mbf z}\cdot V\left(\frac{\partial f}{\partial z_0}\right)\right)_\0 .
$$
(Hint: Parameterize the irreducible components of the polar curve and use the Chain Rule for differentiation.)

In particular, if $\dim_\0\Gamma^1_{f, \mbf z}\cap V(f)\leq 0$ and $\Gamma^1_{f, \mbf z}\neq 0$, then 
$$
\left(\Gamma^1_{f, \mbf z}\cdot V(f)\right)_\0 \ > \ \left(\Gamma^1_{f, \mbf z}\cdot V\left(\frac{\partial f}{\partial z_0}\right)\right)_\0 .
$$
\end{exer}

\bigskip

\begin{exer} Suppose that $k\leq n$, and that $\Gamma^{k+1}_{f, \mbf z}$ is purely $(k+1)$-dimensional and is intersected properly by $V\left(\frac{\partial f}{\partial z_k}\right)$.

Prove that
$$
\left(\Gamma^{k+1}_{f, \mbf z}\cdot V\left(\frac{\partial f}{\partial z_k}\right)\right)_{\not\subseteq \Sigma f} \ = \ \Gamma^{k}_{f, \mbf z}.
$$
\end{exer}

\bigskip

\begin{defn} Suppose that $k\leq n$, and that $\Gamma^{k+1}_{f, \mbf z}$ is purely $(k+1)$-dimensional and is intersected properly by $V\left(\frac{\partial f}{\partial z_k}\right)$. 

 Then we say that the  {\bf $k$-dimensional L\^e cycle exists}  and define it to be
$$
\Lambda^k_{f, \mbf z} \ := \ \left(\Gamma^{k+1}_{f, \mbf z}\cdot V\left(\frac{\partial f}{\partial z_k}\right)\right)_{\subseteq \Sigma f}.
$$

Hence,
$$
\Gamma^{k+1}_{f, \mbf z}\cdot V\left(\frac{\partial f}{\partial z_k}\right) \ = \ \Gamma^{k}_{f, \mbf z} \ + \ \Lambda^{k}_{f, \mbf z}.
$$
\end{defn}

\medskip

\begin{rem} Note that, if $\Lambda^s_{f, \mbf z}$ exists, then
$$
\Gamma^{s+1}_{f, \mbf z} \ = \ V\left(\frac{\partial f}{\partial z_{s+1}}, \dots, \frac{\partial f}{\partial z_{n}}\right)
$$
is purely $(s+1)$-dimensional and, for all $k\geq s$, $\Gamma^{k+1}_{f, \mbf z}$ is purely $(k+1)$-dimensional and is intersected properly by $V\left(\frac{\partial f}{\partial z_k}\right)$.

The point is that saying that $\Lambda^s_{f, \mbf z}$ exists implies that, for $s+1\leq k\leq n$,
 $\Lambda^k_{f, \mbf z}$ exists and is $0$.
 
 Furthermore, you should note that if  $\Lambda^k_{f, \mbf z}$ for all $k\leq s$, then each $\Lambda^k_{f, \mbf z}$ is purely $k$-dimensional and $\Sigma f=\bigcup_{k\leq s}\left|\Lambda^k_{f, \mbf z}\right|$.
 \end{rem}

\bigskip

\begin{exer} We wish to use intersection cycles to quickly show that the Milnor number is {\it upper-semicontinuous} in a family.

\begin{enumerate}

\item Suppose that $\dim_\0 \Sigma\big(f_{|_{V(z_0)}}\big)\leq 0$. Show that
$$
\mu_\0\big(f_{|_{V(z_0)}}\big) \ = \ \left(\Gamma^1_{f, \mbf z}\cdot V(z_0)\right)_\0 \ + \  \left(\Lambda^1_{f, \mbf z}\cdot V(z_0)\right)_\0.
$$

\medskip

\item
Suppose that we have a complex analytic function $F:(\D^\circ\times\U, \D^\circ\times\{\0\})\rightarrow(\C, 0)$. For each $t\in \D^\circ$, define $f_t:(\U,\0)\rightarrow(\C,0)$ by $f_t(\mbf z):=F(t,\mbf z)$, and assume that $\dim_\0\Sigma f_t=0$. Thus, $f_t$ defines a one-parameter family of isolated singularities.

Show, for all $t$ such that $|t|$ is sufficiently small, that  $\mu_\0(f_0) \geq \mu_\0(f_t)$, with equality if and only $\Gamma^1_{f, \mbf z}=0$ and $\D^\circ\times\U$ is the only component of $\Sigma F$ (near $\0$).

\end{enumerate}
\end{exer}

\medskip

\begin{defn} Suppose that, for all $k$ such that $0\leq k\leq n$, $\Lambda^k_{f, \mbf z}$ exists, 
 and 
 $$\dim_\0\Lambda^k_{f, \mbf z}\cap V(z_0, \dots, z_{k-1})\leq 0\hskip 0.2in \textnormal{and}\hskip 0.2in \dim_\0\Gamma^k_{f, \mbf z}\cap V(z_0, \dots, z_{k-1})\leq 0.
 $$
 
 Then, we say that the {\bf L\^e numbers} and {\bf polar numbers} of $f$ with respect to $\mbf z$ {\bf exist} in a neighborhood of $\0$ and define them, respectively, for $0\leq k\leq n$ and for each $\mbf p=(p_0, \dots, p_n)$ near the origin, to be
 $$
 \lambda^k_{f, \mbf z}(\mbf p):= \left(\Lambda^k_{f, \mbf z}\cdot V(z_0-p_0, \dots, z_{k-1}-p_{k-1})\right)_\mbf p
 $$
 and
  $$
 \gamma^k_{f, \mbf z}(\mbf p):= \left(\Gamma^k_{f, \mbf z}\cdot V(z_0-p_0, \dots, z_{k-1}-p_{k-1})\right)_\mbf p.
 $$
 \end{defn}

\medskip

\begin{rem} It is the L\^e numbers which will serve as our generalization of the Milnor of an isolated critical point. 

However, the existence of the polar numbers tells us that our coordinates are generic enough for many of our results to be true. In general, the condition that we will require of our coordinates -- which is satisfied generically -- will be that the L\^e and polar numbers, $\lambda^k_{f, \mbf z}(\0)$ and  $\gamma^k_{f, \mbf z}(\0)$, exist for $1\leq k\leq s$. (The existence when $k=0$ is automatic.) Note that when $s=0$, there is no requirement.
\end{rem}

\medskip

\begin{exer} Suppose that $s=1$. Show that the condition that $\lambda^1_{f, \mbf z}(\0)$ and  $\gamma^1_{f, \mbf z}(\0)$ exist is equivalent to requiring $\dim_\0\Sigma\big(f_{|_{V(z_0)}}\big)=0$.
\end{exer}

\medskip

Now we wish to look at three examples of L\^e cycle and L\^e number calculations.

\begin{exm}
Suppose that $s=0$. Then,
regardless of the coordinate system $\mathbf z$, the
only possibly non-zero L\^e number is $\lambda^0_{f, \mathbf z}(\mathbf 0)$.  Moreover, as
$V\left(\frac{\partial f}{\partial z_0} , \frac{\partial f}{\partial z_1}, \dots ,
\frac{\partial f}{\partial z_n}\right)$ is $0$-dimensional,   $\Gamma^1_{f, \mathbf z}=V\left(\frac{\partial
f}{\partial z_1}, \dots , \frac{\partial f}{\partial z_n}\right)$ is a $1$-dimensional complete intersection, and so has no components contained in $\Sigma f$ and has no embedded components. 

Therefore,   $$\lambda^0_{f, \mathbf z}(\mathbf
0) = \left(\Gamma^1_{f, \mathbf z} \cdot V\left(\frac{\partial f}{\partial
z_0}\right)\right)_\mathbf 0 =  \left(V\left(\frac{\partial f}{\partial z_1}, \dots ,
\frac{\partial f}{\partial z_n}\right) \cdot V\left(\frac{\partial f}{\partial
z_0}\right)\right)_\mathbf 0 =$$ $$\left(V\left(\frac{\partial f}{\partial z_0}, \dots ,
\frac{\partial f}{\partial z_n}\right)\right)_\mathbf 0 = \textnormal{ the Milnor number of  }f
\textnormal{ at } \mathbf 0 .$$
\end{exm}

\medskip

\begin{exm}\label{exm:degen} Let $f = y^2 - x^a - tx^b$, where $a > b > 1$.  We fix the
coordinate system $(t,x,y)$ and will suppress any further reference to it.

$$\Sigma f = V( -x^b, \ -ax^{a-1} - btx^{b-1}, \ 2y) = V(x,y) .$$  $$\Gamma^2_f =
V\left(\frac{\partial f}{\partial y}\right) = V(2y) = V(y) .$$ $$\Gamma^2_f \cdot
V\left(\frac{\partial f}{\partial x}\right) =  V(y) \cdot V(-ax^{a-1} - btx^{b-1}) =
V(y) \cdot (V(-ax^{a-b} -bt) + V(x^{b-1}))  =$$  $$V(-ax^{a-b} - bt, y) + (b-1)V(x, y)
= \Gamma^1_f + \Lambda^1_f .$$  $$\Gamma^1_f \cdot V\left(\frac{\partial f}{\partial
t}\right) = V(-ax^{a-b} - bt, y) \cdot V(-x^b) = bV(t,x,y) = b[\mathbf 0] = \Lambda^0_f
.$$  Thus, $\lambda^0_f(\mathbf 0) = b$ and $\lambda^1_f(\mathbf 0) = b-1$.  

\smallskip

Notice that
the exponent $a$ does not appear;  this is actually good, for  $f= y^2 - x^a - tx^b =
y^2 -x^b(x^{a-b} - t)$ which, after an analytic coordinate change at the origin, equals
$y^2 - x^bu$.
\end{exm}

\medskip

\begin{exm}\label{exm:fmcone} Let $f = y^2 - x^3 - (u^2 + v^2 + w^2)x^2$
and fix the coordinates $(u, v, w, x, y)$. $$\Sigma f = V(-2ux^2, \ -2vx^2, \ -2wx^2, \
-3x^2 - 2x(u^2 + v^2 + w^2), \ 2y)  = V(x,y) .$$

As $\Sigma f$ is three-dimensional, we begin our calculation with $\Gamma^4_f$.

$$\Gamma^4_f = V(-2y) = V(y) .$$

\vskip .1in

$$\Gamma^4_f \cdot V\left(\frac{\partial f}{\partial x}\right) = V(y) \cdot V( -3x^2 -
2x(u^2 + v^2 + w^2)) = $$ $$V(-3x - 2(u^2 + v^2 + w^2), y) + V(x,y) = \Gamma^3_f +
\Lambda^3_f .$$

\vskip .1in

$$\Gamma^3_f \cdot V\left(\frac{\partial f}{\partial w}\right) =  V(-3x - 2(u^2 + v^2 +
w^2), y) \cdot V(-2wx^2) = $$ $$V(-3x - 2(u^2 + v^2), w, y) + 2V(u^2 + v^2 + w^2, x, y)
= \Gamma^2_f + \Lambda^2_f .$$

\vskip .1in

$$\Gamma^2_f \cdot V\left(\frac{\partial f}{\partial v}\right) = V(-3x - 2(u^2 + v^2),
w, y) \cdot V(-2vx^2) = $$ $$V(-3x-2u^2, v, w, y) + 2V(u^2 + v^2, w, x, y) = \Gamma^1_f
+ \Lambda^1_f.$$

\vskip .1in

$$\Gamma^1_f \cdot V\left(\frac{\partial f}{\partial u}\right) = V(-3x-2u^2, v, w, y)
\cdot V(-2ux^2) = $$ $$V(u, v, w, x, y) + 2V(u^2, v, w, x, y) = 5[\mathbf 0] =
\Lambda^0_f.$$

Hence, $\Lambda^3_f = V(x,y)$, $\Lambda^2_f = 2V(u^2 + v^2 + w^2, x, y) =$ a cone (as a
set), $\Lambda^1_f = 2V(u^2 + v^2, w, x, y)$, and $\Lambda^0_f = 5[\mathbf 0]$.  Thus, at
the origin, $\lambda^3_f = 1$, $\lambda^2_f = 4$, $\lambda^1_f = 4$, and $\lambda^0_f =
5$.

Note that $\Lambda^1_f$ depends on the choice of coordinates - for, by symmetry, if we
re-ordered $u, v,$ and $w$, then $\Lambda^1_f$ would change correspondingly.  Moreover,
one can check that this is a generic problem.

Such ``non-fixed'' L\^e cycles arise from the absolute polar varieties of L\^e and Teissier (see \cite{leens}, \cite{teissiervp1}, \cite{teissiervp2}) of the higher-dimensional L\^e cycles.  For instance, in the
present case, $\Lambda^2_f$ is a cone, and its one-dimensional polar variety varies
with the choice of coordinates, but generically always consists of two lines; this is
the case for $\Lambda^1_f$ as well.  Though the L\^e cycles are not even generically
fixed, the L\^e numbers are, of course, generically independent of the coordinates.
\end{exm}

\smallskip

\noindent{\rule{1in}{1pt}}

\smallskip

Of course, you should be asking yourself: what does the calculation of the L\^e numbers tell us? We shall discuss this in the next lecture.

\section{\bf Lecture 4: Properties of L\^e numbers and vanishing cycles} 

Now that we know what L\^e cycles and L\^e numbers are, the question is: what good are they?

\medskip

Throughout this section, we will be in our usual set-up. We let $\U$ be an open neighborhood of the origin in $\C^{n+1}$, $f:(\U,\0)\rightarrow(\C,0)$ is a complex analytic function which is not locally constant, $s:=\dim_0\Sigma f$, and $\mbf z=(z_0, \dots, z_n)$ is a coordinate system on $\U$.

\smallskip

{\bf We shall also assume throughout this section, for all $k\leq s$, that $\lambda^k_{f,\mbf z}(\0)$ and $\gamma^k_{f,\mbf z}(\0)$ exist. We assume that $\U$ is chosen (or re-chosen) that $\dim\Sigma f=s$ (``globally'' in $\U$) and that $\lambda^k_{f,\mbf z}(\mbf p)$ and $\gamma^k_{f,\mbf z}(\mbf p)$ exist for all $k\leq s$ and $\mbf p\in\U$.
}

\smallskip 

All of the results on L\^e numbers given here can be found in \cite{lecycles}. However, we have recently replaced our previous assumptions with coordinates being {\it pre-polar} with the condition that L\^e numbers and polar numbers exist (as assumed above).

\medskip

Let us start with the generalization of Milnor's result that the Milnor fiber at an isolated critical point has the homotopy-type of a bouquet of spheres.

\smallskip

\begin{thm}\label{thm:massattach} The Milnor fiber $F_{f, \0}$ has the homotopy-type obtained by beginning with a point and successively attaching $\lambda^{s-k}_{f, \mbf z}(\0)$ $(n-s+k)$-cells for $0\leq k\leq s$.

In particular, there is a chain complex
$$
0\rightarrow\Z^{\lambda^{s}_{f, \mbf z}(\0)}\rightarrow\Z^{\lambda^{s-1}_{f, \mbf z}(\0)}\rightarrow\cdots\rightarrow\Z^{\lambda^{0}_{f, \mbf z}(\0)}\rightarrow 0
$$
whose cohomology at the ${\lambda^{k}_{f, \mbf z}(\0)}$ term is isomorphic to the reduced integral cohomology $\widetilde H^{n-k}(F_{f, \0})$. 

Thus, the Euler characteristic of the Milnor fiber is given by
$$
\chi(F_{f, \0}) \ = \ 1+ \sum_{0\leq k\leq s}(-1)^{n-k}{\lambda^{k}_{f, \mbf z}(\0)}.
$$
\end{thm}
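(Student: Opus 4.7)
The natural strategy is induction on $s = \dim_\0\Sigma f$, using L\^e's attaching theorem (\thmref{thm:leattach}) as the main inductive engine and the Teissier trick to reconcile the intersection numbers involved. The base case $s=0$ is Milnor's theorem from Lecture 2: $F_{f,\0}$ is the wedge of $\mu_\0(f)=\lambda^0_{f,\mbf z}(\0)$ copies of $S^n$, which is exactly ``a point with $\lambda^0$ $n$-cells attached''.

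For the inductive step, choose $z_0$ generic enough that the L\^e numbers and polar numbers of both $f$ and the restriction $f|_{V(z_0)}$ exist, so $\dim_\0\Sigma(f|_{V(z_0)})=s-1$. L\^e's attaching theorem then gives
$$
F_{f,\0} \ \simeq \ F_{f|_{V(z_0)},\0} \ \cup_\varphi \ \left(\coprod_{i=1}^{\tau} D^n\right), \qquad \tau:=\left(\Gamma^1_{f,z_0}\cdot V(f)\right)_\0,
$$
and the Teissier-trick exercise decomposes $\tau=\lambda^0_{f,\mbf z}(\0)+\gamma^1_{f,\mbf z}(\0)$. The inductive hypothesis applied to $f|_{V(z_0)}$ (with induced coordinates $\mbf z'=(z_1,\dots,z_n)$ on $V(z_0)\cong\C^n$) exhibits $F_{f|_{V(z_0)},\0}$ as a point with $\lambda^{(s-1)-j}_{f|_{V(z_0)},\mbf z'}(\0)$ cells of dimension $n-s+j$ attached, for $0\leq j\leq s-1$.

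The key algebraic input is a pair of identities between the L\^e numbers of $f$ and those of its generic hyperplane section. Starting from the intersection-theoretic identity
$$
\Gamma^{k+1}_{f,\mbf z}\cdot V(z_0) \ = \ \Gamma^{k}_{f|_{V(z_0)},\mbf z'} \ + \ \Lambda^{k}_{f|_{V(z_0)},\mbf z'},
$$
which follows by comparing the defining ideals and sorting irreducible components according to whether they lie in $\Sigma(f|_{V(z_0)})$, and then intersecting both sides with $V(\partial f/\partial z_k,\,z_1,\dots,z_{k-1})$ and tracking multiplicities, one obtains
$$
\lambda^{k}_{f|_{V(z_0)},\mbf z'}(\0) \ = \ \lambda^{k+1}_{f,\mbf z}(\0) \quad (1\leq k\leq s-1), \qquad \lambda^{0}_{f|_{V(z_0)},\mbf z'}(\0) \ = \ \lambda^{1}_{f,\mbf z}(\0) + \gamma^{1}_{f,\mbf z}(\0).
$$
Substituting these into the inductive cell structure and adding the $\tau$ top-dimensional cells produces the desired count $\lambda^{s-k}_{f,\mbf z}(\0)$ in dimension $n-s+k$ for $0\leq k\leq s-2$, together with $\lambda^{1}_{f,\mbf z}(\0)+\gamma^{1}_{f,\mbf z}(\0)$ cells in dimension $n-1$ and $\lambda^{0}_{f,\mbf z}(\0)+\gamma^{1}_{f,\mbf z}(\0)$ cells in dimension $n$.

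The hardest step, and the main obstacle, is the handle cancellation needed to remove the $\gamma^1_{f,\mbf z}(\0)$ extra pairs of $(n-1)$- and $n$-cells. The geometric content is that the $\tau$ Morse-critical points producing the $n$-cells correspond to the points of $\Gamma^1_{f,z_0}\cap V(f-a)$ near $\0$, and by conservation of intersection numbers exactly $\gamma^1_{f,\mbf z}(\0)$ of these lie close to $V(z_0)$ while $\lambda^0_{f,\mbf z}(\0)$ stay away; the ``close'' critical points are each paired with a top-dimensional critical point producing one of the $\lambda^0_{f|_{V(z_0)},\mbf z'}(\0)$ cells in $F_{f|_{V(z_0)},\0}$, with attaching data meeting transversely in a single point, so a standard Smale-style handle cancellation eliminates the pair. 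Carrying this out rigorously requires propagating the underlying Morse data — not just the homotopy type — through the induction, which is what forces the genericity assumptions on the coordinates to be of pre-polar type. Once the CW decomposition is established, the cellular cochain complex has exactly the shape displayed in the theorem, and the Euler characteristic formula follows by taking the alternating sum of cell counts.
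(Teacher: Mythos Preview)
The paper does not prove \thmref{thm:massattach} in the text; it simply refers to \cite{lecycles} for the result and later remarks that the chain-complex portion ``can be derived as a formal consequence'' of the iterated nearby/vanishing-cycle description of the L\^e numbers at the end of Lecture~4. Your inductive Morse-theoretic argument via \thmref{thm:leattach}, the Teissier trick, the hyperplane-restriction identities
\[
\lambda^0_{f_{|_{V(z_0)}},\tilde{\mbf z}}(\0)=\gamma^1_{f,\mbf z}(\0)+\lambda^1_{f,\mbf z}(\0),\qquad
\lambda^k_{f_{|_{V(z_0)}},\tilde{\mbf z}}(\0)=\lambda^{k+1}_{f,\mbf z}(\0)\ (k\geq 1),
\]
and handle cancellation of the $\gamma^1_{f,\mbf z}(\0)$ spurious $(n{-}1,n)$-pairs is exactly the strategy of \cite{lecycles}, and your identification of the cancellation step as the real content is accurate. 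So your approach matches the source the paper is citing; the only alternative the paper itself gestures at --- the perverse-sheaf derivation --- yields the chain complex and Euler-characteristic statements but not the stronger homotopy-type assertion, which genuinely needs the Morse/handle argument you outline.

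One small caution: the intermediate identity you wrote,
\[
\Gamma^{k+1}_{f,\mbf z}\cdot V(z_0)\ =\ \Gamma^{k}_{f_{|_{V(z_0)}},\mbf z'}+\Lambda^{k}_{f_{|_{V(z_0)}},\mbf z'},
\]
is not quite the right bookkeeping (the right-hand side is $\Gamma^{k+1}_{f_{|_{V(z_0)}},\mbf z'}\cdot V(\partial f/\partial z_k)$, not $\Gamma^{k+1}_{f,\mbf z}\cdot V(z_0)$); the correct route is to first show $\Gamma^{k+1}_{f,\mbf z}\cdot V(z_0)=\Gamma^{k}_{f_{|_{V(z_0)}},\mbf z'}$ as cycles inside $V(z_0)$ for $k\geq 1$, and then deduce the L\^e-number identities. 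This does not affect your conclusions, since the restriction formulas you actually use are the ones stated (as an exercise) in the paper and are correct.
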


\bigskip

\begin{exer} Let us go back and see what this tells us about our previous examples.

\begin{enumerate}
\item Look back at \exref{exm:degen}. We calculated that $\lambda^0_{f, \mbf z}(\0)=b$ and $\lambda^1_{f, \mbf z}(\0)=b-1$. Determine precisely the homology/cohomology of $F_{f,\0}$. Compare this with what \thmref{thm:massattach} tells us.

\medskip

\item Look back at \exref{exm:fmcone}. What is the Euler characteristic of the Milnor fiber at the origin? What upper-bounds do you obtain for the ranks of $H^1(F_{f, \0})$ and $H^4(F_{f, \0})$?
\end{enumerate}
\end{exer}

\medskip

In a recent paper with L\^e \cite{lemassey}, we showed the upper-bound of $\lambda^{s}_{f, \mbf z}(\0)$ on the rank of $\widetilde H^{n-s}(F_{f, \0})$ is obtained only in trivial cases, as given in the following theorem:

\smallskip

\begin{thm} Suppose that the rank of $\widetilde H^{n-s}(F_{f, \0})$ is $\lambda^{s}_{f, \mbf z}(\0)$. Then, near $\0$, the critical locus $\Sigma f$ is itself smooth, and $\Lambda^k_{f, \mbf z}=0$ for $0\leq k\leq  s-1$. 

This is equivalent saying that $f$ defines a family, parameterized by $\Sigma f$, of isolated hypersurface singularities with constant Milnor number. 
\end{thm}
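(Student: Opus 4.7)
The plan is to prove the theorem by induction on $s = \dim_\0 \Sigma f$, slicing by a generic hyperplane at each inductive step.

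For the base case $s = 0$, the hypothesis reduces to the classical equality $\operatorname{rank} \widetilde H^n(F_{f,\0}) = \mu_\0(f) = \lambda^0_{f,\mbf z}(\0)$, which is automatic by Milnor's theorem; the conclusions are either trivial ($\Sigma f = \{\0\}$ is smooth) or vacuous (no intermediate L\^e cycles).

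For the inductive step $s \geq 1$, I choose a generic $z_0$ and set $g := f|_{V(z_0)}$, so $\dim_\0 \Sigma g = s - 1$. Two ingredients drive the argument. First, L\^e's attaching theorem (Theorem 2.12) supplies the 4-term exact sequence
\begin{align*}
0 \to \widetilde H^{n-1}(F_{f,\0}) \to \widetilde H^{n-1}(F_{g,\0}) \to \Z^\tau \to \widetilde H^n(F_{f,\0}) \to 0,
\end{align*}
together with isomorphisms $\widetilde H^k(F_{f,\0}) \cong \widetilde H^k(F_{g,\0})$ for $k \leq n - 2$, where the Teissier trick identifies $\tau = \gamma^1_{f,\mbf z}(\0) + \lambda^0_{f,\mbf z}(\0)$ (using $\Gamma^0_{f,\mbf z} = 0$ and the defining relation $\Gamma^1_{f,\mbf z} \cdot V(\partial f/\partial z_0) = \Lambda^0_{f,\mbf z}$). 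Second, comparing Euler characteristics via Theorem 4.1 yields the slicing identities
\begin{align*}
\lambda^0_{g,\mbf z'}(\0) = \gamma^1_{f,\mbf z}(\0) + \lambda^1_{f,\mbf z}(\0), \qquad \lambda^k_{g,\mbf z'}(\0) = \lambda^{k+1}_{f,\mbf z}(\0) \text{ for } 1 \leq k \leq s - 1.
\end{align*}

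When $s \geq 2$, one has $n - s \leq n - 2$, so the isomorphism yields $\operatorname{rank} \widetilde H^{(n-1)-(s-1)}(F_{g,\0}) = \lambda^{s-1}_{g,\mbf z'}(\0)$; the hypothesis of the theorem transfers to $g$. By induction $\Sigma g$ is smooth at $\0$ and $\Lambda^k_{g,\mbf z'} = 0$ for $0 \leq k \leq s - 2$, which through the slicing identities yields $\lambda^k_{f,\mbf z}(\0) = 0$ for $2 \leq k \leq s - 1$ and $\gamma^1_{f,\mbf z}(\0) = \lambda^1_{f,\mbf z}(\0) = 0$. The vanishing $\gamma^1_{f,\mbf z}(\0) = 0$ forces the germ of the polar curve $\Gamma^1_{f,z_0}$ at $\0$ to be empty, whence $\Lambda^0_{f,\mbf z} = 0$ near $\0$ as well. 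Smoothness of $\Sigma f$ at $\0$ follows from smoothness of the transverse slice $\Sigma g$ together with the genericity of $z_0$.

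The principal obstacle is the base case $s = 1$: here $n - s = n - 1$ falls outside the range of L\^e's cohomological isomorphism, and the rank arithmetic on the 4-term exact sequence, combined with the length-two chain complex $0 \to \Z^{\lambda^1_{f,\mbf z}(\0)} \to \Z^{\lambda^0_{f,\mbf z}(\0)} \to 0$ from Theorem 4.1, only forces the differential to vanish and otherwise yields tautologies. To conclude $\gamma^1_{f,\mbf z}(\0) = 0$, and therefore $\lambda^0_{f,\mbf z}(\0) = 0$, one must appeal to finer geometric input: either the L\^e-Saito characterization of a one-parameter $\mu$-constant family via the emptiness of the relative polar curve, or the perverse-sheaf/characteristic-cycle viewpoint on vanishing cycles sketched informally in Lecture 4. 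Under the latter, the condition $\operatorname{rank} \widetilde H^{n-s}(F_{f,\0}) = \lambda^s_{f,\mbf z}(\0)$ says that the characteristic cycle of $\phi_f \, \underline{\Z}_\U[n+1]$ is supported on the conormal to the smooth stratum of $\Sigma f$, which is equivalent to the family being $\mu$-constant and gives the final equivalence claimed in the theorem.
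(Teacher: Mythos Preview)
The paper does not actually prove this theorem; it is quoted as a result from \cite{lemassey}, so there is no in-text argument to compare against and your proposal must stand on its own.

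Your inductive framework is sound in outline: the slicing identities from Exercise~4.5 and the four-term exact sequence following \thmref{thm:leattach} are used correctly, and for $s\geq 2$ the hypothesis really does transfer to $g=f_{|_{V(z_0)}}$ via the isomorphism in degrees $\leq n-2$. (Your one-line claim that smoothness of $\Sigma f$ follows from smoothness of the generic slice $\Sigma g=\Sigma f\cap V(z_0)$ is in fact correct, by the standard Bertini-type fact that a generic hyperplane section through a singular point of a positive-dimensional analytic set remains singular there; you might cite this explicitly.)

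However, there is a genuine gap, and you identify it yourself: the case $s=1$. You correctly observe that for $s=1$ the rank arithmetic on the four-term sequence and on the two-term chain complex of \thmref{thm:massattach} collapses to a tautology, and you then only \emph{name} the L\^e--Saito theorem and the characteristic-cycle interpretation of $\phi_f[-1]\Z^\bullet_\U[n+1]$ without carrying out either argument. The sentence ``the condition \dots says that the characteristic cycle \dots is supported on the conormal to the smooth stratum of $\Sigma f$'' is precisely the assertion to be proved, not a proof of it. Since your induction for $s\geq 2$ ultimately rests on the $s=1$ case, the entire argument is incomplete. This is not a cosmetic omission: the $s=1$ step is where the real content of the theorem lies, and the proof in \cite{lemassey} uses the derived-category and perverse-sheaf machinery of Lecture~4 in an essential way, not as an afterthought. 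In particular, one cannot invoke L\^e--Saito directly, because that result presupposes that $\Sigma f$ is already a smooth curve, which is part of what you are trying to establish.
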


\bigskip

Now we want to look at the extent to which the constancy of the L\^e numbers in a family controls the local, ambient topology in the family.

\smallskip

\begin{thm}\label{thm:leconst} Suppose that we have a complex analytic function $F:(\D^\circ\times\U, \D^\circ\times\{\0\})\rightarrow(\C, 0)$. For each $t\in \D^\circ$, define $f_t:(\U,\0)\rightarrow(\C,0)$ by $f_t(\mbf z):=F(t,\mbf z)$, and let $s=\dim_\0\Sigma f_0$. Suppose that the coordinates $\mbf z$ are chosen so that, for all $t\in\D^\circ$, for all $k$ such that $k\leq s$, $\lambda^k_{f_t,\mbf z}(\0)$ and $\gamma^k_{f_t,\mbf z}(\0)$ exist, and assume that, for each $k$, the value of $\lambda^k_{f_t,\mbf z}(\0)$ is constant as a function of $t$. 

Then,

\begin{enumerate}
\item The pair $(\D^\circ\times\U-\Sigma f,  \ \D^\circ\times\{\0\})$ satisfies Thom's $a_f$ condition at $\0$, i.e., all of the limiting tangent spaces from level sets of $f$ at the origin contain the $t$-axis (or, actually, its tangent line).
\item 

\begin{enumerate} 

\item The homology of the Milnor fibre of $f_t$ at the origin is
constant for all $t$ small.

\medskip

\item If $s \leq n-2$, then the fibre-homotopy-type of the Milnor fibrations of $f_t$ at
the origin is constant for all $t$ small;

\medskip

\item If $s \leq n-3$, then the diffeomorphism-type of the Milnor fibrations of $f_t$ at
the origin is constant for all $t$ small.

\end{enumerate}
\end{enumerate}
\end{thm}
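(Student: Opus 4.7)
The two parts have very different flavors. The Thom $a_f$ condition in (1) is the analytic heart of the theorem and must be proved first; once it is in hand, the topological assertions in (2) follow by combining Thom's first isotopy lemma with standard bootstrapping arguments (Whitehead's theorem, the $h$-cobordism theorem), with the dimension hypotheses $s \leq n-2$ and $s \leq n-3$ controlling how strong a conclusion one can reach. The L\^e numbers are designed to be generically well-defined precisely because they are intersection multiplicities of the iterated polar cycles $\Gamma^{k+1}_{F,(t,\mbf z)}$ with $V(\partial F/\partial z_k)$; constancy of these numbers in the family $F$ ought to force the polar cycles to behave as well as possible with respect to the parameter $t$, which is exactly what Thom's $a_f$ formalizes.

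\textbf{Proof of (1).} My plan is to argue by contradiction. Suppose the $a_f$ condition fails at some $(t_0,\0) \in \D^\circ \times \{\0\}$. By the Curve Selection Lemma, there is a real analytic arc $\alpha(u)$ in $\D^\circ \times \U - \Sigma F$ with $\alpha(u) \to (t_0,\0)$ such that the tangent hyperplanes to the level sets of $F$ along $\alpha$ converge to a limit hyperplane $T$ not containing $\partial/\partial t$. I would then translate this into a statement about polar cycles: since $T$ fails to contain the $t$-direction, the arc $\alpha$ must lie on a component of the relative polar cycle $\Gamma^{k+1}_{F,(t,\mbf z)}$ for some $k$, and this component is not purely vertical over the $t$-line. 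Applying the Teissier trick from Lecture 3, together with the conservation-of-number property of intersection cycles, one deduces that the intersection multiplicity that defines $\Lambda^k_{f_{t_0},\mbf z}$ at $\0$ either jumps when passing from $t_0$ to nearby $t$, or else deposits mass at points of $\Lambda^k_{f_{t_0},\mbf z}$ away from $\0$ that does not appear at nearby $t$. Because the L\^e numbers are non-negative and are assumed constant in $t$, either alternative yields a contradiction.

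\textbf{Proof of (2).} With (1) established, the family $F$ admits a Whitney stratification near $(t_0,\0)$ with $\D^\circ \times \{\0\}$ as a stratum and satisfying the $a_f$ condition. Applying Thom's first isotopy lemma to the Milnor tube $B_\epsilon(\0) \cap F^{-1}(\partial \D_\delta)$, fibered over a small subdisk of $\D^\circ$ via the $t$-projection, produces a fiberwise homeomorphism between the Milnor fibrations of $f_t$ and $f_{t_0}$ for $t$ close to $t_0$; this gives (2)(a). For (2)(b), the hypothesis $s \leq n-2$, combined with the $(n-s-1)$-connectedness of the Milnor fiber from Lecture 1, ensures the Milnor fibers are simply connected, so that Whitehead's theorem promotes a fiberwise homology isomorphism to a genuine homotopy equivalence, which can be carried out fiberwise to yield a fibre-homotopy equivalence of the Milnor fibrations. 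For (2)(c), one applies the $h$-cobordism theorem to the compact Milnor fiber regarded as a cobordism; this requires simple connectivity of the boundary, giving the stronger hypothesis $s \leq n-3$.

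\textbf{Main obstacle.} The most delicate step is the reduction in (1) from "a failure of $a_f$ along the arc $\alpha$'' to "a violation of L\^e number constancy''. One must quantify precisely how the polar cycle $\Gamma^{k+1}_{F,(t,\mbf z)}$ acquires a component containing $\alpha$, show that this component carries a definite contribution to the intersection defining $\Lambda^k_{f_t,\mbf z}$, and then track that contribution through conservation of number as $t$ varies. The Teissier trick is the key mechanism here, since it relates the intersection $(\Gamma^1 \cdot V(f))_\0$ to $(\Gamma^1 \cdot V(\partial f/\partial z_0))_\0$ plus a boundary term in $V(z_0)$, and iterating this in higher dimensions is exactly what converts "tangent plane direction'' into numerical information about L\^e numbers. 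The bookkeeping needed to rule out cancellation between different $k$ is where the technicalities of \cite{lecycles} are concentrated, and in practice this is where one must invest the most care.
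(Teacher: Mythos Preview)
The paper itself does not prove this theorem; Lecture 4 states all of its results with a blanket reference to \cite{lecycles}, so there is no in-paper argument to compare against directly. I will compare your outline to the approach in \cite{lecycles}.

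Your plan for (1) is in the right spirit. The argument in \cite{lecycles} is more direct than your curve-selection route: one shows that constancy of the $\lambda^k_{f_t,\mbf z}(\0)$ forces the relative polar curve $\Gamma^1_{F,(t,\mbf z)}$ of the \emph{total} function $F$, taken with respect to the linear form $t$, to be empty near the origin, and emptiness of this polar curve is known to imply the $a_f$ condition along the $t$-axis. The conservation-of-number mechanism you describe is precisely what forces the polar curve to vanish, so your sketch would arrive at the same place, just less cleanly.

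Your argument for (2), however, has its logic inverted, and this is a genuine gap. You assert that Thom's first isotopy lemma already produces a fiberwise \emph{homeomorphism} of the Milnor fibrations in (2)(a). If that were true, (2)(b) and (2)(c) would follow with no dimension hypothesis at all, since a homeomorphism is stronger than a fibre-homotopy equivalence. What the $a_f$ condition actually buys you is the ability to fix a \emph{single} radius $\epsilon$ (good for $f_{t_0}$) and conclude that the ``big'' fibers $B_\epsilon\cap f_t^{-1}(v)$ form a locally trivial family as $t$ varies. But for $t\neq t_0$ the genuine Milnor fiber of $f_t$ lives in a possibly much smaller ball $B_{\epsilon_t}$, and the substantive step is comparing the small fiber to the big one. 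The region in between is, after some work, an h-cobordism between their boundaries. For (2)(a) one shows the inclusion of the small fiber into the big one is a homology isomorphism; for (2)(b) the hypothesis $s\leq n-2$ makes both fibers simply connected via the $(n-s-1)$-connectivity recalled in Lecture~1, so Whitehead's theorem upgrades this to a homotopy equivalence; for (2)(c) the hypothesis $s\leq n-3$ makes the \emph{boundaries} simply connected as well, so the h-cobordism theorem trivializes the cobordism and yields a diffeomorphism. This is exactly the L\^e--Ramanujam scheme generalized, and you have named the right tools (Whitehead, h-cobordism) but deployed the isotopy argument at the wrong stage.
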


\medskip

\begin{rem} Note that we do not conclude the constancy of the local, ambient topological-type. However, part of what the theorem says is that, if $s \leq n-3$, then at least the homeomorphism-type (in fact, diffeomorphism-type) of the small sphere minus the real link (i.e., the total space of the Milnor fibration) is constant.

It was an open question until 2004-2005 if the constancy of the L\^e numbers implies constancy of the topological-type. In \cite{bobadillale2}, Bobadilla proved this is the case when $s=1$, but, in \cite{bobadillale}, he produced a counterexample when $s=3$. On the other hand, Bobadilla did show that, for general $s$, in addition to the Milnor fibrations being constant, the homotopy-type of the real link in a family with constant L\^e numbers is also constant.
\end{rem}

\medskip

\thmref{thm:massattach} and \thmref{thm:leconst} are the reasons {\bf why} one wants algorithms for calculating L\^e numbers. 

\medskip

\begin{exer} The formulas in this exercise allow one to reduce some problems on a hypersurface to problems on a hypersurface with a singular set of smaller dimension. Suppose that $s\geq 1$.

\begin{enumerate}
\item Near $\0$, $\Sigma(f_{|_{V(z_0)}})=\Sigma f\cap V(z_0)$, and has dimension $s-1$. Let $\tilde{\mbf z}=(z_1, \dots, z_n)$ on $V(z_0)$. Then,
$$
\lambda^0_{f_{|_{V(z_0)}},\tilde{\mbf z}}(\0) \ = \ \gamma^1_{f, \mbf z}(\0) \ + \ \lambda^1_{f, \mbf z}(\0)
$$
and, for all $k\geq 1$,
$$
\lambda^k_{f_{|_{V(z_0)}},\tilde{\mbf z}}(\0) \ = \  \lambda^{k+1}_{f, \mbf z}(\0).
$$
\medskip
\item Suppose that $\gamma^1_{f, \mbf z}(\0)=0 $ or that $j>1+ \big(\lambda^0_{f, \mbf z}(\0)/\gamma^1_{f, \mbf z}(\0)\big)$. 

\smallskip

Then, in a neighborhood of the origin, 
$$\Sigma(f+z_0^j)=\Sigma f\cap V(z_0)\hskip 0.1in\textnormal{ and }\hskip 0.1in \dim_\0\Sigma(f+z_0^j)=s-1.$$
Furthermore, if we let $\tilde{\mbf z}$ denote the ``rotated'' coordinate system 
$$(z_1, z_2, \dots, z_n, z_0),$$ then
$$
\lambda^0_{f+z_0^j, \tilde{\mbf z}}(\0) \ = \ \lambda^0_{f, \mbf z}(\0)  \ + \ (j-1)\lambda^1_{f, \mbf z}(\0)
$$
and, for all $k\geq 1$,
$$
\lambda^k_{f+z_0^j, \tilde{\mbf z}}(\0) \ = \ (j-1)\lambda^{k+1}_{f, \mbf z}(\0).
$$
\end{enumerate}
\end{exer}

\smallskip

\noindent{\rule{1in}{1pt}}

We will now discuss the L\^e numbers from a very different, very sophisticated, point of view. We refer you once again to \cite{lecycles} if you wish to see many results for calculating L\^e numbers.

\bigskip

We want to discuss the relationship between the L\^e numbers and the vanishing cycles as a bounded, constructible complex of sheaves. As serious references to complex of sheaves, we recommend \cite{kashsch} and \cite{dimcasheaves}. Here, we will try to present enough material to give you the flavor of the machinery and results.

\bigskip

Suppose that $f$ has a non-isolated critical point at the origin. Then, at each point in $\Sigma f$, we have a Milnor fiber and a Milnor fibration. The question is: are there restrictions on the topology of the Milnor fiber at $\0$ imposed by the Milnor fibers at nearby points in $\Sigma f$?

This is a question of how local data patches together to give global data, which is exactly what sheaves encode. So it is easy to believe that sheaves would be useful in describing the situation. The stalk of a sheaf at a point describes the sheaf, in some sense, at the specified point. In our setting, at a point $\mbf p\in\Sigma f$, we would like for the stalk at $\mbf p$ to give us the cohomology of $F_{f, \mbf p}$. This means that, after we take a stalk, we need a chain complex. Thus, one is lead to consider complexes of sheaves, where the stalks of the individual sheaves are simply $\Z$-modules (we could use other base rings). Note that we are absolutely {\bf not} looking at coherent sheaves of modules over the structure sheaf of an analytic space. We should mention now that, for the most elegant presentation, it is essential that we allow our complex to have non-zero terms in {\bf negative} degrees.

As a quick example, on a complex space $X$, one important complex of sheaves is simply referred to as the {\it constant sheaf}, $\Z^\bullet_X$. This complex of sheaves is the constant sheaf $\Z_X$ in degree $0$ and $0$ in all other degrees. Frequently, we will {\it shift} this complex; the complex of sheaves $\Z^\bullet_X[k]$ is the complex which has $\Z_X$ in degree $-k$ (note the negation) and $0$ in all other degrees.

\smallskip

Suppose that we have a complex of sheaves (of $\Z$-modules) $\Adot$ on a space $X$. There are three cohomologies associated to $\Adot$:

\begin{enumerate}
\item The {\it sheaf cohomology}, that is, the cohomology of the complex: $\mbf H^k(\Adot)$.
\medskip

\item The {\it stalk cohomology} at each point $\mbf p\in X$: $H^k(\Adot)_{\mbf p}$. This is obtained by either taking stalks first and then cohomology of the resulting complex of $Z$-modules or by first taking the cohomology sheaves and then taking their stalks.
\medskip

\item The {\it hypercohomology} of $X$ with coefficients in $\Adot$: $\hyp^k(X; \Adot)$. This is a generalization of sheaf cohomology of the space. As an example, $\hyp^k(X; \Z^\bullet_X)$ simply yields the usual integral cohomology of $X$ in degree $k$.
\end{enumerate}

For all of these cohomologies, the convention on shifting tells us that the index is added to the shift to produce the degree of the unshifted cohomology, e.g., $\mbf H^k(\Adot[m])\cong \mbf H^{k+m}(\Adot)$.

It is standard that, for a subspace $Y\subseteq X$, one writes $\hyp^k(Y; \Adot)$ for the hypercohomology of $Y$ with coefficients in the restriction of $\Adot$ to $Y$; the point being that the restriction of the complex is usually not explicitly written in this context.

Our complexes of sheaves will be {\it constructible}. One thing that this implies is that restriction induces an isomorphism between the hypercohomology in a small ball and the stalk cohomology at the center of the ball, i.e., for all $\mbf p\in X$, there exists $\epsilon>0$ such that, for all $k$,
$$
\hyp^k(B^\circ_\epsilon(\mbf p); \Adot) \ \cong \ H^k(\Adot)_{\mbf p}.
$$

If $\W$ is an open subset of $X$ (and in other cases), the hypercohomology $\hyp^k(X, \W; \Adot)$ of the pair $(X, \W)$ is defined, and one has the expected long exact sequence
$$
\dots\rightarrow \hyp^{k-1}(\W; \Adot)\rightarrow\hyp^{k}(X,\W; \Adot)\rightarrow\hyp^{k}(X; \Adot)\rightarrow\hyp^{k}(\W; \Adot)\rightarrow\dots  \ .
$$

\smallskip

The {\it $k$-th support of $\Adot$}, $\operatorname{supp}^k\Adot$, is defined to be the closure of the set of points where the stalk cohomology in degree $k$ is not zero, i.e., 
$$
\operatorname{supp}^k\Adot \ := \ \overline{\{\mbf p\in X \ | \ H^k(\Adot)_{\mbf p}\neq 0\}}.
$$

The {\it support of $\Adot$}, $\operatorname{supp}\Adot$, is given by
$$
\operatorname{supp}\Adot \ := \bigcup_k \operatorname{supp}^k\Adot \ = \ \overline{\{\mbf p\in X \ | \ H^*(\Adot)_{\mbf p}\neq 0\}}.
$$
(We used above that $\Adot$ is bounded, so that the union is finite.)

The {\it $k$-cosupport of $\Adot$}, $\operatorname{cosupp}^k\Adot$, is defined to be
$$
\operatorname{cosupp}^k\Adot \ := \ \overline{\{\mbf p\in X \ | \ \hyp^k\big(B^\circ_\epsilon(\mbf p), B^\circ_\epsilon(\mbf p)-\{\mbf p\}; \ \Adot\big)\}}.
$$
\medskip

A {\bf perverse sheaf} (using middle perversity) $\Pdot$ is a bounded, constructible complex of sheaves which satisfies two conditions: the {\it support} and {\it cosupport} conditions, as given below. We remark that we set the dimension of the empty set to be $-\infty$, so that saying that a set has negative dimension means that the set is empty.

The support and cosupport conditions for a perverse sheaf are that, for all $k$,
\begin{enumerate} 
\item (support condition) $\dim\operatorname{supp}^k\Adot \ \leq -k$;

\medskip
\item (cosupport condition) $\dim\operatorname{cosupp}^k\Adot \ \leq k$.
\end{enumerate}
Note that the stalk cohomology is required to be $0$ in positive degrees. 

If $s:=\dim\supp\Pdot$, then the stalk cohomology is possibly non-zero only in degrees $k$ where $-s\leq k\leq 0$. In particular, if $\mbf p$ is an isolated point in the support of $\Pdot$, then the stalk cohomology of $\Pdot$ at $\mbf p$ is concentrated in degree $0$. Furthermore, the cosupport condition tells us that, for all $\mbf p\in X$,  for negative degrees $k$,
$$\hyp^k\big(B^\circ_\epsilon(\mbf p), B^\circ_\epsilon(\mbf p)-\{\mbf p\}; \ \Pdot\big)=0.$$

Note that the restriction of a perverse sheaf to its support remains perverse.

\medskip

\begin{exer} Suppose that $M$ is a complex manifold of pure dimension $d$. Show that the shifted constant sheaf $\Z_M^\bullet[d]$ is perverse.
\end{exer}

\medskip

What does any of this have to do with the Milnor fiber and L\^e numbers?

\medskip

Given an analytic $f:(\U, \0)\rightarrow (\C, 0)$, there are two functors, $\psi_f[-1]$ and $\phi_f[-1]$, from the category of perverse sheaves on $\U$ to the category of perverse sheaves on $V(f)$ called the (shifted) {\bf nearby cycles along $f$} and the {\bf vanishing cycles along $f$}, respectively. These functors encode, on a chain level, the \hbox{(hyper-})cohomology of the Milnor fiber and ``reduced'' (hyper-)cohomology of the Milnor fiber, and how they patch together. For all $\mbf p\in V(f)$, the stalk cohomology is what we want:
$$
H^k\left(\psi_f[-1]\Pdot\right)_\mbf p \ \cong \hyp^{k-1}(F_{f, \mbf p};\Pdot)
$$
and
$$
H^k\left(\phi_f[-1]\Pdot\right)_\mbf p \ \cong \hyp^{k}(B^\circ_\epsilon(\mbf p), F_{f, \mbf p};\Pdot).
$$

\medskip

\begin{exer} Recall from the previous exercise that $\Z_\U^\bullet[n+1]$ is perverse. As $\phi_f[-1]$ takes perverse sheaves to perverse sheaves, it follows that the complex of sheaves $\phi_f[-1]\Z_\U^\bullet[n+1]$ is a perverse sheaf on $V(f)$.
\begin{enumerate}
\item Show that, for all $\mbf p\in V(f)$, for all $k$, 
$$H^k(\phi_f[-1]\Z_\U^\bullet[n+1])_{\mbf p}\cong \widetilde H^{n+k}(F_{f, \mbf p}; \Z),$$
 and so $\supp\phi_f[-1]\Z_\U^\bullet[n+1] =V(f)\cap \Sigma f$.

\medskip

\item Explain why this tells you that $\widetilde H^j(F_{f, \0};\Z)$ is possibly non-zero only for $n-s\leq j\leq n$.
\end{enumerate}
\end{exer}

\medskip

\begin{exm} In this example, we wish to show some of the power of perverse techniques.

\smallskip

Suppose that $s=1$, so that $\Sigma f$ is a curve at the origin. Let $C_i$ denote the irreducible components of $\Sigma f$ at $\0$. Then, $\Lambda^1_{f, \mbf z}=\sum_i \mu_i^\circ[C_i]$, where $\mu_i^\circ$ is the Milnor number of $f$ restricted to a transverse hyperplane slice to $C_i$ at any point on $C_i-\{\0\}$ near $\0$. Furthermore, in a neighborhood of $\0$, $C_i-\{\0\}$ is a punctured disk, and there is an internal monodromy action $h_i:\Z^{\mu_i^\circ}\rightarrow \Z^{\mu_i^\circ}$ induced by following the Milnor fiber once around this puncture.

Let $\Pdot:=\left(\phi_f[-1]\Z_\U^\bullet[n+1]\right)_{|_{\Sigma f}}$. This is a perverse sheaf on $\Sigma f$. We are going to look at the long exact hypercohomology sequence of the pair 
$$\big(B_\epsilon\cap \Sigma f, \ (B_\epsilon-\{\0\})\cap\Sigma f\big),
$$
for small $\epsilon$. For convenience of notation, we will assume that we are working in a small enough ball around the origin, and replace $B_\epsilon\cap \Sigma f$ with simply $\Sigma f$, and $(B_\epsilon-\{\0\})\cap\Sigma f$ with $\Sigma f-\{\0\}=\bigcup_i (C_i-\{\0\})$.

Because we are working in a small ball, 
$$\hyp^k(\Sigma f; \Pdot) \ \cong \  H^k(\Pdot)_\0 \ \cong \ \widetilde H^{n+k}(F_{f, \0}; \Z).
$$

Furthermore, $\Pdot$ restricted to $C_i-\{\0\}$ is a local system (a locally constant sheaf), shifted into degree $-1$ with stalk cohomology $\Z^{\mu_i^\circ}$. Hence, $$\hyp^{-1}(C_i-\{\0\}; \Pdot)\cong \operatorname{ker}\{\operatorname{id}-h_i\},$$ and
$$\hyp^{-1}\big(\bigcup_i (C_i-\{\0\}); \Pdot\big) \ \cong \  \bigoplus_i \hyp^{-1}((C_i-\{\0\}); \Pdot) \ \cong \ \bigoplus_i\operatorname{ker}\{\operatorname{id}-h_i\}.
$$
Finally note that the cosupport condition tells us that $\hyp^{-1}(\Sigma f, \Sigma f-\{\0\};\Pdot)=0$. 

Thus, the portion of the long exact sequence on hypercohomology 
$$
 \hyp^{-1}(\Sigma f, \Sigma f-\{\0\};\Pdot)\rightarrow \hyp^{-1}(\Sigma f;\Pdot)\rightarrow\hyp^{-1}(\Sigma f-\{\0\};\Pdot)\rightarrow
$$
becomes
$$
0\rightarrow \widetilde H^{n-1}(F_{f, \0}; \Z)\rightarrow \bigoplus_i\operatorname{ker}\{\operatorname{id}-h_i\}\rightarrow.
$$
Even without knowing the $h_i$'s, this tells us that the rank of $\widetilde H^{n-1}(F_{f, \0}; \Z)$ is at most $\sum_i\mu_i^\circ$.

Note that, if all of the components of $\Sigma f$ are smooth, then $\lambda^1_{f, \mbf z}(\0)=\sum_i\mu_i^\circ$, and this bound agrees with what we obtain from \thmref{thm:massattach}, but -- if one of the $C_i$'s is singular at $\0$ -- then this example produces a better bound.

\smallskip

This result is quite complicated to prove without using perverse sheaves; see \cite{siersmavarlad}.
\end{exm}

\smallskip

\noindent{\rule{1in}{1pt}}

We have yet to tell you what the vanishing cycles have to do with the L\^e numbers.

\smallskip

Suppose that $X$ is a complex analytic subspace of $\U$, that $\Pdot$ is a perverse sheaf on $X$, and that $\mbf p\in X$. For a generic affine linear form $L$ such that $L(\mbf p)=0$, the point $\mbf p$ is an isolated point in the support of $\phi_L[-1]\Pdot$; for instance, $L$ can be chosen so that $V(L)$ is transverse to all of the strata -- near $\mbf p$ but not at $\mbf p$ -- of a Whitney stratification with respect to which $\Pdot$ is constructible. For such an $L$, since $\mbf p$ is an isolated point in the support of the perverse sheaf $\phi_L[-1]\Pdot$, the stalk cohomology of $\phi_L[-1]\Pdot$ at $\mbf p$ is concentrated in degree $0$.

Our coordinates $(z_0, z_1, \dots, z_n)$ have been chosen so that all of the iterated vanishing and nearby cycles below have $\0$ as an isolated point in their support.

As we showed in \cite{numinvar}, the connection with the L\^e numbers is:
$$
\lambda^0_{f, \mbf z}(\0) \ = \ \rank H^0\big(\phi_{z_0}[-1]\phi_f[-1]\Z_\U^\bullet[n+1]\big),
$$
$$
\lambda^1_{f, \mbf z}(\0) \ = \ \rank H^0\big(\phi_{z_1}[-1]\psi_{z_0}[-1]\phi_f[-1]\Z_\U^\bullet[n+1]\big),
$$
$$\vdots$$
$$
\lambda^s_{f, \mbf z}(\0) \ = \ \rank H^0\big(\phi_{z_s}[-1]\psi_{z_{s-1}}[-1]\dots\psi_{z_{0}}[-1]\phi_f[-1]\Z_\U^\bullet[n+1]\big).
$$
The chain complex in \thmref{thm:massattach} can be derived as a formal consequence of the equalities above.

\section{\bf Appendix} 

\noindent{\bf Curve Selection Lemma}:

\smallskip

The following lemma is an extremely useful tool; see \cite{milnorsing}, \S3 and \cite{looibook}, \S2.1. The complex analytic statement uses Lemma 3.3 of \cite{milnorsing}. Below, ${\stackrel{\circ}{\D}}_{{}_\delta}$ denotes an open disk of radius $\delta>0$ centered at the origin in $\C$.

\begin{lem}\label{lem:curveselection} {\rm (Curve Selection Lemma)} Let $\mbf p$ be a point in a real analytic manifold $M$. Let $Z$ be a semianalytic subset of $M$ such that $\mbf p\in\overline{Z}$. Then, there exists a real analytic curve $\gamma: [0, \delta)\rightarrow M$ with $\gamma(0)=\mbf p$ and $\gamma(t)\in Z$ for $t\in (0, \delta)$.

Let $\mbf p$ be a point in a complex analytic manifold $M$. Let $Z$ be a constructible subset of $M$ such that $\mbf p\in\overline{Z}$. Then, there exists a complex analytic curve $\gamma: {\stackrel{\circ}{\D}}_{{}_\delta}\rightarrow M$ with $\gamma(0)=\mbf p$ and $\gamma(t)\in Z$ for $t\in {\stackrel{\circ}{\D}}_{{}_\delta}-\{\0\}$.
\end{lem}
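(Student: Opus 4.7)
Both statements are local around $\mbf p$, so pick a chart and reduce to the case $M \subseteq \R^n$ (resp.\ $M \subseteq \C^n$) open with $\mbf p = \0$. Apply the structure theorem for semianalytic sets (resp.\ the definition of constructible sets) to write $Z$, in a neighborhood of $\0$, as a finite union of \emph{basic} pieces
$$
Z_\alpha \ = \ \bigl\{\mbf x : f_{\alpha,1}(\mbf x) = \cdots = f_{\alpha,k_\alpha}(\mbf x) = 0, \ g_{\alpha,1}(\mbf x) > 0, \ldots, g_{\alpha,l_\alpha}(\mbf x) > 0\bigr\}
$$
in the real analytic case, with the $f_{\alpha,i}, g_{\alpha,j}$ real analytic; in the complex case, the pieces are locally closed analytic subvarieties $A_\alpha - B_\alpha$ with $B_\alpha \subseteq A_\alpha$ analytic. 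Since $\0 \in \overline Z$, at least one index satisfies $\0 \in \overline{Z_\alpha}$, and it suffices to construct the arc into this single basic piece.

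\textbf{Key steps.} Let $V := V(f_{\alpha,1}, \ldots, f_{\alpha,k_\alpha})$ (resp.\ $V := A_\alpha$), an analytic subvariety containing $Z_\alpha$ and hence $\0$. Apply Hironaka's resolution of singularities, in the appropriate category, to obtain a proper analytic map $\pi : \tilde V \to V$ with $\tilde V$ a smooth analytic manifold and $\pi$ an isomorphism over the smooth locus of $V$. Because $\pi$ is proper and $\0 \in \overline{Z_\alpha}$, some point $\tilde \0 \in \pi^{-1}(\0)$ lies in the closure of the lift $\tilde Z := \pi^{-1}(Z_\alpha \setminus \operatorname{Sing} V)$. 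Now work in local analytic coordinates on $\tilde V$ at $\tilde \0$: the set $\tilde Z$ is cut out by the strict inequalities $\pi^* g_{\alpha,j} > 0$ (in the real case) or by the non-vanishing of pullbacks of the defining functions of $B_\alpha$ (in the complex case), and these are analytic near $\tilde\0$. A power-series / Puiseux-type argument — for instance, a straight line in coordinates along a direction in the (non-empty) tangent cone at $\tilde\0$ to the half-open analytic set $\tilde Z$ — produces an analytic arc $\tilde\gamma : [0,\delta) \to \tilde V$ (resp.\ $\tilde\gamma:\stackrel{\circ}{\D}_\delta \to \tilde V$) with $\tilde\gamma(0) = \tilde\0$ and $\tilde\gamma(t) \in \tilde Z$ for $t \neq 0$. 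The composition $\gamma := \pi \circ \tilde\gamma$ is then the desired analytic arc into $V \supseteq Z_\alpha$.

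\textbf{Complex case.} The argument runs verbatim, replacing real Hironaka by complex Hironaka, the half-interval $[0,\delta)$ by the disk $\stackrel{\circ}{\D}_\delta$, and ``strict inequalities $g_j > 0$'' by ``non-vanishing of analytic functions.'' The complex statement is in fact somewhat cleaner, because one only needs to avoid a proper analytic subvariety of $V$ rather than honor inequalities.

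\textbf{Main obstacle.} The delicate point is producing an \emph{analytic} arc, as opposed to a mere continuous curve or a sequence; on the singular variety $V$ itself, analytic arcs at the singular point $\0$ can be obstructed in subtle ways. Passing to the resolution $\tilde V$ is what makes this feasible, because the ambient space is now smooth and one has convergent power-series expansions in local coordinates. The properness of $\pi$ is the other essential ingredient: without it, one could not conclude that some lift of $\0$ actually lies in $\overline{\tilde Z}$. Once these two features are in place, writing down the arc on $\tilde V$ is an elementary exercise in power-series manipulation, but without them the analyticity conclusion would fail.
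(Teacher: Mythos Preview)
The paper does not actually prove the Curve Selection Lemma; it merely states it and refers the reader to Milnor's book (\S3) and Looijenga's book (\S2.1), noting that the complex statement follows from Milnor's Lemma~3.3. So there is no ``paper's own proof'' to compare against, only your attempt to judge on its own.

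Your complex case is essentially sound: after resolving $A_\alpha$, the lift $\tilde Z$ is the complement of a proper analytic subvariety in the smooth $\tilde V$, and a generic complex line through $\tilde\0$ meets that subvariety only discretely, so a small punctured disk lands in $\tilde Z$. Fine.

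The real case, however, has a genuine gap. You resolve only the \emph{equality} locus $V=V(f_{\alpha,1},\dots,f_{\alpha,k_\alpha})$, and then on the smooth $\tilde V$ you are left with a set $\tilde Z$ cut out by strict analytic inequalities $\pi^*g_{\alpha,j}>0$. You claim that ``a straight line in coordinates along a direction in the tangent cone'' to $\tilde Z$ gives the arc. This is false in general. Take the simplest instance: $\tilde V=\R^2$, $\tilde Z=\{y>x^2\}$, $\tilde\0=(0,0)$. The direction $(1,0)$ lies in the tangent cone of $\tilde Z$ at the origin (it is the limit of secant directions to points like $(1/n,2/n^2)$), yet the ray $t\mapsto(t,0)$ has $y-x^2=-t^2<0$ and never enters $\tilde Z$. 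So ``straight line in a tangent direction'' does not suffice; you have essentially reduced the Curve Selection Lemma on $M$ to the Curve Selection Lemma on $\tilde V$, which is the same problem.

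What is missing is a stronger normalization of the inequality functions, not just of the equality locus. The resolution-based proofs that do work (Hironaka, \L ojasiewicz) monomialize \emph{all} the $g_{\alpha,j}$ simultaneously---this is Hironaka's local rectilinearization/uniformization of semianalytic sets---so that near $\tilde\0$ each $\pi^*g_{\alpha,j}$ is a monomial times a unit, $\tilde Z$ becomes a finite union of coordinate ``orthants,'' and then a coordinate-axis ray visibly works. Alternatively, Milnor's original argument avoids resolution entirely, working by induction on dimension via an auxiliary real-algebraic set. Either way, the step you flagged as ``an elementary exercise in power-series manipulation'' is precisely where the real content of the lemma lives.
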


\smallskip

\noindent{\rule{1in}{1pt}}

\noindent{\bf Cones and joins}:

\smallskip

Recall that the abstract cone on a topological space $Y$ is the identification space
$$c(Y):=\frac{Y\times[0,1]}{Y\times\{1\}},$$
where the point (the equivalence class of) $Y\times\{1\}$ is referred to as the {\it cone point}. Then define the open cone to be $c^\circ(Y):=c(Y)\backslash\big(Y\times \{0\}\big)$.

If $Z\subseteq Y$, then $c(Z)\subseteq c(Y)$, where we consider the two cone points to be the same, and denote this common cone point simply by $*$. 

We define the cones on the pair $(Y, Z)$ to be triples, which include the cone point:
$$
c(Y,Z) \ := \ (c(Y),\, c(Z),\, *)\hskip 0.2in \textnormal{and}\hskip 0.2in c^\circ(Y,Z) \ := \ (c^\circ(Y),\, c^\circ(Z),\, *).
$$

\medskip

The {\it join} $X*Y$ of topological spaces $X$ and $Y$ is the space $X\times Y\times [0,1]$, where at one end the interval ``$X$ is crushed to a point'' and at the other end of the interval ``$Y$ is crushed to a point''. This means that we take the identification space obtained from $X\times Y\times [0,1]$ by identifying $(x_1, y, 0)\sim (x_2, y, 0)$ for all $x_1, x_2\in X$ and $y\in Y$, and also identifying $(x, y_1, 1)\sim (x, y_2, 1)$ for all $x\in x$ and $y_1, y_2\in Y$.

The join of a point with a space $X$ is just the cone on $X$. The join of the $0$-sphere (two discrete points) with $X$ is the suspension of $X$.

\smallskip

\noindent{\rule{1in}{1pt}}

\noindent{\bf Locally trivial fibrations}:

\smallskip

Suppose that $X$ and $Y$ are smooth manifolds, where $X$ may have boundary, but $Y$ does not. A smooth map $g:X\rightarrow Y$ is a {\it smooth trivial fibration} if and only if there exists a smooth manifold $F$, possibly with boundary, and a diffeomorphism $\alpha:F\times Y\rightarrow X$  such that the following diagram commutes:

$$
\begin{tikzcd}
F\times Y \arrow{r}{\alpha}[swap]{\cong} \arrow[swap]{dr}{\pi}  &X \arrow{d}{g} \\
& Y,
\end{tikzcd}
$$
where $\pi$ denotes projection.

A smooth map $g:X\rightarrow Y$ is a {\it smooth locally trivial fibration} if and only if, for all $y\in Y$, there exists an open neighborhood $B$ of $y$ in $Y$ such that the restriction $g_{|_{g^{-1}(B)}}: g^{-1}(B)\rightarrow B$ is a trivial fibration.

If the base space $Y$ is connected and $g:X\rightarrow Y$ is a smooth locally trivial fibration, it is an easy exercise to show that the diffeomorphism-type of the fibers $g^{-1}(y)$ is independent of $y\in Y$; in this case, any fiber is referred to as simply {\it the fiber of the fibration}.

\medskip

It is a theorem that a locally trivial fibration over a contractible base space is, in fact, a trivial fibration.

\medskip

Now we state the theorem of Ehresmann \cite{ehresmann}, which yields a common for method for concluding that a map is a locally trivial fibration.

\begin{thm}\label{thm:ehresmann} Suppose that $N$ is a smooth manifold, possibly with boundary,  $P$ is a smooth manifold, and $f: N\rightarrow P$  is a proper submersion. Then, $f$ it is a smooth, locally trivial fibration.
\end{thm}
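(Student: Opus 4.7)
The plan is to establish local triviality around each point $q\in P$. Since local triviality is a local property on the base, after passing to a coordinate chart I may assume $P$ is an open cube $B\subseteq\R^k$ centered at $q=\0$, with $k=\dim P$, and the goal reduces to producing a diffeomorphism $\Phi\colon F\times B\to f^{-1}(B)$ commuting with projection to $B$, where $F:=f^{-1}(\0)$.

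The key device is to lift the coordinate vector fields $\partial/\partial y_1,\dots,\partial/\partial y_k$ on $B$ to vector fields $X_1,\dots,X_k$ on $f^{-1}(B)$ satisfying $df(X_i)=\partial/\partial y_i$. Because $f$ is a submersion, every point of $N$ has a neighborhood in which $f$ is, in suitable coordinates, a coordinate projection, where the lift is immediate. Using a partition of unity on $f^{-1}(B)$ subordinate to a cover by such submersion charts, I would paste these local lifts together. If $\partial N\neq\emptyset$, an additional step is required to ensure the $X_i$ are tangent to $\partial N$: first build the lift inside $\partial N$ (using that $f|_{\partial N}$ is again a submersion, which is the substantive content of ``$f$ is a submersion'' in the boundary setting), then extend through a collar and outward to the full preimage.

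With the $X_i$ in hand, define
\[
\Phi(x,(t_1,\dots,t_k)) \ := \ \varphi^{X_k}_{t_k}\circ\cdots\circ\varphi^{X_1}_{t_1}(x),
\]
where $\varphi^{X_i}_{t}$ denotes the time-$t$ flow of $X_i$. Because $df(X_i)=\partial/\partial y_i$, flowing along $X_i$ shifts $y_i$ by $t$ and preserves the other coordinates, so $f\circ\Phi(x,(t_1,\dots,t_k))=(t_1,\dots,t_k)$ and $\Phi$ commutes with projection to $B$. Its inverse is obtained by reversing the flows in the opposite order, so $\Phi$ is a diffeomorphism wherever it is defined.

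The main obstacle, and the place where properness is essential, is to show the flow $\varphi^{X_i}_t$ is defined whenever the projected trajectory stays in $B$, so that $\Phi$ is actually defined on all of $F\times B$. An integral curve of $X_i$ can fail to be extendable to a given time $T$ only by escaping every compact subset of $N$ before time $T$. But its image under $f$ is an integral curve of $\partial/\partial y_i$, which lies in a compact set $K\subseteq B$ for $s\in[0,T]$; hence by properness the curve itself lies in the compact set $f^{-1}(K)$, so a convergent subsequence exists as $s\to T$ and the flow can be extended past $T$, a contradiction. Thus $\Phi$ is defined on $F\times B$, is a diffeomorphism, and realizes $f$ as locally trivial near $q$.
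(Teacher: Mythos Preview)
The paper does not supply a proof of this theorem; it is merely stated in the Appendix as a classical background result and attributed to Ehresmann. Your argument is the standard proof and is correct. Two small remarks. First, you rightly flag that in the boundary case one must know $f_{|\partial N}$ is itself a submersion so that the lifts can be taken tangent to $\partial N$; this is indeed an extra hypothesis, not a consequence of $df$ being surjective at boundary points, and the paper's statement tacitly assumes it. Second, for the properness step to guarantee that $\Phi$ is defined on all of $F\times B$, you need the entire broken path $(0,\dots,0)\to(t_1,0,\dots,0)\to\cdots\to(t_1,\dots,t_k)$ to remain in a compact subset of $B$, not just its endpoint; since you chose $B$ to be a cube (hence convex), this is automatic.
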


Locally trivial fibrations over a circle are particularly easy to characterize. Begin with a smooth fiber $F$ and a diffeomorphism $\tau:F\rightarrow F$, called a {\it characteristic diffeomorphism}. Then there is a smooth locally trivial fibration 
$$
p: \frac{F\times[0,1]}{(x, 0)\sim (\tau(x),1)}\rightarrow S^1\subseteq\C
$$
given by $p([x,t]):= e^{2\pi it}$. A characteristic diffeomorphism describes how the fiber is ``glued'' to itself as one travels counterclockwise once around the base circle.

Every smooth locally trivial fibration $g$ over a circle is diffeomorphic to one obtained as above, but the  characteristic diffeomorphism $\tau$ is {\bf not} uniquely determined by $g$; however, the maps on the homology and cohomology of $F$ induced by $\tau$ {\bf are} independent of the choice of $\tau$. These induced maps are called the {\it monodromy automorphisms} of $g$. Thus, for each degree $i$, there are  well-defined monodromy maps $T_i: H_i(F;\Z)\rightarrow H_i(F;\Z)$ and $T^i: H^i(F;\Z)\rightarrow H^i(F;\Z)$.

\newpage
\bibliographystyle{plain}
\bibliography{Masseybib}
\end{document}